\newtheorem{theorem}{Theorem}[section]
\newtheorem{lemma}[theorem]{Lemma}
\newtheorem{proposition}[theorem]{Proposition}
\newtheorem{corollary}[theorem]{Corollary}
\theoremstyle{definition}
\newtheorem{definition}[theorem]{Definition}
\newtheorem{notation}[theorem]{Notation}
\newtheorem{example}[theorem]{Example}
\theoremstyle{remark}
\newtheorem{remark}[theorem]{Remark}
\newcommand\@dotsep{4.5}
\def\@tocline#1#2#3#4#5#6#7{\relax
  \ifnum #1>\c@tocdepth 
  \else
    \par \addpenalty\@secpenalty\addvspace{#2}%
    \begingroup \hyphenpenalty\@M
    \@ifempty{#4}{%
      \@tempdima\csname r@tocindent\number#1\endcsname\relax
    }{%
      \@tempdima#4\relax
    }%
    \parindent\z@ \leftskip#3\relax \advance\leftskip\@tempdima\relax
    \rightskip\@pnumwidth plus1em \parfillskip-\@pnumwidth
    #5\leavevmode\hskip-\@tempdima{#6}\nobreak
    \leaders\hbox{$\m@th\mkern \@dotsep mu\hbox{.}\mkern \@dotsep mu$}\hfill
    \nobreak
    \hbox to\@pnumwidth{\@tocpagenum{\ifnum#1=1\fi#7}}\par
    \nobreak
    \endgroup
  \fi}
\renewcommand\csname r@tocindent0\endcsname{0pt}
\def\l@subsection{\@tocline{2}{0pt}{2.5pc}{5pc}{}}
\newcommand{\D}{\mathcal{D}}
\newcommand{\ba}{\mathbf{a}}
\newcommand{\bv}{\mathbf{v}}
\newcommand{\bw}{\mathbf{w}}
\newcommand{\bx}{\mathbf{x}}
\newcommand{\by}{\mathbf{y}}
\newcommand{\bz}{\mathbf{z}}
\newcommand{\bg}{\mathbf{g}}
\newcommand{\Ds}{\mathcal{D}_{\supp}}
\newcommand{\dual}[1]{{#1}^{\ast}}
\newcommand{\tcs}[1]{\tabbedCenterstack{#1}}
\newcommand{\pspan}[1]{\mathrm{Span}_{\geqslant 0}\left\{#1\right\}} 
\newcommand{\lspan}[1]{\mathrm{Span}\left\{#1\right\}} 
\newcommand{\ip}[1]{\langle #1 \rangle} 
\newcommand{\sbm}[1]{{\let\amp=&\left(\begin{smallmatrix}#1\end{smallmatrix}\right)}}
\newcommand{\thin}[1]{T(#1)} 
\newcommand{\homa}{\mathrm{Hom}_{A}}
\DeclareMathOperator{\mc}{mod}
\DeclareMathOperator{\supp}{supp}
\DeclareMathOperator{\codim}{codim}
\DeclareMathOperator{\dimu}{\underline{dim}}
\DeclareMathOperator{\tp}{top}
\DeclareMathOperator{\soc}{soc}
\DeclareMathOperator{\brick}{brick}
\DeclareMathOperator{\simp}{simp}
\newcommand{\g}[1]{\bg^{#1}} 
\renewcommand{\emptyset}{\varnothing}
\renewcommand{\leq}{\leqslant} 
\renewcommand{\geq}{\geqslant} 
\newcommand{\sib}[1]
{{\textcolor{orange}{ #1}}}
\newcommand{\yad}[1]
{{\textcolor{violet}{#1}}}
\newcommand{\hip}[1]
{\textcolor{blue}{ #1}}
\newcommand{\nick}[1]
{\textcolor{red}{ #1}}
\newcommand{\aran}[1]
{\textcolor{black!30!green}{ #1}}
\title{Stability spaces of string and band modules}
\author[Schroll]{Sibylle Schroll}
\address{Abteilung Mathematik, Department Mathematik/Informatik der Universit\"at zu K\"oln, Weyertal 86-90, 50931 Cologne, Germany}
\email{schroll@math.uni-koeln.de}
\author[Tattar]{Aran Tattar}
\address{Abteilung Mathematik, Department Mathematik/Informatik der Universit\"at zu K\"oln, Weyertal 86-90, 50931 Cologne, Germany}
\email{atattar@uni-koeln.de}
\author[Treffinger]{Hipolito Treffinger}
\address{Universi\'e de Paris, B\^atiment Sophie Germain, 5 rue Thomas Mann, 75205 Paris Cedex 13, France}
\email{treffinger@imj-prg.fr}
\author[Valdivieso]{Yadira Valdivieso}
\address{Department of Mathematics, Universidad de las Am\'ericas Puebla, Mexico}
\email{yadira.valdivieso@udlap.mx}
\author[Williams]{Nicholas J. Williams}
\address{Graduate School of Mathematical Sciences, The University of Tokyo, 3-8-1 Komaba, Meguro-ku, Tokyo 153-8914, Japan}
\email{williams@ms.u-tokyo.ac.jp}
\begin{document}

\begin{abstract}
The stability space of a module is the cone of vectors which make the module semistable. These cones are defined in terms of inequalities; in this paper we draw insights from considering the dual description in terms of non-negative linear spans. We show how stability spaces of thin modules are related to order polytopes. In the case of non-thin modules, we show how the stability spaces of string and band modules are related to the stability spaces of the thin modules corresponding to the abstract string and band. We use this to analyse the way in which the stability space of a band module is the limit of stability spaces of string modules. Namely, the stability space of the band module is a union of cones, each of which is the limit of the stability spaces of a family of string modules.
\end{abstract}


\maketitle

\tableofcontents

\section{Introduction}

Stability conditions are a classical concept in algebraic geometry used to construct moduli varieties \cite{mumford}. In the 90s they were introduced for representations of  finite dimensional algebras \cite{Scho91,King94} and, more generally, for abelian categories \cite{Ru97}.

For a finite-dimensional algebra $A$, the stability conditions over $\mc A$ induce a partition of $\mathbb{R}^{n}$ \cite{King94}, known as the \emph{wall-and-chamber} structure.
The wall-and-chamber structure forms one component of a scattering diagram associated to the finite-dimensional algebra in \cite{BridgelandScat}---the other being a group whose elements correspond to the walls, which induce an action known as \emph{wall-crossing}.
Scattering diagrams were originally introduced in algebraic geometry \cite{GrossSiebert, KontsevichSoibelman} to study wall-crossing phenomena in Donaldson--Thomas theory.
They were subsequently applied with remarkable effect in \cite{GHKK} to solve several open problems in the theory of cluster algebras.

The theory of cluster algebras itself has had a sizeable impact on the representation theory of algebras via categorifications of cluster algebras, using such concepts as cluster categories \cite{CCS, BMRRT, Amiot} and quivers with potential \cite{DWZ}.
These categorifications shed light on previously-undescribed homological properties of categories of modules over algebras, and led to the introduction of $\tau$-tilting theory \cite{AIR14}.
In turn, it has been shown that the wall-and-chamber structure of an algebra is governed by the $\tau$-tilting theory of the algebra \cite{BST19, Asai}.

In this paper we are interested only in the wall-and-chamber structures of algebras, rather than the full scattering diagrams---meaning that we ignore the group elements associated to walls. The wall-and-chamber structure of an algebra $A$ is built using the notion of stability conditions introduced in \cite{King94} as follows (see Section~\ref{sec:background} for more details).
For each $A$-module $M$ the \textit{stability space} $\D(M)$ is the set of vectors $\bv$ in $\mathbb{R}^n$ making the module $M$ $\bv$-semistable. 
The union of all of the stability spaces of non-zero objects in $\mc A$ forms the wall-and-chamber structure of $A$. Here the \emph{walls} are the stability spaces of codimension one, while the \emph{chambers} correspond to the open connected subsets of the complement of the closure of the walls.

It follows from the definition of stability conditions that $\D(M)$ is the intersection of finitely many half-spaces; in other words, $\D(M)$ is a \emph{polyhedral cone}. 
The Minkowski--Weyl Theorem, a central result in Linear Programming, states that any cone which is a finite intersection of half-spaces is a non-negative linear span of a finite set of vectors. 
We investigate stability spaces $\D(M)$ from this dual point of view. We begin by considering the case where $M$ is a thin module, that is, where the composition factors of $M$ are pairwise non-isomorphic. We obtain the following description of the stability space of a thin module using the description of the facial structure of a stability space from \cite{Asai}.

\begin{proposition}[Proposition~\ref{prop:stability space of thin modules}]\label{prop:intro:thin}
Let $M$ be a thin sincere representation of an acyclic quiver $Q$. 
A minimal generating set of $\D(M)$ is given by \[\pspan{e_{i} - e_{j} \mid i \to j \mbox{ is an arrow of }Q},\] where $\pspan{-}$ denotes the non-negative linear span.
\end{proposition}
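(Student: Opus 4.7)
The plan is to establish $\D(M) = \pspan{e_i - e_j \mid i \to j \text{ is an arrow of } Q}$ by proving both inclusions, relying on the structural fact that for a thin sincere representation $M$ (whose arrow maps we may take to be isomorphisms), the submodules of $M$ correspond bijectively to successor-closed subsets $S \subseteq Q_0$, with dimension vector $\mathbf{1}_S$. Consequently the King semistability conditions defining $\D(M)$ become $\sum_k v_k = 0$ together with $\sum_{k \in S} v_k \leq 0$ for every successor-closed $S \subsetneq Q_0$. The inclusion $\supseteq$ is then a direct verification: for an arrow $i \to j$, the vector $e_i - e_j$ satisfies $(e_i - e_j) \cdot \underline{\dim}(M) = 1 - 1 = 0$, and for any successor-closed $S$ the quantity $[i \in S] - [j \in S]$ is non-positive because $i \in S$ forces $j \in S$.

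For the reverse inclusion, I would reinterpret the decomposition $\bv = \sum_{i \to j} c_{ij}(e_i - e_j)$ as a non-negative flow on the directed acyclic graph $Q$, where $c_{ij} \geq 0$ is the flow along the arrow $i \to j$ and $v_k$ is the required net outflow at the vertex $k$. By a Hoffman-type feasibility theorem (equivalently by Farkas' lemma), such a flow exists precisely when $\sum_k v_k = 0$ and $\sum_{k \in S} v_k \leq 0$ for every successor-closed $S$ --- the latter inequality being forced by the fact that no arrow leaves a successor-closed set, so the net outflow across such a cut is necessarily non-positive. These are exactly the inequalities cutting out $\D(M)$, so the required decomposition exists and $\D(M) \subseteq \pspan{e_i - e_j \mid i \to j}$.

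The principal obstacle is this reverse inclusion, i.e., the clean invocation of the flow feasibility criterion; a viable alternative is to apply Asai's description of the facial structure of $\D(M)$ from \cite{Asai} and read off the extremal rays directly. A further point requiring care is the \emph{minimality} claim: if $Q$ contains an arrow $i \to j$ alongside an alternative directed path $i \to k_1 \to \cdots \to k_r \to j$, then $e_i - e_j = (e_i - e_{k_1}) + \cdots + (e_{k_r} - e_j)$ and the generator $e_i - e_j$ is redundant. I would address this either by restricting the generating set to arrows that admit no alternative directed path, or by showing that in the setup of interest (e.g.\ string-type quivers, where $Q$ is a line) such redundancies do not arise, so that the given spanning set is automatically minimal.
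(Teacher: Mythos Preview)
Your argument is correct and takes a genuinely different route from the paper. The paper's proof is short and structural: it invokes Asai's description of the facial structure of $\D(M)$ (rays correspond to filtrations by stables that admit no proper refinement other than the composition series) together with Lemma~\ref{lem:filtrations->stability_conditions} (for thin $M$, every filtration with indecomposable factors arises from some stability condition), and then observes that such ``almost-simple'' filtrations are exactly those with a single two-dimensional factor, which corresponds to an arrow $i \to j$. Your argument instead establishes the equality of cones directly: the inclusion $\supseteq$ by inspection, and $\subseteq$ by dualising via Farkas --- the dual cone $\{\by : y_i \geq y_j \text{ for all arrows } i \to j\}$ consists of monotone functions on $Q_0$, which decompose as non-negative combinations of indicators of predecessor-closed sets, whose pairings with $\bv$ are non-negative precisely by the semistability inequalities. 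This is more self-contained in that it avoids the dependence on \cite{Asai}, at the cost of importing a polyhedral duality step; the paper's route is shorter but leans on that external structural lemma. You even name the paper's approach as your fallback, so you have both in hand.

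Your concern about minimality is well placed, and in fact sharper than the paper's own treatment. If $Q$ contains an arrow $i \to j$ alongside a longer directed path from $i$ to $j$, then $e_i - e_j$ is redundant; and the paper's proof, read carefully, only shows that every ray corresponds to \emph{some} arrow, not that every arrow yields a ray. Indeed an arrow $i \to j$ gives a two-dimensional indecomposable subquotient of $M$ (hence a ray) if and only if it is a covering relation of the poset generated by $Q$ --- exactly as recorded later in Corollary~\ref{cor:cov_rel}. So the literal minimality claim holds only when $Q$ coincides with the Hasse quiver of its reachability order (no shortcut arrows), which is automatic for the linear and cyclic quivers underlying strings and bands. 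Your proposed fix --- restrict to arrows with no parallel directed path --- is the correct one.
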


Proposition~\ref{prop:intro:thin} has interesting connections with other areas of mathematics.
An acyclic quiver $Q$ defines a poset $P$. If $M_{P}$ is a thin sincere representation $Q$, then we show also that the faces of the stability space of $M_{P}$ correspond to connected compatible partitions of $P$. This recovers a result of Geissinger and Stanley describing the faces of the order polytope of a poset \cite{Geissinger81,Stanley86}.
We also show that Proposition~\ref{prop:intro:thin} has a natural microeconomic interpretation when the poset defined by the quiver is interpreted as a set of preferences between certain goods.

We then show how Proposition~\ref{prop:intro:thin} may be used to describe the stability spaces of string and band modules. Here, given a string or a band, we may consider the thin representation of the quiver with the same shape as the string or the band. We obtain the following description of the stability space of the string or band module.

\begin{theorem}[Theorems ~\ref{thm:big-small} and \ref{thm:big-small band}]
Let $M$ be a string $A$-module or a band $A$-module over a special biserial $K$-algebra $A$ and let $T$ be a thin representation corresponding to the string or band.
Then the stability space $\mathcal{D}(M)$ is the intersection of $\mathcal{D}(T)$ with the hyperplanes which set the coordinates of repeated vertices in the string or band equal to each other.
\end{theorem}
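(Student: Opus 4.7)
The plan is to work directly with the defining inequalities of stability spaces: $\bv \in \D(M)$ if and only if $\langle \bv, \dimu M \rangle = 0$ and $\langle \bv, \dimu N \rangle \leq 0$ for every submodule $N \subseteq M$. Let $\pi : (Q_w)_0 \to Q_0$ denote the map from positions of the abstract quiver $Q_w$ to vertices of the original quiver $Q$. The proof rests on the adjunction
\[
\langle \bv, \pi_*(\bw)\rangle_{\mathbb{R}^{Q_0}} = \langle \pi^*(\bv), \bw\rangle_{\mathbb{R}^{(Q_w)_0}},
\]
where $\pi_*$ sums coordinates along fibres of $\pi$ and $\pi^*(\bv)_j = \bv_{\pi(j)}$. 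The image of $\pi^*$ is precisely the hyperplane intersection $H = \{\mathbf{u} \in \mathbb{R}^{(Q_w)_0} : u_i = u_j \text{ whenever } \pi(i) = \pi(j)\}$, and (assuming $M$ is sincere, so $\pi$ is surjective) $\pi^*$ is a linear isomorphism onto $H$. Matching stability inequalities through this map will then yield the claimed equality $\D(M) = \D(T) \cap H$.

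For the string case, I would invoke the classical description of submodules of string modules due to Crawley--Boevey. The dimension vectors of submodules of $M_w$ are precisely
\[
\left\{\, \pi_*\!\left(\sum_{j \in S} e_j\right) : S \subseteq (Q_w)_0 \text{ successor-closed}\,\right\},
\]
and the parametrisation $S \mapsto \sum_{j \in S} e_j$ also exhausts the submodule dimension vectors of the thin representation $T$ on the (acyclic) abstract quiver $Q_w$. Applying the adjunction, for each successor-closed $S$ the $M$-inequality matches the $T$-inequality under $\bv \mapsto \pi^*(\bv)$; the equality $\langle \bv, \dimu M\rangle = 0$ transforms similarly using $\dimu M = \pi_*(\dimu T)$. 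This settles the string case.

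The band case follows the same template with two adjustments. First, the abstract quiver $Q_w$ of a band is cyclic (type $\widetilde{A}$), so Proposition~\ref{prop:intro:thin} does not directly apply; I would either extend it or argue directly to describe $\D(T)$ for a thin representation of a cycle, exploiting that the submodules of such a $T$ are again parametrised by successor-closed subsets of the cyclic quiver. Second, submodules of a band module $M(w,\lambda,m)$ split into \emph{string submodules} (coming from proper substrings of the band) and \emph{sub-band submodules} of the form $M(w,\lambda,k)$ for $k \leq m$. The former yield the same inequalities as in the string case; the latter have dimension vectors $k \cdot \dimu M(w,\lambda,1)$ and are therefore redundant given the equality $\langle \bv, \dimu M \rangle = 0$.

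The main obstacle is the band case: carefully verifying that every submodule of a band module falls into one of the two families above, and adapting Proposition~\ref{prop:intro:thin} to thin representations of cyclic quivers. A secondary subtlety is that the parametrisation of $M$-submodule dimension vectors by successor-closed subsets of $(Q_w)_0$ is not injective in general---distinct subsets may have the same image under $\pi_*$---but this only produces repeated inequalities on both sides and does not affect the conclusion.
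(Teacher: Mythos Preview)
Your adjunction framework is correct and is essentially how the paper organises the argument: both sides of the claimed equality are cut out by inequalities indexed by submodule dimension vectors, and the map $\pi^*$ (the paper's $\iota$) transports one set of inequalities to the other. The issue is the step you treat as known.

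You assert that the dimension vectors of submodules of $M(w)$ are precisely the $\pi_*$-images of characteristic vectors of successor-closed subsets of $(Q_w)_0$, citing Crawley--Boevey. But Crawley--Boevey's result gives a basis of $\mathrm{Hom}$-spaces between string modules, not a classification of submodules or of their dimension vectors. A submodule of a string module need not arise from a successor-closed subset of the abstract string: it can be \emph{diagonally embedded}. The paper's own Kronecker example shows this --- the diagonal copy of $S_2$ inside $\tcs{1\\2\:2}$ does not correspond to any subset of $\{2,1,2'\}$. The whole substance of the paper's proof is Proposition~\ref{prop: nondiagonal same dim}: given any submodule $L \subseteq M(w)$, one constructs a non-diagonally-embedded $\tilde L$ with $\dimu \tilde L = \dimu L$. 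This is done by writing the inclusion as a linear combination of Crawley--Boevey basis maps $f_{\sigma_j}$, extracting a cover of $v$ with no triple overlaps, and then inductively resolving the remaining overlaps by splitting off direct summands. Your proposal skips this entirely; without it, one direction of the equality (that $\iota(\D(M)) \supseteq \D(T) \cap \mathcal{R}$) is unproven, since a diagonally embedded submodule could in principle impose a strictly stronger inequality.

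Your band argument has the same gap and an additional one. The dichotomy ``string submodules versus $M(w,\lambda,k)$'' is incomplete: a band module $M(b,\lambda,1)$ can have proper band submodules $M(c,\mu,m)$ with $c \neq b$ or $\mu \neq \lambda$ (the paper addresses exactly this in its proof of Proposition~\ref{prop: nondiagonal same dim band}). And the string-type submodules of a band module can again be diagonally embedded, so the same de-diagonalisation argument is needed there too.
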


We then apply this theorem to describe in detail how the stability spaces of band modules are the limits of stability spaces of string modules. For a more formal statement of this theorem, see Theorem~\ref{thm:limit_nonthin}.

\begin{theorem}[Theorem~\ref{thm:limit_nonthin}]\label{thm:intro:convergence}
Let $A$ be a special biserial algebra and let $b$ be a band with $\D(M(b, 1, \lambda))$ the stability space of the corresponding band module.
Then $\D(M(b, 1, \lambda))$ is a union of cones such that each cone is the limit of a particular family of string modules.
\end{theorem}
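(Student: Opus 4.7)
My plan is to deduce Theorem~\ref{thm:limit_nonthin} from the descriptions already provided in Theorems~\ref{thm:big-small} and~\ref{thm:big-small band}, together with the explicit generating sets from Proposition~\ref{prop:intro:thin}.

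First, I would fix notation by writing the band $b$ as a cyclic word in arrows of $Q$ and letting $\widetilde{Q}_b$ be the cycle-shaped quiver unfolding $b$, with projection $\pi\colon \widetilde{Q}_b \to Q$. For each vertex $v$ of $\widetilde{Q}_b$ and each $n\geq 1$, the $n$-fold unfolding of the band starting at $v$ yields a string $b^{(n)}_v$ whose underlying shape is a path covering $\widetilde{Q}_b$ exactly $n$ times. By Theorem~\ref{thm:big-small band}, $\D(M(b,1,\lambda))$ is the intersection of the thin cone $\D(T_b)$ with the subspace $H$ identifying coordinates of repeated vertices in the cycle; by Theorem~\ref{thm:big-small}, $\D(M(b^{(n)}_v))$ admits the analogous description using the path.

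Second, I would apply Proposition~\ref{prop:intro:thin} on both sides: $\D(T_b)$ is positively generated by vectors $e_s - e_t$ indexed by the $\ell$ arrows of $\widetilde{Q}_b$, while the thin cone for $b^{(n)}_v$ has $n\ell$ such generators whose projections under $\pi$ give $n$ copies of each band-generator except for the one corresponding to the \emph{closing} arrow at $v$, which appears only $n-1$ times. Intersecting with the repeated-vertex subspaces and then projecting to $\mathbb{R}^{|Q_0|}$, I would show that the cones $\D(M(b^{(n)}_v))$ form a monotone chain stabilising to a limit cone $C_v$ contained in $\D(M(b,1,\lambda))$, and that $C_v$ is exactly the subcone of $\D(M(b,1,\lambda))$ on which the inequality corresponding to the closing arrow at $v$ is tight. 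Since every point of $\D(M(b,1,\lambda))$ must make at least one closing-arrow inequality tight---because of the linear dependence among the cyclic generators after imposing the repeated-vertex identifications---it lies in some $C_v$, yielding the decomposition $\D(M(b,1,\lambda)) = \bigcup_v C_v$ as a union of cones, each of which is the limit of the family of string modules $\{M(b^{(n)}_v)\}_{n\geq 1}$.

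The main obstacle will be the convergence step: the cones $\D(M(b^{(n)}_v))$ are a priori defined in ambient spaces of growing dimension, so one must carefully project them to $\mathbb{R}^{|Q_0|}$ and verify that every defining inequality of $C_v$ is eventually witnessed by an inequality of $\D(M(b^{(n)}_v))$. I expect this to come down to a combinatorial argument showing that the single missing closing inequality at $v$, which distinguishes the string picture from the band picture, is forced in the limit by the repeated-vertex identifications together with the inequalities from the other arrows, so that the Hausdorff limit of the projected cones genuinely agrees with the cone $C_v$ identified from the band side.
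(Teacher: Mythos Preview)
Your plan takes a genuinely different route from the paper, and in fact a simpler one, but several of your intermediate claims are miscast and would need to be replaced before the argument goes through.

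\textbf{Where your picture is off.}  The object $C_v$ you want is \emph{not} the locus where some ``closing--arrow inequality'' is tight; there is no inequality in the definition of $\D(M(b,1,\lambda))$ attached to a single arrow.  What actually happens (in the thin case) is that $C_v=\pspan{S\setminus\{g_v\}}$, the subcone obtained by \emph{dropping the generator} $g_v=e_i-e_j$ for the closing arrow, which is a full--dimensional subcone rather than a facet.  The covering $\bigcup_v C_v=\D(M(b,1,1))$ then follows from the single linear relation $\sum_\alpha \varepsilon_\alpha g_\alpha=0$ among the cyclic generators (with $\varepsilon_\alpha=\pm1$), since sliding along this relation inside any non-negative representation of a point eventually kills some coefficient.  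Your sentence about ``every point making at least one closing--arrow inequality tight'' should be replaced by this argument.  Secondly, the stability spaces $\D(M(b^{(n)}_v))$ already live in $\mathbb{R}^{|Q_0|}$; it is only the auxiliary thin representations $T(b^{(n)}_v)$ that live in growing ambient spaces, and Theorem~\ref{thm:big-small} collapses those back immediately.  So the ``main obstacle'' you anticipate is not one.

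\textbf{What your families actually do.}  In the paper's parametrisation your string $b^{(n)}_v$ is the case $w=\alpha_1\cdots\alpha_{\ell-1}$ (so $m=\ell-1$) of the family $b^{r}w$.  Here $\alpha_{m+1}=\alpha_\ell=\alpha_n$, so $w$ is neither a factor nor a submodule substring of $b$ and Proposition~\ref{prop:stringgenerators_general} gives $\mathbf{x}=0$.  Hence $\D(M(b^{(n)}_v))=\pspan{S\setminus\{g_v\}}=C_v$ for \emph{every} $n\geq1$: your sequence of cones is constant, the limit is trivial, and no convergence analysis is needed at all.  Combined with the covering argument above (and the reduction of the non-thin case via Theorem~\ref{thm:big-small band}, exactly as in the paper), this already proves the formal statement of Theorem~\ref{thm:limit_nonthin}.

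\textbf{Comparison with the paper.}  The paper's decomposition is finer: it removes \emph{two} generators and inserts the $g$-vector $\mathbf{g}=\mathbf{g}^{M(b,1,1)}$, obtaining cones $\pspan{(S\setminus\{\mathbf v,\mathbf w\})\cup\{\mathbf g\}}$ (Lemma~\ref{lem:cover}), each of which is the \emph{genuine} limit of the moving cones $\D(M(b^{r}w))$ for a prefix $w$ that is a factor or submodule substring (Proposition~\ref{prop:stringgenerators_general}).  The paper even notes, in the remark following Example~\ref{ex:convergence}, that the strings obtained by deleting a single arrow (your $b^{(1)}_v$) do cover the band's stability space but that this ``does not have the significance'' of the convergence result, since those strings are exceptional.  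So your route yields the theorem as stated with less work, but it bypasses the phenomenon the paper is really describing: that the wall of a band module is approached by an infinite family of \emph{distinct} walls of string modules whose cones genuinely move.
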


What is interesting is that, in general, one cannot approximate the whole stability space of the band module by a single family of string modules. Instead, it is necessary to decompose the stability space of the band module into smaller cones in this way. We illustrate this in Example~\ref{ex:convergence}.

Theorem~\ref{thm:intro:convergence} is related to recent work on $g$-tameness \cite{AokiYurikusa,KPY20,Asai22}. There is a subfan of the wall-and-chamber structure known as the `$g$-vector fan'. Algebras such that this fan is dense were called `$g$-tame' in \cite{AokiYurikusa}, where it was shown that complete special biserial algebras were $g$-tame. This implies that every point in $\mathbb{R}^{n}$ is a limit of cones in the $g$-vector fan.

This paper is structured as follows.
In Section~\ref{sec:background}, we give the necessary background to the paper on stability spaces and string and band modules.
In Section~\ref{sec:thin}, we describe the stability spaces of thin modules and show how this description relates to order polytopes and may be interpreted in terms of microeconomics.
In Section~\ref{sec:modules}, we show how the stability spaces of string and band modules may be described.
Finally, in Section~\ref{sec:w&cspecial}, we apply the results of the preceding section to describe how stability spaces of band modules are the limits of stability spaces of string modules.

\subsection*{Acknowledgements} This  paper is the product of a working group at the Department of Pure Mathematics at the University Leicester where the project originated. At the time of writing, it does not seem that there will be many more ideas born in the same place. We express deep support for the Pure Mathematics researchers at the University of Leicester.

SS and HT were supported by the EPSRC through the Early Career Fellowship, EP/P016294/1.
HT is also supported by the European Union’s Horizon 2020 research and innovation programme under the Marie Sklodowska-Curie grant agreement No 893654. 
YV is also supported by the European Union’s Horizon 2020 research and innovation programme under the Marie Sklodowska-Curie grant agreement No H2020-MSCA-IF-2018-838316.
SS acknowledges partial support from the DFG through  the project SFB/TRR 191 Symplectic Structures in Geometry, Algebra and Dynamics (Projektnummer 281071066– TRR 191).
SS, HT, YV, and NJW would like to thank the Isaac Newton Institute for Mathematical Sciences, Cambridge, for support and hospitality during the programme “Cluster Algebras and Representation Theory” where part of the work on this paper was undertaken. This work was supported by EPSRC grant no EP/K032208/1 and the Simons Foundation.
NJW is supported by a JSPS International Short-term Post-doctoral Research Fellowship.

\section{Background}\label{sec:background}

Throughout this paper we let $A$ be a finite-dimensional algebra over a field $K$ of the form $A = KQ/I$ where $Q$ is a quiver and $I$ is an admissible ideal of the path algebra $KQ$. From Section~\ref{sec:modules}, we will assume that $K$ is algebraically closed. Let $Q_{0}$ be the set of vertices of $Q$ and let $Q_{1}$ be the set of arrows. If $\alpha$ is an arrow of $Q$, then we denote the source of $\alpha$ by $s(\alpha) \in Q_{0}$ and the target of $\alpha$ by $t(\alpha) \in Q_{0}$. For $\alpha$ and $\beta$ arrows in $Q$ with $t(\alpha) = s(\beta)$, we write $\alpha\beta$ for the non-zero product of $\alpha$ and $\beta$ in $KQ$.

An important family of algebras that we will study in this article is the family of special biserial algebras \cite{BR87, WW85}. 
These are algebras $A=KQ/I$ which are defined by combinatorial properties of their quiver $Q$ and ideal $I$.
At the level of the quiver, an algebra $A=KQ/I$ is said to be \emph{special biserial} if for every vertex $v \in Q_0$ there are at most two arrows $\alpha, \beta \in Q_1$ such that $t(\alpha)=t(\beta)=v$ and at most two arrows $\gamma, \delta \in Q_1$ such that $s(\gamma)=s(\delta)=v$.
At the level of relations, if there are arrows $\alpha, \gamma, \delta$ such that $t(\alpha)=s(\gamma)=s(\delta)$, then $\alpha\gamma \in I$ or $\alpha\delta \in I$ or both.
Dually, if there are arrows $\alpha, \beta, \gamma$ such that $t(\alpha)=t(\beta)=s(\gamma)$, then $\alpha\gamma \in I$ or $\beta\gamma \in I$ or both.

\subsection{Representations of quivers and stability spaces}

We denote by $\mc A$ the category of finite-dimensional right $A$-modules. 
It is well-known that this category is equivalent to the category of \emph{representations} $(M_{i}, \phi_{\alpha})_{i \in Q_{0}, \alpha \in Q_{1}}$ of the bound quiver $(Q, I)$. 
Here a \emph{representation} of $Q$ consists of a finite dimensional vector space $M_{i}$ for every $i \in Q_{0}$ and a linear map $\phi_{\alpha}\colon M_{i} \to M_{j}$ for every arrow $\alpha\colon i \to j$ in $Q_{1}$. 
Given a path $w = \alpha_{1}\dots \alpha_{n}$ in $Q$, we write $\phi_{w} = \phi_{\alpha_{n}} \dots \phi_{\alpha_{1}}$. Similarly, given a linear combination of paths $\rho = \sum_{i = 1}^{r} \lambda_{i}w_{i}$, we write $\phi_{\rho} = \sum_{i = 1}^{r}\lambda_{i}\phi_{w_{i}}$. Then $(M_{i}, \phi_{\alpha})_{i \in Q_{0}, \alpha \in Q_{1}}$ is a \emph{representation of the bound quiver} $(Q, I)$ if $\phi_{\rho} = 0$ for all $\rho \in I$.

The \emph{dimension vector} $\dimu M$ of a representation $Q$ of $M$ is the vector with entries $\dim M_{i}$. We denote by $K_{0}(A)$ the Grothendieck group of $\mc A$. This is a free abelian group of rank $n$, where $n$ is the number of vertices of the quiver. Given a $A$-module $M$, we write $[M]$ for the class of $M$ in $K_{0}(A)$. We have that $[M] = \dimu M$ as vectors of integers.

We say that an $A$-module $M$ is \emph{thin}  if
all composition factors of $M$ are pairwise non-isomorphic. 
As consequence, if a module $M$ is thin, we have that the dimension vector $\dimu M$ has  only  0 and 1 as entries.

Given an $A$-module $M$, we define the \emph{support} $\supp M$ of $M$ to be the full subquiver of $Q$ with vertices \[\supp_{0}M := \{i \in Q_{0} \mid M_{i} \neq 0\}.\]
We say that $M$ is \emph{sincere} if $\supp_{0}M = Q_{0}$.

Let $M$ be an $A$-module. 
Let\[P_{-1}\longrightarrow P_0\longrightarrow M\longrightarrow 0\] be the minimal projective presentation of $M$, where $P_0=\bigoplus_{i=1}^n P(i)^{a_i}$ and $P_{-1} = \bigoplus_{i=1}^n P(i)^{b_i}$. 
Then the \emph{$g$-vector} of $M$ is defined to be \[\g{M} = (a_1-b_1, a_2-b_2,\dots, a_n-b_n).\]

\subsection{String and band modules}

We now recall the definitions of string and band modules, for which we need the following combinatorics. 
We refer to \cite{BR87} for more details on these modules.
For every arrow $\alpha \colon i \to j$ in $Q_{1}$, we write $\overline{\alpha} \colon j \to i$ for its formal inverse. 
It is also convenient to define $\overline{\overline{\alpha}} := \alpha$.
We write $\overline{Q_{1}}$ for the set of formal inverses of arrows of $Q_{1}$. 
We refer to elements of $Q_{1}$ as \emph{direct arrows}, and elements of $\overline{Q_{1}}$ as \emph{inverse arrows}. 

A \emph{walk} is a sequence $\alpha_{1} \dots \alpha_{r}$ of elements of $Q_{1} \cup \overline{Q_{1}}$ such that $t(\alpha_{i}) = s(\alpha_{i + 1})$ and $\alpha_{i + 1} \neq \overline{\alpha_{i}}$ for every $i \in \{1, \dots, r - 1\}$. 
Given a walk $w = \alpha_{1}\dots\alpha_{r}$, we define $\overline{w} = \overline{\alpha_{r}}\dots\overline{\alpha_{1}}$.
When we want to emphasise a vertex $e \in Q_0$ traversed by a walk $w$ we write $e$ in the sequence of $w$. For example, $w = \alpha_1 e \alpha_2$ when $e =t(\alpha_{1}) = s(\alpha_{2})$.

A \emph{string} for $A$ is a walk $w$ in $Q$ avoiding the zero relations in $I$ and such that neither $w$, $\overline{w}$ nor any subword of either is a summand in a relation. 
Given a string $w$, the \emph{string module} $M(w)$ is obtained by replacing every vertex traversed by $w$ by a copy of the field $K$, direct arrows in $w$ by the identity map, and inverse arrows by the identity map in the opposite direction.

A \emph{band} $b$ for $A$ is defined to be a cyclic string such that every power $b^{n}$ is a string, but $b$ itself is not the proper power of some string $w$.
A band $b = \alpha_{1} \dots \alpha_{r}$ can be represented by several different words, such as $\alpha_{2} \dots \alpha_{r}\alpha_{1}$. We refer to these words as \emph{rotations} of the band. 
Given a band $b = \alpha_{1} \dots \alpha_{r}$, an element $\lambda \in K^{\ast}$, and $m \in \mathbb{N}$, the band module $M(b, \lambda, m)$ is obtained from the band $b$ by replacing each vertex traversed by $b$ with a copy of the $K$-vector space $K^{m}$, every direct arrow $\alpha_{i}$ for $i \in \{1, \dots, r - 1\}$ by the identity matrix of dimension $m$, inverse arrows by the identity map in the opposite direction, and the arrow $\alpha_{r}$ by a Jordan block of dimension $m$ with eigenvalue $\lambda$.

We will need to use the descriptions of morphisms between string and band modules from  \cite{CB89,Kra91}, which are as follows. 
Let $w = \alpha_{1} \dots \alpha_{i} \dots \alpha_{j} \dots \alpha_{r}$ be a string with substring $u = \alpha_{i} \dots \alpha_{j}$. 
We say that $u$ is a \emph{submodule substring} if $\alpha_{i - 1}$ is a direct arrow, or $i = 1$, and $\alpha_{j+1}$ is an inverse arrow, or $j = r$. We say that $u$ is a \emph{factor substring} if $\alpha_{i - 1}$ is an inverse arrow, or $i = 1$, and $\alpha_{j + 1}$ is a direct arrow, or $j = r$. 
By \cite{CB89}, given two strings $v, w$ which have a common substring $u$ which is a factor substring of $v$ and such that either $u$ or $\overline{u}$ is a submodule substring of $v$, then there is a map $f_{u}\colon M(v) \to M(w)$. 
As a map of quiver representations, $f_{u}$ is the identity on vertices traversed by $u$, and zero on other vertices.
Moreover, maps of this form constitute a $K$-basis of $\homa(M(v), M(w))$.

Similarly, a $K$-basis of  $\homa(M(v), M(b, \lambda, n))$ (resp.\ $\homa(M(b, \lambda,$ $n), M(v))$) is given by $f_u$ where $u$ is a factor (resp.\ submodule) substring of $v$ and a submodule (resp.\ factor) substring of the infinite string ${}^\infty b^{\infty}$ formed by infinite composition of $b$ with itself.

The situation for maps between band modules is slightly different. Let $M(b, \lambda, n)$ and $M(c, \mu, m)$ be band modules. If either $b \neq c$ or $\lambda \neq \mu$ then a $K$-basis of $\homa( M(b, \lambda, n)$, $M(c, \mu, m))$ is given by maps $f_{(u, \phi)}$ where $u$ is a finite string that is a factor substring of ${}^\infty b^{\infty}$ and a submodule substring of ${}^\infty c^{\infty}$ and $\phi$ is an element of a given basis of $\mathrm{Hom}_{K}(K^n, K^m)$. For more details on these maps, see \cite{STV}.

If $b=c$ and $\lambda = \mu$ then the $K$-basis contains an additional class of maps $f_\psi$ where $\psi \in \mathrm{Hom}_{K}(K^n, K^m)$ and $f_\psi$ is given at a vertex $e$ by a diagonal block $nk \times mk$ matrix where $k$ is the number of occurences of $e$ in the string $b$ and the diagonal blocks are given by $\psi$.

Note that string and band modules give indecomposable modules for every algebra of the form $A=KQ/I$, but the converse is not true in general.
However, for special biserial algebras over an algebraically closed field this is almost true. 
To be more precise, if $A$ is a special biserial algebra, then every indecomposable non-projective $A$-module is either a string module or a band module \cite{BR87, WW85}.

\subsection{Stability spaces and polyhedral geometry} \label{subsec:prelim-stab-spaces-geometry}

We recall the notion of stability from \cite{King94}. We denote the standard inner product on $\mathbb{R}^{n}$ by $\ip{-, -}$. Given $\bv \in \mathbb{R}^{n}$, we say that a non-zero $A$-module $M$ is \emph{$\bv$-semistable} if $\ip{\bv, [M]} = 0$ and $\ip{\bv, [L]} \leqslant 0$ for every proper submodule $L$ of $M$. If $M$ is $\bv$-semistable and $\ip{\bv, [L]} \neq 0$ for all proper submodules $L$ of $M$, we say that $M$ is \emph{$\bv$-stable}. The \emph{stability space} of an $A$-module $M$ is then defined to be \[\D(M) := \{\bv \in \mathbb{R}^{n} \mid M \text{ is }\mathbf{v}\text{-semistable}\}.\] We write $\mathcal{W}_{\bv}$ for the category of $\bv$-semistable $A$-modules. This is a wide subcategory of $\mc A$ by \cite[Proposition 3.24]{BST19}, and hence is an abelian category. The relative simple modules in $\mathcal{W}_{\bv}$ are precisely the $\bv$-stable modules.
These $\bv$-stable modules are always bricks, where a module is a \emph{brick} if its endomorphism algebra is a division ring.

We define the notation \[\Ds(M) := \{\bv \in \mathbb{R}^{\supp_{0}M} \mid M \text{ is }\mathbf{v}\text{-semistable}\}.\] A principle we use throughout is that in order to compute the minimal generating set of $\D(M)$, it suffices to compute $\Ds(M) :=\D(M) \cap \mathbb{R}^{\supp_{0} M}$. This is because if $W$ is a minimal generating set of $\D(M) \cap \mathbb{R}^{\supp_{0}M}$, then $W \cup \{\pm e_{i} \mid i \in Q_{0}\setminus \supp_{0}M\}$ is the minimal generating set of $\D(M)$. See \cite[Lemma~2.5(3)]{Asai}. Hence, it suffices to consider the case where $M$ is sincere.

A \emph{polyhedral cone} $C$ is a subset of $\mathbb{R}^{n}$ of the form \[C = \{ \bv \in \mathbb{R}^{n} \mid A\bv \leqslant 0\},\] where $A$ is a $k \times n$ matrix for some $k$.
It is clear from the definition that $\D(M)$ is a polyhedral cone, since there are only finitely many Grothendieck-group classes of submodules of $M$. The \emph{Minkowski--Weyl Theorem} states that every polyhedral cone admits a description as a non-negative linear span of finitely many vectors \[C = \pspan{\bw_{1}, \dots, \bw_{r}} := \left\lbrace \sum_{i = 1}^{r}\lambda_{i}\bw_{i} \mid \lambda_{i} \geqslant 0 \right\rbrace,\] and conversely that every non-negative linear span of finitely many vectors is a polyhedral cone. If $W = \{\bw_{1}, \dots, \bw_{r}\}$ is minimal with respect to inclusion such that $\pspan{W} = C$, then we say that $W$ is a \emph{minimal generating set for $C$}. In this case, we call $\pspan{\bw_{i}}$ the \emph{rays} of the cone $C$ and $\bw_{i}$ \emph{ray vectors}. A cone is \emph{simplicial} if any minimal generating set is linearly independent.

Recall that a \emph{face} $F$ of a cone $C$ is a subset of $C$ which maximises some linear functional on $C$. That is, \[F = \{\bv \in C \mid \ip{\ba,\bv} \geqslant \ip{\ba, \bw} \; \forall \, \bw \in C\}\] for some $\ba \in \mathbb{R}^{n}$.
A \emph{facet} of $C$ is a face of codimension one. The rays of $C$ are precisely the faces of dimension 1.


\section{Stability spaces of thin representations}\label{sec:thin}

In this section, we show how one may compute the stability space of a thin quiver representation as a non-negative span of a set of vectors. We start by showing how the description of the stability space of a thin module as a non-negative linear span follows from a result of Asai on the facial structure of stability spaces. We then show that this description of the stability spaces of thin modules is related to a result of Geissinger and Stanley describing the facial structure of order polytopes of posets. Finally, we show how the description has a natural application in microeconomics. In later sections, we will show how one can extend the description to string and band modules which are not necessarily thin.

\subsection{The facial structure of a stability space}

We recall the description of the facial structure of a stability space from \cite{Asai}. For a non-zero $A$-module $M$, define \[\simp_{\bv}M := \left\{S \in \brick A \,\middle|\, \parbox{6cm}{$S$  is a simple object appearing in \\ a composition series for  $M$ in $\mathcal{W}_{\bv}$}\right\}\] 
where $\brick A$ denotes the set of bricks in the module category of $A$.  The result \cite[Lemma~2.6]{Asai} states the following. Let $M$ be a non-zero $A$-module, let $\bv \in \D(M)$, and let \[H := \ker \ip{-, [\simp_{\bv}M]} = \{\bx \in \mathbb{R}^{n} \mid \ip{\bx, [S]} = 0 \text{ for all } S \in \simp_{\bv}M\} \subseteq \mathbb{R}^{n}.\] Then
\begin{enumerate}
\item $\D(M) \cap H$ is the smallest face of $\D(M)$ containing $\bv$;
\item for $\bv' \in \D(M)$, we have $\bv' \in \D(M) \cap H$ if and only if $\simp_{\bv}M \subseteq \mathcal{W}_{\bv'}$.
\item for $\bv' \in \D(M)$ and $H' = \ker \ip{-, [\simp_{\bv'}M]}$, we have that $H' \cap \D(M) = H \cap \D(M)$ if and only if $\simp_{\bv}M = \simp_{\bv'}M$.
\end{enumerate}
In other words, the faces of $\D(M)$ are in bijection with filtrations of $M$ by stable modules under some stability condition. One face is contained in another if the filtration associated with the smaller face refines the filtration associated with the larger face.

Describing the facial structure of $\D(M)$ is particularly neat when $M$ is a thin  module.

\begin{lemma}\label{lem:filtrations->stability_conditions}
Let $M$ be a thin module. For every filtration $\mathfrak{F}$ of $M$ with indecomposable factors, there is a stability condition $\bv$ such that $\mathfrak{F}$ is the filtration of $M$ by $\bv$-stable modules.
\end{lemma}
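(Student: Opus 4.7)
The plan is to construct $\bv$ by gluing together stability conditions arising from the individual factors of the filtration $\mathfrak{F}\colon 0 = M_{0} \subsetneq M_{1} \subsetneq \cdots \subsetneq M_{k} = M$. First I observe that since $M$ is thin, so is every subquotient, so each factor $N_{i} := M_{i}/M_{i-1}$ is a thin indecomposable module. Any endomorphism of such a module must act as the same scalar on every one-dimensional piece (by connectedness of the support), so each $N_{i}$ is in fact a brick. Moreover, since $[M] = \sum_{i}[N_{i}]$ and all entries of $[M]$ lie in $\{0,1\}$, the supports $\supp_{0} N_{1}, \ldots, \supp_{0} N_{k}$ form a partition of $\supp_{0} M$.

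Next, for each $i$ I will choose $\bv^{(i)} \in \mathbb{R}^{\supp_{0} N_{i}}$ such that $N_{i}$ is $\bv^{(i)}$-stable as a representation of its support quiver. Since the supports $\supp_{0} N_{i}$ are pairwise disjoint, I then define $\bv \in \mathbb{R}^{n}$ by $\bv|_{\supp_{0} N_{i}} := \bv^{(i)}$ for each $i$ and $\bv_{j} := 0$ for $j \notin \supp_{0} M$. The main obstacle is the existence of $\bv^{(i)}$; this follows from the general fact that every brick admits a stability condition making it strictly stable, and for thin indecomposables it can also be verified directly by a combinatorial construction on the support quiver.

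Finally, I verify that $\mathfrak{F}$ is the filtration of $M$ by $\bv$-stable modules. For any submodule $L \subseteq M$, the filtration induces $0 = L \cap M_{0} \subseteq \cdots \subseteq L \cap M_{k} = L$, whose factors $L_{i} := (L \cap M_{i})/(L \cap M_{i-1})$ embed as submodules of $N_{i}$. By additivity $\ip{\bv, [L]} = \sum_{i} \ip{\bv, [L_{i}]}$, and each summand is nonpositive by $\bv$-stability of $N_{i}$, with equality if and only if $L_{i} \in \{0, N_{i}\}$. This simultaneously proves that $M$ is $\bv$-semistable and that each $N_{i}$ remains $\bv$-stable in $\mathcal{W}_{\bv}$. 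Because $\mathcal{W}_{\bv}$ is closed under extensions and contains every $N_{i}$, each $M_{i}$ also lies in $\mathcal{W}_{\bv}$, so $\mathfrak{F}$ is a composition series of $M$ in $\mathcal{W}_{\bv}$ with $\bv$-stable factors, as required.
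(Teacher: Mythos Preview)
Your proof is correct and follows essentially the same strategy as the paper: both arguments hinge on the observation that, because $M$ is thin, the supports of the factors $N_{i}$ are pairwise disjoint, so a stability condition making each $N_{i}$ stable can be chosen independently on each support and then assembled into a single $\bv$. The paper phrases this as ``these inequalities and equations are consistent'' and leaves the final clause (that $\mathfrak{F}$ is then the filtration of $M$ by $\bv$-stables) unproven; you fill in both of these points explicitly, in particular the verification via the induced filtration $L\cap M_{i}$ that any submodule $L$ satisfies $\ip{\bv,[L]}\le 0$ with equality exactly when each $L_{i}\in\{0,N_{i}\}$.
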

\begin{proof}
Let $\mathfrak{F}$ be the filtration \[0 = M_{0} \subset M_{1} \subset \dots \subset M_{r} = M\] of $M$ and let $S_{i} = M_{i}/M_{i - 1}$ be the factors of the filtration. If $\bv \in \mathbb{R}^{n}$ is such that $S_{i}$ are all $\bv$-stable, then we must have $\ip{\bv, \dimu S_{i}} = 0$ and $\ip{\bv, L} < 0$ for all proper submodules $L$ of $S_{i}$, for all $i$. The bricks $S_{i}$ do not share any composition factors, since $M$ is a thin quiver representation. Hence these inequalities and equations are consistent, which means there does exist such a $\bv \in \mathbb{R}^{n}$ which makes all the $S_{i}$ $\bv$-stable. Consequently, $\mathfrak{F}$ is the filtration of $M$ with $\bv$-stable modules as factors.
\end{proof}

Hence, for a thin module $M$, the faces of $\D(M)$ simply correspond to filtrations of $M$ with indecomposable factors, since all such filtrations arise from stability conditions. We can now use the description of the facial structure of a stability space to compute the stability space of a thin representation as a non-negative span of ray vectors.

\begin{proposition}\label{prop:stability space of thin modules}
Let $M$ be a thin representation of an acyclic quiver $Q$.
Then a minimal generating set of $\D(M)$ is given by \[\{e_{i} - e_{j} \mid i \to j \text{ is an arrow of } Q\}.\]
\end{proposition}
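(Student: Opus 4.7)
The approach is to read off the rays of $\D(M)$ from the facial description of stability spaces recalled above, combined with Lemma~\ref{lem:filtrations->stability_conditions}. By the reduction in Section~\ref{subsec:prelim-stab-spaces-geometry} I may assume $M$ is sincere, so $\dimu M = (1,\dots,1)$, every submodule of $M$ has a 0-1 dimension vector, and $\D(M)$ sits inside the hyperplane $\sum_{i} v_i = 0$.

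By Lemma~\ref{lem:filtrations->stability_conditions}, the faces of $\D(M)$ are in bijection with filtrations of $M$ by indecomposable factors. Given such a filtration with factors $S_1,\dots,S_r$, the associated face is cut out by the equations $\ip{\bv, [S_k]} = 0$. Since $M$ is thin, the vectors $[S_1],\dots,[S_r]$ are 0-1 vectors whose supports partition $Q_0$; they are therefore linearly independent, and the face they define has dimension $n - r$. Consequently, rays of $\D(M)$ are in bijection with filtrations of $M$ having exactly $n-1$ indecomposable factors. Such a filtration contains $n - 2$ simple factors and one 2-dimensional indecomposable factor $U$ supported on a pair of vertices $\{i, j\}$; indecomposability of $U$ forces an arrow between $i$ and $j$ in $Q$ acting nontrivially on $M$, which I can take to be $i \to j$.

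Solving the system $\ip{\bv, [S_k]} = 0$ for such a filtration forces $v_k = 0$ for $k \notin \{i, j\}$ and $v_i + v_j = 0$, so $\bv$ lies on the line spanned by $e_i - e_j$. The direction is fixed by the submodule inequality coming from the unique proper submodule of $U$, which is the simple at vertex $j$: this forces $v_j \leq 0$, giving the ray vector $e_i - e_j$. Conversely, to each arrow $i \to j$ of $Q$ I would associate a filtration of the required form by choosing a linear extension of the poset defined by $Q$ in which $i$ and $j$ appear consecutively, and replacing the two corresponding steps by a single step with factor $U$.

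The main obstacle is precisely this last point, namely verifying that for every arrow of $Q$ the requisite filtration actually exists, so that every arrow genuinely produces a ray. This amounts to exhibiting an upper set $T$ in the poset defined by $Q$ such that $T \cup \{i, j\}$ is also an upper set, and arguing that the quotient on $\{i, j\}$ is isomorphic to $U$; the acyclicity of $Q$ is essential here, since it guarantees that a suitable linear extension can always be chosen. Once the bijection between rays and arrows is established, minimality of the generating set is immediate: distinct arrows yield distinct unordered pairs $\{i, j\}$ and hence distinct rays $\pspan{e_i - e_j}$.
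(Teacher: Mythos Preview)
Your approach is essentially identical to the paper's: both arguments combine Asai's facial description with Lemma~\ref{lem:filtrations->stability_conditions} to identify rays with filtrations having all but one factor simple, and then read off the ray vector $e_i - e_j$ from the unique two-dimensional factor supported on an arrow $i \to j$. You supply more detail than the paper does---in particular the dimension count $\dim(\D(M)\cap H) = n - r$ and the converse direction that every arrow actually arises from such a filtration via a linear extension---whereas the paper simply asserts the bijection and leaves these points implicit.
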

\begin{proof}
We know from \cite[Lemma~2.6]{Asai} and Lemma~\ref{lem:filtrations->stability_conditions} that the rays of a stability space of a thin representation correspond to filtrations of the representation which can only be refined to the filtration by the simple modules. Such filtrations are evidently the ones where all but one factor is simple, and the non-simple factor only has two composition factors. The non-simple factor therefore corresponds to an arrow $i \to j$ of the quiver. The corresponding ray consists of the span of $e_{i} - e_{j}$, since the module $\tcs{$i$\\ $j$}$ must be stable, as must all the other simple modules.
\end{proof}

We hence obtain a neat description of the stability space $\D(M)$ of a thin module $M$ as a non-negative linear span of vectors rather than a space cut out by inequalities.

\begin{example}
Consider the following quiver $Q$.
\[
\begin{tikzcd}
& 1 \ar[dr] \ar[dl] & \\
2 \ar[dr] && 3 \ar[dl] \\
& 4 &
\end{tikzcd}
\]
Let $M$ be the representation of $Q$ with dimension vector $\tcs{~1~\\2~3\\~4~}$ and arrows given by the identity. Then by Proposition~\ref{prop:stability space of thin modules} \[\D(M) = \pspan{(1, -1, 0, 0), (1, 0, -1, 0), (0, 1,  0, -1), (0, 0, 1, -1)}\] and, moreover, this is a minimal generating set of $\D(M)$.
\end{example}

We will use the following fact later.

\begin{lemma}\label{lem:simplicial_cone}
Let $Q$ be a quiver such that the underlying graph is a tree. 
Let $M$ be a thin sincere representation of $Q$. 
Then $\D(M)$ is a simplicial cone.
\end{lemma}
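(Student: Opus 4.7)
The plan is to combine Proposition~\ref{prop:stability space of thin modules}, which already identifies the minimal generating set, with a straightforward linear algebra argument exploiting the tree structure of $Q$. By Proposition~\ref{prop:stability space of thin modules}, a minimal generating set for $\D(M)$ is
\[
W = \{e_i - e_j \mid i \to j \text{ is an arrow of } Q\}.
\]
By the definition of a simplicial cone given in Subsection~\ref{subsec:prelim-stab-spaces-geometry}, to conclude that $\D(M)$ is simplicial it suffices to show that $W$ is linearly independent in $\mathbb{R}^{n}$, where $n = |Q_0|$.

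Since $Q$ has a tree as its underlying graph, the number of arrows equals $|Q_0|-1=n-1$, so $|W|=n-1$. The plan is to prove linear independence by induction on $n$. The base case $n=1$ is trivial (there are no arrows and $W$ is empty). For the inductive step, suppose
\[
\sum_{\alpha \in Q_1} c_\alpha\bigl(e_{s(\alpha)} - e_{t(\alpha)}\bigr) = 0.
\]
Because the underlying graph is a tree on $n\geq 2$ vertices, it admits a leaf $v \in Q_0$; that is, a vertex incident to exactly one arrow $\alpha_0$. Comparing the coefficient of $e_v$ on both sides of the relation forces $\pm c_{\alpha_0}=0$, since no other generator in $W$ involves $e_v$. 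Hence $c_{\alpha_0}=0$. Removing $v$ from $Q$ (together with $\alpha_0$) yields a smaller tree whose arrow set indexes the remaining generators, so the inductive hypothesis applied to this subquiver gives $c_\alpha=0$ for all remaining arrows $\alpha$.

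I do not expect any real obstacle: the content of the lemma is essentially that the signed incidence matrix of a tree has full rank $n-1$, and the use of a leaf vertex isolates one coefficient at a time in a completely elementary way. The only thing to be mindful of is quoting Proposition~\ref{prop:stability space of thin modules} correctly and invoking the simplicial-cone definition verbatim, so that the argument reduces cleanly to the combinatorial statement about trees.
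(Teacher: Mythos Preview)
Your proposal is correct and takes a genuinely different route from the paper. The paper's proof assumes a nontrivial linear dependency among the generators $e_i - e_j$, splits it into the terms with positive and negative coefficients, and then walks through the quiver alternating between the two sides to produce an undirected cycle in the underlying graph, contradicting the tree hypothesis. Your argument instead runs a leaf-stripping induction: pick a leaf $v$, observe that only one generator involves $e_v$, so its coefficient must vanish, then delete $v$ and recurse. This is the standard textbook proof that the signed incidence matrix of a tree has rank $n-1$, and it is more elementary and requires less bookkeeping than the paper's path-building argument. Both proofs invoke Proposition~\ref{prop:stability space of thin modules} in exactly the same way to reduce the question to linear independence of the arrow vectors, so the only difference is in how that linear-algebra fact is established.
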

\begin{proof}
Let $\sum_{i \to j \in Q_{1}}\lambda_{ij}(e_{i} - e_{j}) = 0$ be a linear dependency for the minimal generating set of $\D(M)$, which we know from Proposition~\ref{prop:stability space of thin modules}. 
Let us split this sum in two depending upon whether the coefficient $\lambda_{ij}$ is positive or negative, giving 
\[\sum_{\substack{i \to j \in Q_{1}\\ \lambda_{ij} > 0}}\lambda_{ij}(e_{i} - e_{j}) = \sum_{\substack{k \to l \in Q_{1}\\ \lambda_{kl} < 0}} - \lambda_{kl}(e_{k} - e_{l}).\] 
Consider expanding the sum on each side so that it is in terms of the $e_{i}$. Since the underlying graph of $Q$ is a tree, $Q$ has no oriented cycles, and so there must be terms $e_{i}$ on each side of the equation which have non-zero coefficients. Suppose $e_{i_{1}}$ has a positive coefficient on the left hand-side of the equation. Hence, there is an arrow $i_{1} \to i_{2}$ in $Q_{1}$ with $\lambda_{i_{1}i_{2}} > 0$. Then, either $e_{i_{2}}$ has a negative coefficient on the left hand side of the equation, or there is some arrow $e_{i_{2}} \to e_{i_{3}}$ with $\lambda_{i_{2}i_{3}} > 0$.

We continue constructing a path $e_{i_{1}} \to e_{i_{2}} \to \dots \to e_{i_{r}}$ in $Q_{1}$ in this way until we reach $e_{i_{r}}$ which has a negative coefficient on the left-hand side of the equation. We then know that $e_{i_{r}}$ has a negative coefficient on the right-hand side of the equation, so that there is an arrow $e_{i_{r + 1}} \to e_{i_{r}}$ in $Q_{1}$ with $\lambda_{i_{r + 1}i_{r}} < 0$. We then proceed in the same way, constructing a path $e_{i_{r}} \leftarrow e_{i_{r + 1}} \leftarrow \dots$ until we find a positive coefficient, in which case we go back over to the left hand side of the equation. The quiver is finite, and so this process must eventually return to a vertex it has already visited before, in which case we have found an unoriented cycle. This is a contradiction, and so our original linear dependency cannot have existed. Note that the set of arrows on the left-hand side of the equation is disjoint from the arrows on the right-hand side, so that we do not traverse backwards along any arrows that we have already traversed forwards.
\end{proof}

We can also describe the facets of the stability space of a thin module, which will become useful later.

\begin{lemma}\label{lem:facets}
Suppose that $M$ is an indecomposable thin module. Then the facets of $\D(M)$ are given by the Cartesian products $\Ds(L) \times \Ds(N)$ for indecomposable submodules $L$ of $M$  with indecomposable quotients $N = M/L$.
\end{lemma}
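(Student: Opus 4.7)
The plan is to combine the facial description in \cite[Lemma~2.6]{Asai} with Lemma~\ref{lem:filtrations->stability_conditions}. Together these identify the faces of $\D(M)$ with filtrations $0=M_0\subset M_1\subset\cdots\subset M_r=M$ whose successive quotients $S_i$ are indecomposable. My first step is a dimension count: since $M$ is thin, the factors $S_1,\dots,S_r$ have pairwise disjoint supports, so their classes in $K_0(A)$ are linearly independent apart from the single relation $[M]=\sum_i[S_i]$. The face of $\D(M)$ corresponding to such a filtration is cut out of $\D(M)$ by the $r$ equations $\ip{\bv,[S_i]}=0$, of which exactly $r-1$ are independent modulo $\ip{\bv,[M]}=0$. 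Hence the face has codimension $r-1$, and codimension one forces $r=2$. Every facet thus corresponds to a two-step filtration $0\subset L\subset M$ with $L$ and $N:=M/L$ both indecomposable.

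It then remains to identify the facet associated to such an $L$ with $\Ds(L)\times\Ds(N)$, which lives naturally in $\mathbb{R}^{\supp_{0} M}=\mathbb{R}^{\supp_{0} L}\oplus\mathbb{R}^{\supp_{0} N}$ by thinness. For the inclusion $\Ds(L)\times\Ds(N)\subseteq\,$facet, I would glue $\bv_L\in\Ds(L)$ and $\bv_N\in\Ds(N)$ into $\bv\in\mathbb{R}^{\supp_{0} M}$ and observe that for any submodule $M'\subseteq M$ the pair $L':=M'\cap L\subseteq L$ and $N':=(M'+L)/L\subseteq N$ satisfies $[M']=[L']+[N']$, so $\ip{\bv,[M']}=\ip{\bv_L,[L']}+\ip{\bv_N,[N']}\leqslant 0$, while $\ip{\bv,[L]}=\ip{\bv,[N]}=0$, placing $\bv$ on the facet. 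For the reverse inclusion, I would split a facet point as $\bv=(\bv_L,\bv_N)$ and check each half: any submodule $L'\subseteq L$ is a submodule of $M$, giving $\ip{\bv_L,[L']}=\ip{\bv,[L']}\leqslant 0$; and any submodule $N'\subseteq N$ lifts to $M':=\pi^{-1}(N')\subseteq M$ with $[M']=[L]+[N']$, whence $\ip{\bv_N,[N']}=\ip{\bv,[M']}-\ip{\bv,[L]}\leqslant 0$.

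The two inclusions are essentially bookkeeping once the disjoint-supports decomposition is in place, so I do not expect them to be the main obstacle. The real subtlety is in the preliminary codimension count: ensuring that no facet arises from a longer filtration whose extra stability conditions happen to be redundant. It is exactly the thinness of $M$, via linear independence of the classes $[S_i]$, that rules this out and forces a canonical two-step filtration to be associated to each facet.
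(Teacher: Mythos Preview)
Your argument is correct and follows the same outline as the paper's: both use \cite[Lemma~2.6]{Asai} together with Lemma~\ref{lem:filtrations->stability_conditions} to identify facets with two-step filtrations having indecomposable factors, and then show the resulting face equals $\Ds(L)\times\Ds(N)$. The one genuine difference is in this last identification. The paper invokes part~(2) of Asai's lemma once more to say that $\bv\in\D(M)\cap H$ if and only if $\{L,N\}\subseteq\mathcal{W}_{\bv}$, i.e.\ both $L$ and $N$ are $\bv$-semistable, and then simply observes that semistability of $L$ and $N$ forces that of $M$. Your hands-on verification via $L'=M'\cap L$ and $N'=(M'+L)/L$ reaches the same conclusion without the second appeal to Asai, which is slightly more elementary and makes the role of the disjoint-support decomposition $\mathbb{R}^{\supp_0 M}=\mathbb{R}^{\supp_0 L}\oplus\mathbb{R}^{\supp_0 N}$ explicit. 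Your codimension count is also spelled out in more detail than in the paper, which just asserts the facet--filtration bijection; the one point you might tighten is to note that the face $\D(M)\cap H$ actually attains the full dimension of $H$ (because the $\bv$ produced by Lemma~\ref{lem:filtrations->stability_conditions} lies in its relative interior), so that the linear codimension of $H$ really equals the codimension of the face.
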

\begin{proof}
It follows from \cite[Lemma~2.6]{Asai} and Lemma~\ref{lem:filtrations->stability_conditions}---or even simply the definition of a stability space---that the facets of $\D(M)$ are in bijection with filtrations of $M$ with two factors, both of which are indecomposable. Let $L$ then be an indecomposable submodule of $M$ such that the quotient $N = M/L$ is also indecomposable. If we then let $H = \ker\ip{-, [L]} \cap \ker\ip{-, [N]}$, then the corresponding face of $\D(M)$ is $\D(M) \cap H$. Moreover, we have, for $\bv \in \D(M)$, that $\bv \in \D(M) \cap H$ if and only if $\{L, N\} \subseteq \mathcal{W}_{\bv}$. In other words, $\bv \in \D(M) \cap H$ if and only if $L$ and $N$ are both $\bv$-semistable, which proves that $\D(M) \cap H = \Ds(L) \times \Ds(N)$, since if $L$ and $N$ are both $\bv$-semistable, then so is $M$. 
\end{proof}

\subsection{Order polytopes}\label{sect:order_polytopes}

In this section we explain how the result of the previous section on stability spaces of thin modules recovers a result of Geissinger and Stanley on the facial structure of order polytopes of posets.

First we explain some notions concerning posets. Given a poset $P$ and a subset $L \subseteq P$, we say that $L$ is a \emph{lower set} if whenever $x \leqslant y$ and $y \in L$, then $x \in L$. Similarly, given a subset $U \subseteq P$, we say that $U$ is an \emph{upper set} if whenever $x \leqslant y$ and $x \in U$, then $y \in U$. 
A \emph{covering relation} of $P$ is a pair $(x,y) \in P \times P$ such that $x < y$ and for any $z$ with $x \leqslant z \leqslant y$, we have that either $z = x$ or $z = y$. If $(x,y)$ is a covering relation of $P$, then we write $x \lessdot y$. The \emph{Hasse diagram} of $P$ is the directed graph which has the elements of $P$ as its vertices, with an arrow $y \to x$ for every covering relation $x \lessdot y$. A poset $P$ is \emph{connected} if its Hasse diagram is connected.

Let $P$ be a connected poset. Given a partition $\pi = (P_{1}, \dots, P_{n})$ of $P$, one may define a relation $\leqslant_{\pi}$ on $P_{1}, \dots, P_{n}$ by $P_{i} \leqslant_{\pi} P_{j}$ if and only if there exists $p_{i} \in P_{i}$ and $p_{j} \in P_{j}$ such that $p_{i} \leqslant p_{j}$ in $P$. This is the quotient relation associated to the partition $\pi$, see \cite[Section~5]{W21}. Following \cite{Stanley86}, we call a partition $\pi = (P_{1}, \dots, P_{n})$ of $P$ \emph{compatible} if the transitive closure of the quotient relation $\leqslant_{\pi}$ is a partial order. We further call $\pi$ \emph{connected} if $P_{i}$ is connected as a subposet of $P$ for all $i$.

The Hasse diagram of $P$ gives a quiver. This quiver has a representation $M_{P}$ given by a one-dimensional vector space at every vertex, and every arrow corresponding to the identity map. It is clear that submodules of $M_{P}$ correspond precisely to lower sets of $P$ and factor modules correspond precisely to upper sets of $P$. 

\begin{proposition}\label{prop:filt_part}
Filtrations of $M_{P}$ with indecomposable factors are equivalent to connected compatible partitions of $P$. Namely, given a partition $\pi = (P_{1}, \dots, P_{n})$, the corresponding filtration has factors $M_{P_{1}}$, $\dots, M_{P_{n}}$.
\end{proposition}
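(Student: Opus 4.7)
The plan is to construct the correspondence explicitly, exploiting the identification (noted just before the proposition) between submodules of $M_P$ and lower sets of $P$, and between factor modules and upper sets.

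First I would treat the forward direction. Given a filtration
\[0 = M_0 \subset M_1 \subset \dots \subset M_n = M_P\]
with indecomposable factors, let $L_k \subseteq P$ be the lower set to which $M_k$ corresponds, so $\emptyset = L_0 \subset L_1 \subset \dots \subset L_n = P$. Then the sets $P_k := L_k \setminus L_{k-1}$ form a partition of $P$, and the factor $M_k/M_{k-1}$ is canonically isomorphic to the thin representation $M_{P_k}$ on the Hasse subquiver supported at $P_k$. I would verify compatibility directly: if $P_i \leqslant_\pi P_j$ is witnessed by some $p_i \leqslant p_j$ with $p_i \in P_i$ and $p_j \in P_j$, then $p_i \in L_j$ because $L_j$ is a lower set, while $p_i \notin L_{i-1}$, which forces $i \leqslant j$. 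Consequently the filtration indexing is a linear extension of $\leqslant_\pi$, so the transitive closure of $\leqslant_\pi$ is antisymmetric and hence a partial order. For connectedness of each $P_k$, one notes that $M_{P_k}$ is indecomposable exactly when the induced subgraph of the Hasse diagram of $P$ on $P_k$ is connected; using the compatibility just established, any $z \in P \setminus P_k$ lying strictly between two elements of $P_k$ would create a $\leqslant_\pi$-cycle, so this coincides with the Hasse diagram of $P_k$ as a subposet being connected.

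For the reverse direction, starting from a connected compatible partition $\pi = (P_1, \dots, P_n)$, I would choose a linear extension of the partial order which is the transitive closure of $\leqslant_\pi$, and after relabelling assume $P_i \leqslant_\pi P_j$ implies $i \leqslant j$. Setting $L_k := P_1 \cup \dots \cup P_k$, I would check that $L_k$ is a lower set of $P$: if $y \leqslant x$ with $x \in L_k$, say $x \in P_j$ for some $j \leqslant k$ and $y \in P_l$, then $P_l \leqslant_\pi P_j$, so $l \leqslant j \leqslant k$ and $y \in L_k$. The resulting chain of lower sets translates into a filtration $0 \subset M_1 \subset \dots \subset M_n = M_P$ whose factors $M_k/M_{k-1}$ are exactly $M_{P_k}$, each indecomposable by the connectedness of $P_k$.

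The two constructions are mutually inverse at the level of the data stated in the proposition, which gives the claimed equivalence. No step is a genuine obstacle; the only subtle point is the interplay between the ordered nature of a filtration and the unordered data of a partition, and this is handled by the observation that compatibility of $\pi$ is precisely the assertion that a suitable linear extension of its parts exists.
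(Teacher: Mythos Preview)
Your proof is correct and follows essentially the same route as the paper's: both directions hinge on the identification of submodules with lower sets, and the key observation that compatibility of $\pi$ is equivalent to the existence of a linear extension of the parts, which is exactly what a filtration provides. The paper presents the two directions in the opposite order and phrases the partition-to-filtration step as an induction (repeatedly peeling off a $\leqslant_\pi$-minimal block) rather than choosing a linear extension up front, but these are the same argument; if anything, your treatment of connectedness is slightly more careful in distinguishing the induced subgraph of the Hasse diagram from the Hasse diagram of the subposet and noting that compatibility makes them coincide.
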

\begin{proof}
We show that a connected compatible partition gives a filtration with indecomposable factors and \textit{vice versa}. Let $\pi = (P_{1}, \dots, P_{n})$ be a connected compatible partition of $P$. Since the transitive closure quotient relation $\leqslant_{\pi}$ is a partial order, there exists a $P_{i}$ which is minimal with respect to the transitive closure of $\leqslant_{\pi}$. This means that for all $p \in P \setminus P_{i}$ and for all $p_{i} \in P_{i}$, we do not have $p \leqslant p_{i}$. Hence $P_{i}$ is a lower set of $P$, and so $M_{P_{i}}$ is a submodule of $M_{P}$. We may then quotient $M_{P}$ by $M_{P_{i}}$ and inductively build up a filtration of $M_{P}$ with factors $M_{P_{1}},$ $\dots$, $M_{P_{n}}$. These factors are indecomposable since $\pi$ is a connected partition.

Now suppose that we have a filtration of $M_{P}$ with indecomposable factors $M_{P_{1}}, \dots,$ $M_{P_{n}}$. We claim that $\pi = (P_{1}, \dots, P_{n})$ is a connected compatible partition of $P$. It is clear that it is a partition; and it is connected because the factors $M_{P_{1}}, \dots, M_{P_{n}}$ are indecomposable. Suppose, without loss of generality, that the filtration of $M_{P}$ is \[0 = L_{0} \subset L_{1} \subset \dots \subset L_{n} = M_{P}\] where $L_{k}/L_{k - 1} \cong M_{P_{k}}$. We claim that the transitive closure of the quotient relation $\leqslant_{\pi}$ is a subrelation of the total order where $P_{i} < P_{j}$ if and only if $i < j$. For this, it suffices to show that $\leqslant_{\pi}$ is itself a subrelation of this total order. Suppose that $P_{i} <_{\pi} P_{j}$. This means that we cannot have $j < i$, otherwise $M_{P_{j}}$ would not be a submodule of $M_{P}/L_{j - 1}$. Hence $i < j$, and so $\leqslant_{\pi}$ is a subrelation of a total order, which implies that its transitive closure is a partial order, and so $\pi$ is compatible.
\end{proof}

Since $M_{P}$ is a thin representation, all filtrations of $M_{P}$ with indecomposable factors are filtrations by stables under some stability condition by Lemma~\ref{lem:filtrations->stability_conditions}, and so we obtain the following corollary.

\begin{corollary}\label{cor:order_poly}
The faces of $\D(M_{P})$ correspond to connected compatible partitions of $P$.
\end{corollary}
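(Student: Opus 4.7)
The plan is to chain together the three pieces already in place: Asai's description of faces, the lifting of filtrations to stability conditions in the thin case, and the combinatorial translation of Proposition~\ref{prop:filt_part}. Concretely, by \cite[Lemma~2.6]{Asai}, the faces of $\D(M_P)$ are in bijection with the collections $\simp_{\bv}M_P$ as $\bv$ ranges over $\D(M_P)$; equivalently, with filtrations of $M_P$ whose successive quotients are $\bv$-stable bricks for some common $\bv$.

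Next I would use that $M_P$ is thin to identify these with \emph{all} filtrations of $M_P$ having indecomposable factors. One inclusion is immediate, since $\bv$-stable modules are bricks and hence indecomposable. For the converse inclusion, Lemma~\ref{lem:filtrations->stability_conditions} supplies, for any filtration of $M_P$ with indecomposable factors, a stability condition $\bv$ making those factors simultaneously $\bv$-stable; so every such filtration arises as a filtration by $\bv$-stables. Thus the faces of $\D(M_P)$ are in bijection with filtrations of $M_P$ by indecomposables.

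Finally I would invoke Proposition~\ref{prop:filt_part}, which sets up an explicit bijection between filtrations of $M_P$ with indecomposable factors and connected compatible partitions $\pi = (P_1,\dots,P_n)$ of $P$ (where the factor $M_{P_i}$ records the $i$-th block). Composing these three bijections gives the claimed correspondence between faces of $\D(M_P)$ and connected compatible partitions of $P$.

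I do not foresee a significant obstacle; the only point requiring care is making sure the bijection of \cite[Lemma~2.6]{Asai} is read correctly (faces are indexed by $\simp_{\bv}M_P$, not by the vectors $\bv$ themselves, and two stability conditions determine the same face exactly when they pick out the same brick filtration up to reordering of the factors, which here matches the intrinsic data of a partition). Once that compatibility is noted, the corollary follows formally.
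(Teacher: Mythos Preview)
Your proposal is correct and follows essentially the same route as the paper: the paper deduces the corollary directly from Proposition~\ref{prop:filt_part} together with Lemma~\ref{lem:filtrations->stability_conditions} and \cite[Lemma~2.6]{Asai}, exactly as you outline. Your additional remark about faces being indexed by the sets $\simp_{\bv}M_P$ rather than by ordered filtrations is a useful clarification that the paper leaves implicit.
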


The finest non-trivial connected compatible partitions of $P$ are where all elements of the partition are singletons, bar one, which corresponds to a covering relation. This implies the following corollary, which just reinterprets Proposition~\ref{prop:stability space of thin modules} in the language of posets.

\begin{corollary}\label{cor:cov_rel}
\[\D(M_{P}) = \pspan{e_{y} - e_{x} \mid y \text{ covers } x \text{ in }P}.\]
\end{corollary}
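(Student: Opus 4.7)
The plan is to observe that this corollary is a direct translation of Proposition~\ref{prop:stability space of thin modules} once one identifies the quiver and the poset data. Recall that $M_P$ is defined as the thin representation of the Hasse diagram of $P$, whose vertices are the elements of $P$ and whose arrows are precisely the pairs $y \to x$ for each covering relation $x \lessdot y$. Moreover, $M_P$ is sincere by construction, and the Hasse diagram is acyclic since $P$ is a poset. Hence Proposition~\ref{prop:stability space of thin modules} applies and gives
\[
\D(M_P) = \pspan{e_i - e_j \mid i \to j \text{ is an arrow of the Hasse diagram of } P}.
\]
Translating the arrow $i \to j$ back into the covering relation $j \lessdot i$ yields the desired description.

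Alternatively, and in the spirit of the preceding subsection, one could derive the statement from Corollary~\ref{cor:order_poly}. The rays of $\D(M_P)$ are the one-dimensional faces, which by Corollary~\ref{cor:order_poly} correspond to the coarsest non-trivial connected compatible partitions of $P$ that cannot be coarsened further without becoming the trivial partition $\{P\}$; dually, they are the finest non-trivial connected compatible partitions above the discrete partition into singletons. Such a partition must consist of all singletons except for one block of size at least two, and connectedness forces that block to be $\{x,y\}$ with $y$ covering $x$ (any larger connected subset would admit a refinement into two nonempty connected compatible pieces). The filtration associated to such a partition has all simples as factors except for a single length-two indecomposable corresponding to the edge $y \to x$, whose stability ray is spanned by $e_y - e_x$, exactly matching the generator listed.

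I would present the short first argument as the main proof, since Proposition~\ref{prop:stability space of thin modules} has already done the work; there is no real obstacle here beyond getting the orientation convention straight, namely that a Hasse arrow $y \to x$ records the covering $x \lessdot y$ and so contributes the ray vector $e_y - e_x$ rather than $e_x - e_y$.
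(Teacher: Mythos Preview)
Your proposal is correct and mirrors the paper's own treatment: the paper states explicitly that this corollary ``just reinterprets Proposition~\ref{prop:stability space of thin modules} in the language of posets,'' which is precisely your first argument, and the sentence preceding the corollary in the paper also notes the characterisation via finest non-trivial connected compatible partitions that you give as your alternative derivation. There is nothing to add.
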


Corollary~\ref{cor:order_poly} recovers the description of the facial structure of order polytopes given by Geissinger and Stanley, which says that the faces of the order polytope $\mathcal{O}(P)$ of a poset $P$ are in bijection with connected compatible partitions of $P$ \cite{Geissinger81,Stanley86}. Corollary~\ref{cor:cov_rel} gives that $\D(M_{P})$ is the dual of the cone $\dual{\D}(M_{P})$ which consists of the points $\bx$ such that $x_{i} \leqslant x_{j}$ for $i \lessdot j$ in $P$. The cone $\dual{\D}(M_{P})$ was called the \emph{monotone polyhedron} of $P$ in \cite{K06}. It was used to define the order polytope of $P$ in \cite{Geissinger81,Stanley86}. Let us extend $P$ to $\hat{P} = \{\hat{0}, \hat{1}\}$, where $\hat{0} \leqslant p$ and $p \leqslant \hat{1}$ for all $p \in P$. Then the \emph{order polytope} $\mathcal{O}(P)$ of $P$ is defined to be the intersection of $\dual{\D}(M_{\hat{P}})$ with the hyperplanes $x_{\hat{0}} = 0$ and $x_{\hat{1}} = 1$. The cone $D(M_{P})$ therefore has the same face lattice as the order polytope $\mathcal{O}(P)$, which explains how Corollary~\ref{cor:order_poly} recovers the description of the faces of the order polytope from Geissinger and Stanley \cite{Geissinger81,Stanley86}.

\subsection{Microeconomic interpretation}

Corollary~\ref{cor:cov_rel} has a natural microeconomic interpretation. Here we will interpret a poset as a set of preferences, and Corollary~\ref{cor:cov_rel} tells us how one may describe the set of advantageous trades for an agent with those preferences.

Consider a set of $n$ goods. We denominate these goods in units such that the goods trade on the market at a 1:1 ratio. That such units exist follows from the assumption of absence of arbitrage. For instance, let us suppose that goods $a$ and $b$ trade on the market at a ratio of 1:1. Consider an additional good $c$, which we denominate in units such that $b$ and $c$ trade at 1:1. If $a$ and $c$ do not then trade at a ratio of 1:1, then there is a possible arbitrage given by first trading $a$ with $b$ and then trading $b$ with $c$.

Now consider an agent whose preferences for the $n$ goods are structured according to a poset $P$. That is, if $a < b$ in $P$, then the agent prefers a unit of good $b$ to a unit of good $a$. In this set-up, let a vector $\mathbf{v} \in \mathbb{R}^{n}$ denote a net amount of each of the respective $n$ goods. A vector $\mathbf{v}$ such that $\sum_{i}v_{i}=0$ then represents a possible trade for our agent, since all the net positions of the goods sum to zero, recalling that the goods trade at a ratio of 1:1.

In this situation, we consider which trades $\mathbf{v}$ leave the agent strictly better off given their poset of preferences $P$. Consider splitting the poset $P$ into two sets $L$ and $U$ such that for every $l \in L$ and $u \in U$, we have that $l \leqslant u$, that is, $u$ is preferred by the agent to $l$. Note that this implies that $L$ must be a lower set and $U$ must be an upper set. It is clear, then, that if a trade $\mathbf{v}$ leaves the agent strictly better off, we must have \[\ip{ \mathbf{v}, L } \leqslant 0 \text{ and } \ip{ \mathbf{v}, U } \geqslant 0.\] If this is not the case, then the preferences of the agent are not satisfied with respect to $U$ and $L$. The agent always prefers goods in $U$ to goods in $L$, so cannot be satisfied if, on balance, they trade goods in $U$ for goods in $L$.

On the other hand, if we have that \[\ip{ \mathbf{v}, L } \leqslant 0 \text{ and } \ip{ \mathbf{v}, U } \geqslant 0\] for every such $U$ and $L$, then the trade satisfies the agent in every possible way. It can be seen that this is equivalent to $M_{P}$ being $\bv$-semistable. Hence, the vectors which make $M_{P}$ semistable correspond precisely to the trades which leave the agent strictly better off. We call such trades \emph{advantageous}.

Corollary~\ref{cor:cov_rel} tells us that the vectors which make $M_{P}$ semistable are precisely those which are sums of $e_{y} - e_{x}$, where $x \lessdot y$ is a covering relation of the poset. Hence we obtain the following result.

\begin{proposition}
The advantageous trades of an agent are precisely the trades which can be reached by a finite number of trades of pairs of goods in which the agent trades a good for one which they prefer.
\end{proposition}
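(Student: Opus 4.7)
The plan is to combine the microeconomic setup established in the preceding paragraphs with Corollary~\ref{cor:cov_rel}, since essentially all of the conceptual work has already been done. The proposition is an almost immediate translation, but I would want to spell out both directions explicitly and clarify what "finite number of trades" means in terms of non-negative linear combinations.

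First, I would recall the identification already made in the text: a trade $\bv \in \mathbb{R}^{n}$ (with $\sum_{i} v_{i} = 0$) is advantageous for the agent exactly when $\ip{\bv, L} \leqslant 0$ and $\ip{\bv, U} \geqslant 0$ for every decomposition $P = L \sqcup U$ with $L$ a lower set and $U$ an upper set, and that this condition is equivalent to $M_{P}$ being $\bv$-semistable, i.e.\ $\bv \in \D(M_{P})$. This gives advantageous trades $= \D(M_{P})$.

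Next, for the forward direction, I would take an advantageous trade $\bv \in \D(M_{P})$ and apply Corollary~\ref{cor:cov_rel} to write
\[
\bv = \sum_{x \lessdot y} \lambda_{xy}(e_{y} - e_{x}), \qquad \lambda_{xy} \geqslant 0.
\]
Each summand $\lambda_{xy}(e_{y} - e_{x})$ represents trading $\lambda_{xy}$ units of the good $x$ for $\lambda_{xy}$ units of the good $y$; since $x \lessdot y$ in $P$ in particular gives $x < y$, the agent strictly prefers $y$ to $x$, so each summand is a trade of the allowed form. Only finitely many coefficients are nonzero because $P$ is finite.

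For the reverse direction, suppose $\bv$ is a finite non-negative combination $\bv = \sum_{k} \mu_{k}(e_{y_{k}} - e_{x_{k}})$ with $x_{k} < y_{k}$ in $P$. The only subtlety is that the listed generators in Corollary~\ref{cor:cov_rel} use covering relations, not arbitrary strict comparisons, so I would note that for any $x < y$ one can choose a saturated chain $x = z_{0} \lessdot z_{1} \lessdot \cdots \lessdot z_{m} = y$ and telescope
\[
e_{y} - e_{x} = \sum_{i=1}^{m}(e_{z_{i}} - e_{z_{i-1}}),
\]
so each elementary trade already lies in $\pspan{e_{y} - e_{x} \mid x \lessdot y} = \D(M_{P})$. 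Hence $\bv \in \D(M_{P})$ and is advantageous. I do not expect any genuine obstacle: the proof is just two short paragraphs invoking Corollary~\ref{cor:cov_rel}, with the telescoping observation being the only non-formal step.
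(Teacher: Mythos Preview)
Your proposal is correct and follows the same approach as the paper: the paper simply states ``Hence we obtain the following result'' after invoking Corollary~\ref{cor:cov_rel}, so the entire argument is left implicit there. Your write-up is in fact more careful than the paper's, since you make the telescoping step for arbitrary relations $x < y$ explicit, whereas the paper leaves that detail to the reader.
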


This result is interesting because it gives us two ways of describing the advantageous trades of the agent. On the one hand, we have the definition of them corresponding to semistability, which gives us an intrinsic criterion for recognising whether a vector is an advantageous trade. On the other hand, we have the description as combinations of trades of a good for a preferable one. This way allows us to construct all advantageous trades. Another way of describing the result is that the agent never needs to make a bad trade (or even a neutral one) to reach a desirable end state. All advantageous trades can be reached by trading pairs of goods which leave the agent strictly better off.


\section{Stability spaces of modules}\label{sec:modules}

In this section, we show how one may use Proposition~\ref{prop:stability space of thin modules} to compute the stability spaces of string and band modules as non-negative linear spans. In the next section, we will apply these results to describe how the stability space of a band module is the limit of stability spaces of string modules.

\subsection{String modules}

We now suppose that we have a string module $M(w)$, which possibly has repeated composition factors. 
We furthermore assume that $A$ is a special biserial $K$-algebra, where $K$ is now an algebraically closed field, so that all the submodules of $M$ are either string or band modules.
Let $\thin{w}$ be the thin representation corresponding to $w$. Namely, $\thin{w}$ is the representation of the quiver $Q_{w}$ which has the same shape as $w$ with a copy of the field $K$ at every vertex and identity maps at every arrow.
The aim of this subsection is to prove that one may compute the stability space of $M(w)$ from the stability space of $\thin{w}$; we state this precisely after introducing some helpful notation.

\begin{notation}\label{not: iota map}
For a string module $M(w)$ with $\dimu M(w) = (d_1,\dots, d_n)$, we set $d:= \sum_{i = 1}^n d_i$ so that $\mathcal{D}(\thin{w})$ is a cone in $\mathbb{R}^d = \lspan{ e_i^j \mid 1 \leqslant i \leqslant n, 1 \leqslant j \leqslant d_i }$ and $\mathcal{D}(M(w))$ is a cone in $\mathbb{R}^n = \lspan{ e_i \mid 1 \leqslant i \leqslant n }$. We define the linear map
\begin{align*}
\iota\colon  \mathbb{R}^n & \longrightarrow \mathbb{R}^d \\  e_i  &\longmapsto \sum^{d_i}_{j=1} e_i^j.
\end{align*}
We also introduce notation for the linear subspace where coordinates corresponding to repeated composition factors of $M(w)$ are equal, \[\mathcal{R} := \left\{\bx \in \mathbb{R}^{d} \,\middle|\, \parbox{4.6cm}{\centering $\ip{\bx, e_{i}^{j}} = \ip{\bx, e_{i}^{k}}$,\\ $1 \leqslant i \leqslant n, 1 \leqslant j \leqslant k \leqslant d_{i}$}\right\}.\]
We also use the same notation in the case where $M$ is a band module $M(b, \lambda, r)$.
\end{notation}

\begin{theorem}\label{thm:big-small}
Let $M(w)$ be a string $A$-module over a special biserial $K$-algebra $A$ and let $\thin{w}$ be the thin representation corresponding to the string $w$.
Then \[\iota(\mathcal{D}(M(w))) = \mathcal{D}(\thin{w}) \cap \mathcal{R}.\]
\end{theorem}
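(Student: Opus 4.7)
The plan is to verify the two inclusions separately, reducing both to a single bookkeeping identity. Since $\iota$ carries $\mathbb{R}^{n}$ linearly and bijectively onto $\mathcal{R}$ (because $\iota(\bv)$ has coordinates $\iota(\bv)_{(i,j)} = \bv_{i}$, which are constant in $j$, giving exactly the defining condition of $\mathcal{R}$), the condition $\iota(\bv) \in \mathcal{R}$ is automatic for every $\bv$. Hence it suffices to prove that $\iota(\bv) \in \mathcal{D}(\thin{w})$ if and only if $\bv \in \mathcal{D}(M(w))$. The zero condition matches directly, since
\[
\ip{\iota(\bv),[\thin{w}]} = \sum_{i,j} \bv_{i} = \sum_{i} d_{i}\bv_{i} = \ip{\bv,[M(w)]}.
\]

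The bridging observation is that submodules on both sides are controlled by submodule substrings of $w$. A submodule of $\thin{w}$ corresponds to a subset of the vertices of $Q_{w}$ that is closed under arrows; such a subset decomposes uniquely as a disjoint union of intervals $N_{1}\sqcup\cdots\sqcup N_{k}$, each of which is the vertex set of a submodule substring of $w$ at a specific position. A submodule of $M(w)$, by contrast, decomposes as a direct sum $L_{1}\oplus\cdots\oplus L_{k}$ of string modules $L_{i}=M(u_{i})$ arising from submodule substrings $u_{i}$ of $w$ sitting at pairwise disjoint positions; this is a standard consequence of the morphism description of \cite{CB89,Kra91} recalled earlier, and the disjointness of positions is forced because otherwise the summands would share basis vectors inside $M(w)$. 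The key identity, for a single submodule substring $u$ at a specific position, with corresponding submodule $L = M(u) \leq M(w)$ and corresponding interval $N \subseteq Q_{w}$ viewed as a submodule of $\thin{w}$, is
\[
\ip{\iota(\bv),[N]} \;=\; \sum_{(a,j)\in N} \bv_{a} \;=\; \sum_{a}\bigl(\text{multiplicity of } a \text{ in } u\bigr)\cdot \bv_{a} \;=\; \ip{\bv,[L]}.
\]

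For the forward inclusion, given $\bv \in \mathcal{D}(M(w))$ and a submodule $N = N_{1}\sqcup\cdots\sqcup N_{k}$ of $\thin{w}$, each piece $N_{i}$ yields a submodule $L_{i}$ of $M(w)$ with $\ip{\bv,[L_{i}]} \leqslant 0$, and summing the identity over $i$ gives $\ip{\iota(\bv),[N]} \leqslant 0$, as required. For the reverse inclusion, given $\iota(\bv) \in \mathcal{D}(\thin{w}) \cap \mathcal{R}$ and an arbitrary submodule $L = L_{1}\oplus\cdots\oplus L_{k}$ of $M(w)$, the chosen positions give pairwise disjoint intervals $N_{i}\subseteq Q_{w}$ whose disjoint union $N$ is a submodule of $\thin{w}$, and summing the identity in reverse yields $\ip{\bv,[L]} = \ip{\iota(\bv),[N]} \leqslant 0$.

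The main obstacle is not the linear-algebra bookkeeping but the structural input used in the reverse inclusion, namely that every submodule of $M(w)$ decomposes as a direct sum of submodule-substring modules sitting at pairwise disjoint positions in $w$. Extracting this from the morphism basis of \cite{CB89,Kra91} under the special biserial hypothesis, and keeping careful track of positions (as opposed to isomorphism classes) when the same substring appears multiple times in $w$, is the delicate point; it is precisely this tracking that makes the subspace $\mathcal{R}$ relevant, since only on $\mathcal{R}$ do the dimension vectors of submodules of $\thin{w}$ transport correctly to those of submodules of $M(w)$.
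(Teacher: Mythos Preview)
Your argument has a genuine gap at precisely the point you flag as delicate. The structural claim that an arbitrary submodule $L$ of $M(w)$ decomposes as a direct sum of string modules $M(u_{i})$ sitting at pairwise disjoint positions in $w$ is \emph{false}, not merely hard to extract from the Crawley-Boevey--Krause morphism basis. Take the Kronecker quiver $1\rightrightarrows 2$ and the string module $M(w)$ with $\dimu M(w)=(1,2)$; the one-dimensional subspace spanned by the sum of the two basis vectors at vertex $2$ is a submodule isomorphic to $S_{2}$, but it does not sit at either of the two positions of vertex $2$ in $w$---it is \emph{diagonally embedded}. More generally, submodules of $M(w)$ need not be direct sums of string modules at all: over a special biserial algebra the indecomposable summands may be band modules.

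What the paper actually proves, and what suffices, is weaker: for every submodule $L$ of $M(w)$ there exists a (typically non-isomorphic) submodule $\tilde{L}$ with $\dimu\tilde{L}=\dimu L$ which \emph{is} a direct sum of string modules at disjoint positions (Proposition~\ref{prop: nondiagonal same dim}). Since the defining inequalities for $\mathcal{D}(M(w))$ depend only on dimension vectors of submodules, this is enough to match the inequality sets on the two sides. Establishing this replacement requires real work: one writes the embedding $L\hookrightarrow M(w)$ in the Crawley-Boevey--Krause basis, prunes to a covering family of factor/submodule substrings without triple overlaps, and then inductively resolves the remaining overlaps (Lemmas~\ref{lem:string_overlap}--\ref{lem: nondiagonal string sub}); the band-submodule case needs a separate argument (Lemmas~\ref{lem:band_overlap}--\ref{lem: nondiagonal band sub}). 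Your linear-algebra bookkeeping and the forward inclusion are fine, but the reverse inclusion cannot proceed without this ``de-diagonalisation'' step, which is the substantive content of the theorem.
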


Before proving Theorem~\ref{thm:big-small}, we show a series of preliminary results. For this we note that every submodule of $\thin{w}$ defines a submodule of $M(w)$, but not every submodule of $M(w)$ arises from a submodule of $\thin{w}$. We call such submodules \emph{diagonally embedded}. Alternatively, a submodule $L$ of $M(w)$ is diagonally embedded if there is a component of the morphism of representations $L \hookrightarrow M(w)$ which is a matrix with a column containing more than one non-zero entry. One can likewise consider vector subspaces of $M(w)$ as diagonally embedded or not diagonally embedded. We give an example of a diagonally embedded submodule.

\begin{example}
Consider the Kronecker algebra, the path algebra of $1 \rightrightarrows 2$. We consider the module $\tcs{1\\2\:2}$. The submodule \[\tcs{2} \xrightarrow{\sbm{1\\1}}  \tcs{1\\2\:2}\] is diagonally embedded. It can be seen that this submodule does not arise from a submodule of the thin representation of the quiver $2 \leftarrow 1 \rightarrow 2'$. Note that whether or not a given submodule is considered to be diagonally embedded depends upon a choice of basis for $K^{2}$ in our representation $K \rightrightarrows K^{2}$. Such a basis is implicitly fixed when we view this representation as a string module.
\end{example}

In order to prove Theorem~\ref{thm:big-small}, we need to show that, for every submodule $L$ of $M(w)$, there exists a non-diagonally embedded submodule $L'$ such that $\dimu L = \dimu L'$.
This is achieved in the following proposition.
We will use this to show that the defining inequalities of $\iota(\D(M(w)))$ are the same as those of $\mathcal{D}(\thin{w}) \cap \mathcal{R}$.

\begin{proposition}\label{prop: nondiagonal same dim}
Let $M$ be a string module over a special biserial $K$-algebra $A$, and $L$ a submodule of $M$.
Then there exists a submodule $\tilde{L}$ of $M$ such that $\tilde{L}$ is a direct sum of string modules, $\dimu L = \dimu\tilde{L}$, and $\tilde{L}$ does not embed into $M$ diagonally.
\end{proposition}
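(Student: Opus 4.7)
The plan is to construct $\tilde{L}$ explicitly by reducing $L$ at each vertex to a coordinate subspace via reduced row echelon form with respect to the natural walk order. For each vertex $e \in Q_0$, the space $M(w)_e$ has a basis indexed by the positions of $w$ at $e$; I order these positions by their index in the walk $w$, and let $S_e$ be the set of pivot positions in the reduced row echelon form of $L_e$ with respect to this order. Setting $\tilde{L}_e := \lspan{p \mid p \in S_e}$ and $\tilde{L} := \bigoplus_e \tilde{L}_e$, by construction each $\tilde{L}_e$ is a coordinate subspace of $M(w)_e$, so $\tilde{L}$ is non-diagonally embedded, and $\dim \tilde{L}_e = |S_e| = \dim L_e$, giving $\dimu \tilde{L} = \dimu L$.

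The substantive step is verifying that $\tilde{L}$ is a submodule, i.e.\ $\phi_\alpha(\tilde{L}_e) \subseteq \tilde{L}_{e'}$ for each arrow $\alpha \colon e \to e'$ in $Q_1$. This reduces to showing that whenever $p \in S_e$ and $\phi_\alpha(p) = q \neq 0$, we have $q \in S_{e'}$. The reduced-form pivot at $p$ provides a basis vector $b_p \in L_e$ of the form $b_p = p + \sum_{k > p,\, p_k \text{ at } e} c_{pk}\, p_k$, so that $\phi_\alpha(b_p) = q + \sum_k c_{pk}\, \phi_\alpha(p_k) \in L_{e'}$. The main obstacle is showing that $q$ survives as the pivot of $\phi_\alpha(b_p)$: that no summand $\phi_\alpha(p_k)$ for $k > p$ lands at a position $\leq q$ in the walk. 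Since $\phi_\alpha(p_k) \in \{0, p_{k-1}, p_{k+1}\}$, this is exactly where the walk condition (no two consecutive arrows of $w$ are formal inverses of one another) enters: in the case $q = p + 1$ (i.e.\ $\alpha_p = \alpha$), the only way some $\phi_\alpha(p_k)$ with $k > p$ could contribute to position $q$ would require $k = p+2$ and $\alpha_{p+1} = \bar{\alpha}$, but then $\alpha_p = \alpha$ and $\alpha_{p+1} = \bar{\alpha}$ would be consecutive inverses, contradicting the walk condition. The case $q = p - 1$ is analogous but even more direct.

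Therefore $S := \bigcup_e S_e$ is an arrow-closed subset of the string quiver $Q_w$. Since $Q_w$ has underlying graph a path, $S$ decomposes into its connected components, each an interval of $w$ corresponding to some substring $u_i$, and hence $\tilde{L} = \bigoplus_i M(u_i)$ is a direct sum of string modules, completing the proof. I note that this argument operates purely at the level of dimension vectors and the combinatorics of $w$, so it does not need to distinguish whether the indecomposable summands of $L$ are string or band modules --- even band summands of $L$ get replaced, en masse via the row-reduction, by a direct sum of string modules of the same total dimension vector.
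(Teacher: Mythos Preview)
Your proof is correct, and it is a genuinely different argument from the paper's. The paper expands the inclusion $L \hookrightarrow M(w)$ in the Crawley--Boevey/Krause basis of $\homa$, then performs an inductive combinatorial procedure on the overlapping factor/submodule substrings $\sigma_j$, treating string and band summands of $L$ separately (Lemmas~\ref{lem:string_overlap}--\ref{lem: nondiagonal band sub}). You bypass that machinery entirely: working vertexwise, you replace $L_e$ by the coordinate subspace on its pivot positions with respect to the walk order, and the single combinatorial ingredient you need is that for a walk $w$ the map $\phi_\alpha$ on basis vectors is a partial shift by $\pm 1$ which, by the condition $\alpha_{i+1}\neq\overline{\alpha_i}$, never moves a later basis vector onto or before the image of the pivot. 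Concretely, if $q=p+1$ then $\phi_\alpha(e_k)=e_{k-1}$ would force $\alpha_k=\bar\alpha$, and $k\in\{p+1,p+2\}$ is excluded by $\alpha_{p+1}=\alpha$ together with the walk condition $\alpha_{p+2}\neq\bar\alpha$; hence the leading coefficient at $q$ is exactly $1$ and all earlier coefficients vanish, so $q$ is a pivot for $L_{e'}$. The resulting set $S$ of pivot positions is closed under outgoing arrows in $Q_w$, so its connected components are submodule substrings and $\tilde L$ is their direct sum. Your approach is shorter, uses only elementary linear algebra, and handles string and band summands of $L$ uniformly; the paper's argument, by contrast, makes the connection to the substring combinatorics of \cite{CB89,Kra91} explicit, which is what is reused later in the analogous statement for band ambient modules (Proposition~\ref{prop: nondiagonal same dim band}).
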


To prove Proposition~\ref{prop: nondiagonal same dim}, we will use the description of the morphisms between string modules from \cite{CB89,Kra91}, as outlined in Section~\ref{sec:background}. We split into cases depending upon whether we have a string submodule or a band submodule.

\subsubsection{String submodules of string modules}

We begin by proving the following lem\-ma on string submodules of string modules, which will help us to find non-diagonally embedded submodules.

\begin{lemma}\label{lem:string_overlap}
Let $L =M(v)$ and $M = M(w)$ be string modules with $f \colon L \hookrightarrow M$ a monomorphism. 
We may write $m$ as a linear sum $\sum_{j \in J} \lambda_j f_{\sigma_j}$ for some finite set $J$ and $\lambda_{j} \neq 0$, where the map $f_{\sigma_j}$ is induced by $\sigma_j$, with $\sigma_j$ a factor substring of $v$ and submodule substring of $w$. 
Then the following hold:
\begin{enumerate}
\item For every decomposition of $v$ of the form $v=v'xv''$ with $x \in Q_0 \cup Q_1 \cup \overline{Q}_1 $ there exists  $j\in J$ such that $x$ is a subword of  $\sigma_{j}$. In this case we say that $\sigma_j$ \emph{covers} $x$.\label{op:string_overlap:cover}
\item Let $e$ and $e'$ be two consecutive vertices of $v$. Then, for all $j, j' \in J$ such that $\sigma_j$ and $\sigma_{j'}$ cover $e$ and $e'$ respectively, the strings $\sigma_{j}$ and $ \sigma_{j'}$ have a non-empty common subword $\tau$.\label{op:string_overlap:common}
\item Let $j,j' \in J$ be such that $\sigma_j$ and $\sigma_{j'}$ have a common subword $\tau$.
If  $\sigma_j$ and $\sigma_{j'}$ are disjoint as submodule substrings of $w$ then the $K$-subspace of $L$ induced by $\tau$ is diagonally embedded into $M$.
Moreover, $\tau$ is a factor substring of $v$.\label{op:string_overlap:diag}
\end{enumerate}
\end{lemma}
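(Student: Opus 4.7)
The plan is to treat the three parts in order, leveraging the basis description of $\homa(M(v), M(w))$ from \cite{CB89,Kra91} recalled in Section~\ref{sec:background}; throughout I view each $\sigma_j$ as a positioned substring of $v$, so that "covers" refers to positional containment in $v$. For part~(\ref{op:string_overlap:cover}) in the vertex case I would argue contrapositively: the basis morphism $f_{\sigma_j}$ acts as the identity on summands of $L$ supported at positions traversed by $\sigma_j$ and vanishes on all others, so if no $\sigma_j$ covered the distinguished vertex position of $x$ in $v = v'xv''$, then the restriction of $f = \sum_j \lambda_j f_{\sigma_j}$ to the one-dimensional vertex summand at that position would vanish, contradicting injectivity of $f$. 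For $x$ an arrow or inverse arrow, a similar argument using compatibility of $f$ with the arrow action of $L$, together with the factor-substring condition applied to the $\sigma_j$'s that cover the two endpoints of $x$, handles the case.

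For part~(\ref{op:string_overlap:common}), let $\alpha$ denote the arrow or inverse arrow of $v$ between $e$ and $e'$, and suppose $\sigma_j$ covers $e$ while $\sigma_{j'}$ covers $e'$. If neither contained $\alpha$, then $\alpha$ would sit immediately to the right of $\sigma_j$ in $v$ and immediately to the left of $\sigma_{j'}$. The factor-substring condition applied to $\sigma_j$ would force $\alpha$ to be a direct arrow, while applied to $\sigma_{j'}$ it would force $\alpha$ to be an inverse arrow---a contradiction. Hence at least one of $\sigma_j, \sigma_{j'}$ contains $\alpha$ and so both of $\{e, e'\}$, yielding a non-empty common subword.

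For part~(\ref{op:string_overlap:diag}), I would enlarge $\tau$ to the maximal positioned common subword of $\sigma_j$ and $\sigma_{j'}$ in $v$, i.e.\ their intersection. For the "moreover" clause, I inspect each boundary of $\tau$ in $v$: by maximality of the intersection, at each boundary at least one of $\sigma_j, \sigma_{j'}$ must have its boundary aligned with that of $\tau$, and the factor-substring property of that $\sigma$ transfers directly to $\tau$. For the diagonal embedding, a basis vector of $L$ supported at a vertex of $\tau$ has image under $f$ containing non-zero contributions from both $\lambda_j f_{\sigma_j}$ and $\lambda_{j'} f_{\sigma_{j'}}$; since $\sigma_j$ and $\sigma_{j'}$ are disjoint as submodule substrings of $w$, these contributions land at distinct positions of the same vertex in $M(w)$, so the image is supported on at least two distinct basis vectors of $M(w)$, which is the definition of a diagonal embedding.

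The main obstacle I anticipate lies in part~(\ref{op:string_overlap:diag}): \emph{a priori}, other $\sigma_{j''} \in J$ may also cover $\tau$, and one must verify that their contributions cannot cancel the diagonal nature of the image. This should follow from the linear independence of the basis morphisms $f_{\sigma}$ indexed by distinct positioned substrings of $w$, so that the contributions at the disjoint positions of $\sigma_j$ and $\sigma_{j'}$ in $w$ cannot be reproduced by any combination of other $f_{\sigma_{j''}}$'s. Carrying out this bookkeeping uniformly over every vertex of $\tau$ is the technical crux of the argument.
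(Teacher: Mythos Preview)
Your proposal is correct and follows essentially the same route as the paper. For parts~(\ref{op:string_overlap:cover}) and~(\ref{op:string_overlap:common}) the arguments are virtually identical: the paper also handles the vertex case by observing that $f$ would vanish at an uncovered vertex, and then deduces the arrow case purely from the factor-substring condition (your invocation of ``compatibility with the arrow action'' is not needed---if $\sigma_j$ covers $t(\alpha)$ for a direct arrow $\alpha$ but not $\alpha$ itself, then $v = v'\alpha\sigma_j v'''$ directly contradicts $\sigma_j$ being a factor substring). For the ``moreover'' in part~(\ref{op:string_overlap:diag}) the paper does exactly what you describe: at each boundary of the overlap, one of $\sigma_j,\sigma_{j'}$ has its own boundary there, and the factor-substring condition at that end transfers to~$\tau$.

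One remark on your anticipated obstacle: the paper simply asserts that the subspace of $L$ along $\tau$ is mapped by $f_{\sigma_j}$ and $f_{\sigma_{j'}}$ to two distinct subspaces of $M$ and calls this a diagonal embedding, without addressing the cancellation you worry about. Your concern is legitimate in principle, but note that this clause of~(\ref{op:string_overlap:diag}) is not used in the subsequent arguments (Lemma~\ref{lem: nondiagonal string sub} relies only on parts~(\ref{op:string_overlap:cover}), (\ref{op:string_overlap:common}), and the factor-substring assertion), so you need not pursue the bookkeeping further. Your instinct to enlarge $\tau$ to the maximal common subword is also the implicit reading in the paper: the ``overlap'' terminology introduced immediately after the lemma is what is used throughout.
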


We call the common non-empty subword $\tau$ as in Lemma~\ref{lem:string_overlap}\eqref{op:string_overlap:common} an \emph{overlap} of the strings $\sigma_j$ and $\sigma_{j'}$.

\begin{proof}
First let us consider a decomposition of $v$ of the form  $v=v'xv''$ with $x=e\in Q_0$. 
Suppose that there is no $j \in J$ such that $\sigma_j$ covers $e$. Then we have that $m$ is zero on the vector space at the vertex $e$, which contradicts our hypothesis that $m$ is a monomorphism.
Now we consider the case when $x= \alpha \in Q_1$ is a direct arrow; the case for an inverse arrow will follow dually. 
By the first part of the proof, we know that there exists $j \in J$ such that $\sigma_{j}$ covers $t(\alpha)$. 
Suppose that $\sigma_j$ does not cover $\alpha$. 
Then we have a decomposition \[v= v' \alpha \sigma_j v'''\] which is a contradiction of the fact that $\sigma_j$ is a factor substring of $v$. Thus we have proved~\eqref{op:string_overlap:cover}. We have also proved~\eqref{op:string_overlap:common} since if $\sigma_j$ covers $\alpha$ it also must cover $s(\alpha)$.

To show \eqref{op:string_overlap:diag}, note that the $K$-vector subspace of $L$ generated by $\tau$ is mapped to two different $K$-vector subspaces of $M$.
Hence the $K$-vector subspace of $L$ generated by $\tau$ is diagonally embedded into $M$. 

Finally, let us show that if two strings $\sigma_{j}$ and $\sigma_{j'}$ overlap for $j, j' \in J$, then the overlap $\tau$ is a factor substring of $v$.
Without loss of generality we may assume that there is a decomposition of $\sigma_{j}$ of the form \[\sigma_j = \tau u.\] 
Since $\sigma_{j}$ is a factor substring of $v$ there exists $\alpha, \beta \in Q_1$ such that \[v = v' \overline{\alpha} \sigma_j \beta v'' = v' \overline{\alpha} \tau u \beta v''.\] 
Thus, the arrow $\overline{\alpha}$ to the left of $\tau$ in $v$ is inverse. By a similar argument based on $\sigma_{j'}$,  we deduce that the arrow to the right of $\tau$ in $v$ is direct. Hence $\tau$ is a factor substring of $v$. 
\end{proof}

\begin{lemma}\label{lem:string_mono}
Let $L = M(v)$ and $M = M(w)$ be string modules, with \[f = \sum_{j \in J} f_{\sigma_j}\colon L \to M\] a morphism, where $J$ is a finite set.  Then $f$ is a monomorphism if and only if $\{\sigma_{j} \mid j \in J\}$ covers $v$.
\end{lemma}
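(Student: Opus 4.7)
The plan is to prove the two implications separately.

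\textbf{Forward direction.} Suppose $f$ is a monomorphism. First I would observe that Lemma~\ref{lem:string_overlap}\eqref{op:string_overlap:cover} applies verbatim to the expansion $f = \sum_{j \in J} f_{\sigma_j}$ and asserts that for every decomposition $v = v' x v''$ with $x \in Q_0 \cup Q_1 \cup \overline{Q}_1$, there is a $j \in J$ with $x$ a subword of $\sigma_j$. This is exactly the statement that $\{\sigma_j \mid j \in J\}$ covers $v$.

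\textbf{Reverse direction.} Assume $\{\sigma_j \mid j \in J\}$ covers $v$. To show $f$ is injective, I would argue vertex-by-vertex, using that a morphism of quiver representations is injective if and only if its restriction at each vertex $e \in Q_0$ is injective. Fixing such a vertex, the matrix of $f$ restricted to the vector space at $e$ in $M(v)$ has columns indexed by occurrences of $e$ in $v$, rows indexed by occurrences of $e$ in $w$, and the entry in column $i$, row $p$ counting the number of $\sigma_j$ that cover position $i$ of $v$ and map it to the $p$-th occurrence of $e$ in $w$. The coverage hypothesis guarantees that every column is non-zero.

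The substantive task is then linear independence of these columns. My plan is to process the occurrences of $e$ in $v$ from left to right, arguing inductively. For the leftmost such occurrence $i$, I would pick the $\sigma_j$ covering $i$ whose placement in $w$ is furthest to the left. Because $\sigma_j$ is a factor substring of $v$, its left-hand boundary in $v$ is either the start of $v$ or preceded by an inverse arrow; because $\sigma_j$ is a submodule substring of $w$, its placement in $w$ begins either at the start of $w$ or just after a direct arrow. Combining these constraints with the extremal choice, the position $p_j(i)$ should be unreachable by any $\sigma_{j'}$ covering a later occurrence $i' > i$ of $e$ in $v$, which furnishes a leading non-zero entry in column $i$ that no later column can cancel. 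Iterating the argument then yields full independence.

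The main obstacle will be the careful combinatorial bookkeeping needed to justify that this leftmost contribution cannot be matched by any other column. This should rely on Lemma~\ref{lem:string_overlap}\eqref{op:string_overlap:common} and \eqref{op:string_overlap:diag} to control how overlapping coverings interact, together with the special biserial hypothesis on $A$, which restricts the local configurations of arrows at each vertex and thereby pins down the possible placements of each $\sigma_j$ in $w$ sufficiently rigidly for the induction to go through.
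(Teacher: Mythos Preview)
Your forward direction is correct and coincides with the paper's one-line appeal to Lemma~\ref{lem:string_overlap}\eqref{op:string_overlap:cover}.

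For the reverse direction, the paper simply asserts that coverage makes $f$ non-zero on each $L_{e}$. You correctly notice that this is weaker than injectivity at $e$ once $e$ occurs more than once in $v$, and you propose an extremal scheme to close the gap. The difficulty is that the gap cannot be closed: the implication ``covers $\Rightarrow$ monomorphism'' is false in the generality stated. Over the Kronecker quiver with arrows $\alpha,\beta\colon 1\to 2$, take $w=\alpha\overline{\beta}\alpha\overline{\beta}$ and $v=\alpha\overline{\beta}\alpha\overline{\beta}\alpha\overline{\beta}$. The string $\alpha\overline{\beta}\alpha\overline{\beta}$ sits in $v$ as a factor substring at two positions (arrows $1$--$4$ and arrows $3$--$6$), each matching the full submodule substring $w$; these two copies together cover $v$, yet $\dim M(v)=7>5=\dim M(w)$, so $f_{\sigma_{1}}+f_{\sigma_{2}}$ cannot be injective. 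Your key step fails exactly here: the leftmost occurrence of vertex $1$ in $v$ is sent by $\sigma_{1}$ to the leftmost occurrence of $1$ in $w$, but the \emph{second} occurrence of $1$ in $v$ is sent to the very same place by $\sigma_{2}$, so the ``unreachable leading entry'' you want does not exist.

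This does not damage the downstream use in Lemma~\ref{lem: nondiagonal string sub}, where the covering is extracted from a pre-existing monomorphism $L\hookrightarrow M$ and such dimension obstructions are excluded from the outset. But as a freestanding equivalence the reverse implication needs an extra hypothesis, and no amount of bookkeeping with Lemma~\ref{lem:string_overlap}\eqref{op:string_overlap:common}--\eqref{op:string_overlap:diag} will manufacture one. If you want to salvage your argument, you should isolate the additional property of the coverings actually produced in Lemma~\ref{lem: nondiagonal string sub} (they are refinements of a genuine embedding, with distinct placements in $w$) and build that into the hypothesis before attempting the column-independence induction.
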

\begin{proof}
That $\{\sigma_{j} \mid j \in J\}$ must cover $v$ in the case where $f$ is a monomorphism follows from Lemma~\ref{lem:string_overlap}\eqref{op:string_overlap:cover}.

On the other hand, if $\{\sigma_{j} \mid j \in J\}$ covers $v$, then the definition of the maps $f$ and $f_{\sigma_{j}}$ ensure that $f$ is non-zero on $L_{e}$ for every vertex $e$ occurring in $v$.
\end{proof}

We can now prove the case of Proposition~\ref{prop: nondiagonal same dim} for string submodules.

\begin{lemma}\label{lem: nondiagonal string sub}
Let $M$ be a string module over a special biserial $K$-algebra $A$, and $L$ a string submodule of $M$.
Then there exists a submodule $\tilde{L}$ of $M$ such that $\tilde{L}$ is a direct sum of string modules, $\dimu L = \dimu\tilde{L}$ and $\tilde{L}$ does not embed into $M$ diagonally.
\end{lemma}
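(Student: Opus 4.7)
The plan is to leverage the basis of $\homa(L,M)$ from \cite{CB89,Kra91} together with the covering result of Lemma~\ref{lem:string_mono}. Let $f \colon L = M(v) \hookrightarrow M = M(w)$ denote the given embedding and write it in the form $f = \sum_{j \in J} \lambda_{j} f_{\sigma_{j}}$ where each $\sigma_{j}$ is a factor substring of $v$ and a submodule substring of $w$. By Lemma~\ref{lem:string_mono}, the collection $\{\sigma_{j}\}_{j\in J}$ covers $v$. The strategy is to extract from this cover a partition of $v$ into non-overlapping substrings, each of which remains a submodule substring of $w$, and then to assemble $\tilde{L}$ as the direct sum of the corresponding string modules at their respective positions in $w$.

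The construction of the partition is carried out by iterated reduction of overlaps. Whenever two covering substrings $\sigma_{j}$ and $\sigma_{j'}$ overlap on a common subword $\tau$, Lemma~\ref{lem:string_overlap}\eqref{op:string_overlap:diag} ensures that $\tau$ is itself a factor substring of $v$; in particular, the arrow of $v$ immediately preceding $\tau$ is inverse and the arrow immediately following $\tau$ is direct. I will then trim one of $\sigma_{j} = \tau u$ or $\sigma_{j'} = u'\tau$ so as to delete one copy of $\tau$ from the cover while keeping the remaining piece a submodule substring of $w$ at (a restriction of) the original position. Concretely, trimming $\sigma_{j}$ to $u$ preserves the submodule-substring property provided the last arrow of $\tau$ is direct, and trimming $\sigma_{j'}$ to $u'$ works provided the first arrow of $\tau$ is inverse. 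I will argue that at least one of these options is always available, either directly from the shape of $\tau$ or, when $\tau$ exhibits an internal peak, by first subdividing $\tau$ at that peak and applying the special biserial constraints on the bound quiver at the relevant vertex to verify that each piece is a submodule substring of $w$ at the corresponding position.

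Iterating, I arrive at a family $\{\tau_{i}\}_{i\in I}$ of pairwise non-overlapping substrings which collectively tile $v$ (so that every vertex of $v$ lies in exactly one $\tau_{i}$), such that each $\tau_{i}$ is a submodule substring of $w$, and the positions in $w$ are pairwise disjoint (since the original $\sigma_{j}$'s are distinct submodule-substring positions of $w$, and trimming stays inside these positions). Setting
\[
\tilde{L} := \bigoplus_{i \in I} M(\tau_{i})
\]
and embedding each $M(\tau_{i})$ at its position in $w$, we obtain a submodule of $M$ which is visibly a direct sum of string modules, satisfies $\dimu \tilde{L} = \dimu M(v) = \dimu L$ because the $\tau_{i}$'s tile $v$, and is not diagonally embedded since each summand is inserted at a single, distinct position of $w$.

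The main obstacle is the overlap-reduction step: I need to guarantee that the split pieces remain submodule substrings of $w$ in every configuration of arrow directions around $\tau$. This is where the special biserial hypothesis enters, controlling how many direct/inverse arrows can meet at a vertex and how the string $w$ is forced to behave at internal peaks of $\tau$. Once this combinatorial step is settled, the verification of the properties of $\tilde{L}$ listed above is routine.
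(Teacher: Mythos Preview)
Your overall plan—extract from the basis decomposition a cover of $v$, then trim overlaps until the surviving pieces tile $v$ as submodule substrings of $w$ at disjoint positions—is the same idea the paper pursues. The gap is in the trimming step. You condition on ``the last arrow of $\tau$ is direct'' or ``the first arrow of $\tau$ is inverse'' and propose to fall back on subdividing $\tau$ at an internal peak together with special biserial constraints when neither holds. This is both unnecessary and imprecisely formulated.

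The observation you are missing is that since $\tau$ is a factor substring of $v$ (Lemma~\ref{lem:string_overlap}\eqref{op:string_overlap:diag}), the arrow of $v$ immediately \emph{preceding} $\tau$ is inverse. Writing the left string as $\sigma_{j'} = u'\,\overline{\alpha}\,\tau$, this inverse arrow $\overline{\alpha}$ sits inside the image of $\sigma_{j'}$ in $w$, so trimming $\sigma_{j'}$ down to $u'$ (removing both $\tau$ \emph{and} the arrow $\overline{\alpha}$) always yields a submodule substring of $w$—unconditionally. No case distinction on the internal shape of $\tau$ is needed, and the special biserial hypothesis plays no role at this point. The paper uses exactly this: after a greedy pass that eliminates triple overlaps, it peels off the leftmost piece $\rho_0'$ from $\rho_0 = \rho_0'\,\overline{\alpha}\,\tau$, records $M(\rho_0')$ as a non-diagonal direct summand, and recurses on the remaining string $u$ (which is still covered by the remaining $\rho_i$).

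There is also a bookkeeping hazard in your formulation. If in your notation $\sigma_j = \tau u$ the strings $\tau$ and $u$ share their boundary vertex, then after trimming you are left with $u'\tau$ and $u$ still sharing that vertex, so $\dimu M(u'\tau) + \dimu M(u) \neq \dimu M(v)$ on the relevant segment; the same issue arises when you split $\tau$ at a peak. Cutting at the \emph{arrow} $\overline{\alpha}$, as the paper does, avoids this entirely.
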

\begin{proof}
Let $M = M(w)$, $L = M(v)$, and $f \colon L \mathrel{\hookrightarrow} M$ be the inclusion map.
Then $f$ can be written as a linear sum 
$\sum_{j \in J} \lambda_j f_{\sigma_j}$ for some finite set $J$, where the map $f_{\sigma_j}$ is induced by $\sigma_j$, where $\sigma_j$ is a factor substring of $v$ and submodule substring of $w$.

To begin, we show that there is a subset $R \subset \{\sigma_j \mid j \in J\}$ such that the elements of $R$ cover $v$ and there are no triple overlaps.
By `triple overlaps' we mean portions of the string $v$ which are covered by three or more substrings $\sigma_{j}$ for $j \in J$.
We construct $R$ inductively. 
Namely, start with $R = \emptyset$ and enumerate the vertices of $L$ from left to right $v_0, v_1, \dots, v_N$, and choose $\rho_0 \in \{\sigma_j \mid j \in J\}$ that covers $v_0$ and such that $\max\{ 1 \leqslant i \leqslant N \mid \rho_0 \text{ covers } v_i\}$ is maximal. 
Such a $\rho_{0}$ must exist, by Lemma~\ref{lem:string_overlap}\eqref{op:string_overlap:cover}.
Now, inductively, suppose we have $R= \{ \rho_0, \dots, \rho_j\}$ and that $\rho_j$ does not cover $v_N$. 
Choose a $\rho_{j+1} \in \{\sigma_j \mid j \in J\}$ that covers the last vertex of $\rho_{j}$ and is such that $\max\{ 1 \leqslant i \leqslant N \mid \rho_{j+1} \text{ covers } v_i\}$ is maximal. 
Add $\rho_{j+1}$ to $R$.
Such a $\rho_{j + 1}$ must exist, by Lemma~\ref{lem:string_overlap}\eqref{op:string_overlap:cover} and \eqref{op:string_overlap:common}.
In this way, we eventually construct $R$ such that the elements of $R$ cover $v$ and thus $\sum_{\rho \in R} f_\rho \colon L \to M$ is a monomorphism by Lemma~\ref{lem:string_mono}, noting that we have removed the $\lambda_{j}$ scalars from the original map. 
Since we always choose $\rho_{j}$ to be such that $\max\{ 1 \leqslant i \leqslant N \mid \rho_{j+1} \text{ covers } v_i\}$ is maximal, it is guaranteed that $\rho_{j + 1}$ does not overlap with $\rho_{j - 1}$---otherwise we could have chosen $\rho_{j + 1}$ instead of $\rho_{j}$. 
Hence there are no triple overlaps, as desired. We also note that $\rho_i$ overlaps with $\rho_j$ if and only if $j \in \{i-1, i+1\}$.

We now show that we may inductively resolve the remaining overlaps. 
That is, we will find a submodule of $M$ with the same dimension vector as $L$ where there are no diagonally embedded composition factors. 
Throughout, $\alpha, \beta, \gamma$ and $\delta$ will denote elements of $Q_1$.
We let $u$ be the subword of $v$ covered by $\{\rho_{i} \mid i \geqslant 1\}$. 
Hence there are word decompositions of $v$
\begin{align*}
v & = \rho_0 \beta v'' \\ v& = v' \overline{\alpha} u 
\end{align*} 
 We know that $\rho_0$ and $u$ overlap exactly along a word $\tau$ so there are $\epsilon_0, \epsilon_1 \in Q_1 \cup \overline{Q_1}$ and word decompositions 
\begin{align*}
\rho_0 &= \rho_0' \epsilon_0 \tau \\
u & = \tau \epsilon_1 u'.
\end{align*}
It follows that $\epsilon_0 = \overline{\alpha}$ and $\epsilon_1 = \beta$. 
Now, since $\rho_0$ is a submodule substring of $w$, we have the following decomposition of $w$.
\begin{align*}
w &= w' \gamma \rho_0 \overline{\delta} w''
 \\ &= w' \gamma \rho_0' \overline{\alpha} \tau \overline{\delta} w''.
\end{align*}
Thus, $\rho_0'$ is a submodule substring of $w$.
Hence, $\rho_0'$
defines a morphism of string modules $M(\rho_0') \to M(w)$. 
Consider the morphism of string modules 
\[ \sbm{f_{\rho_0'} & f_u } \colon M(\rho_0') \oplus M(u) \rightarrow M,\] where $f_u= \sum_{i\geq1}f_{\rho_{i}}$. This is a monomorphism by Lemma~\ref{lem:string_mono} since the $\{\rho_i \mid i \geqslant 1\}$ cover $u$.
Moreover, the direct summand $M(\rho'_{0})$ is not diagonally embedded here.
Also, since \[v = \rho_0' \overline{\alpha} u,\] we have that $\dimu L = \dimu M(\rho_0') \oplus M(u)$.
We may inductively proceed in this way, next considering $f_{u}\colon M(u) \to M$, which has strictly fewer morphisms $f_{\rho_{i}}$ as summands than our original morphism $f \colon L \to M$.
By induction, we may resolve all of the overlaps and obtain a monomorphism $\tilde{L} \hookrightarrow M$ where none of the composition factors of $\tilde{L}$ are diagonally embedded, with $\tilde{L}$ a submodule of $M$ such that $\dimu \tilde{L} = \dimu L$.
\end{proof}

\subsubsection{Band submodules of string modules}

The following lemma will help us to find non-diagonally embedded submodules which have the same dimension vectors as diagonally embedded band submodules.

\begin{lemma}\label{lem:band_overlap}
Let $L = M(v, \lambda, m)$ be a band module, $M = M(w)$ be a string module and $f \colon L \hookrightarrow M$ be a monomorphism. 
We may write $f$ as a linear sum $\sum_{j \in J} \lambda_j f_{\sigma_j}$ for some finite set $J$, where $\sigma_j$ is a submodule substring of $w$ and a factor substring of the infinite string $^\infty v ^\infty$, and $\lambda_{j}$ are non-zero.

\begin{enumerate}
\item 
Fix a vertex $e\in Q_0$. 
Then, for each factorisation of $v$ of the form $v=v'ev''$, there are at least $m$ different factorisations of $w$ of the form $w=w'ew''$.\label{op:band_overlap:factorisations}
\item  Then the set  $\{\sigma_j \mid j \in J\}$ can be arranged to cover a  substring of $^\infty v ^\infty$ of the form $v^m$ \textup{(}possibly after rotation of $v$\textup{)}.\label{op:band_overlap:cover}
\end{enumerate}
\end{lemma}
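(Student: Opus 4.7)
The plan is to treat the two claims separately. For claim (1), I would use injectivity of $f$ at each vertex: since $f$ is a monomorphism, the induced linear map $L_e \hookrightarrow M_e$ is injective for every $e \in Q_0$, so $\dim_K L_e \leqslant \dim_K M_e$. By definition of a band module, each occurrence of $e$ in $v$ contributes a copy of $K^m$ to $L_e$; for the string module $M(w)$, each occurrence of $e$ in $w$ contributes a copy of $K$ to $M_e$. Hence if $v = v'ev''$ is a factorisation, so that $e$ appears in $v$, then $e$ must appear at least $m$ times in $w$, giving the required $m$ factorisations of $w$ of the form $w = w'ew''$.

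For claim (2), I would combine (1) with the description of the basis of $\homa(L, M)$ recalled in Section~\ref{sec:background}. Each $\sigma_j$ is a factor substring of ${}^\infty v^\infty$, and its position in ${}^\infty v^\infty$ is determined modulo the period $|v|$; this position, together with a linear functional on $K^m$, specifies which \emph{layer} of the band module the map $f_{\sigma_j}$ acts on. The plan is to argue that the placements of the $\sigma_j$'s must collectively cover every vertex of $v$ with multiplicity at least $m$---otherwise some $L_e$ would fail to inject, contradicting that $f$ is a monomorphism---and then to observe that, up to a rotation of $v$, these placements can be aligned to sit inside a single block $v^m \subseteq {}^\infty v^\infty$.

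The main obstacle is making this alignment precise: the $\sigma_j$'s can have a wide variety of lengths and starting positions, and the injectivity condition at different vertices is coupled through the Jordan-block structure of the band module via the eigenvalue $\lambda$. To handle this, I would first fix a vertex $e_0$ occurring in $v$, apply claim (1) to produce $m$ distinct factorisations of $w$ at $e_0$, and track back through the $f_{\sigma_j}$ to identify $m$ distinct placements in ${}^\infty v^\infty$ passing through $e_0$. An inductive argument traversing the band $v$ once, together with the fact that every factor substring of ${}^\infty v^\infty$ extends uniquely until it reaches the boundary of a factor substring, should then allow the covering to be extended consistently to a full block $v^m$.
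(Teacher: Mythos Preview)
Your argument for part~(1) is correct and is exactly the paper's: injectivity at each vertex forces $\dim_K L_e \leqslant \dim_K M_e$, and counting occurrences of $e$ in $v$ and in $w$ gives the claim. (In fact your own set-up gives the stronger bound $mt$ factorisations of $w$, where $t$ is the number of occurrences of $e$ in $v$; you only stated $m$, but that is all the lemma asks.)

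For part~(2), your outline has the right starting point but diverges from the paper and contains a misconception and a gap.

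First, the misconception: for maps from a band module to a string module, the basis maps $f_{\sigma_j}$ are indexed only by the factor substring $\sigma_j$ of ${}^\infty v^\infty$ together with its position as a submodule substring of $w$; there is \emph{no} additional linear-functional component on $K^m$. That extra datum $\phi$ appears only for band-to-band morphisms. So the ``layer'' bookkeeping you propose does not match the description of $\homa(L,M)$ used in the paper, and the coupling through the Jordan block that worries you is already absorbed into the placement of $\sigma_j$ in ${}^\infty v^\infty$.

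Second, the gap: knowing that every vertex of $v$ is covered with multiplicity at least $m$ does not by itself let you ``align'' the $\sigma_j$ inside a single block $v^m$; your inductive traversal is not spelled out and it is unclear how it would terminate without circularity. The paper avoids this entirely with a short parity argument. Write $v = v'xe$ with $x$ the last letter. The uncovered-vertex-gives-kernel argument (which you do have) shows directly that the $\sigma_j$ can be arranged to cover the block $v^{m-1}v'$. Now observe that in ${}^\infty v^\infty$ this block is flanked on \emph{both} sides by the \emph{same} letter $x$: one has ${}^\infty v^\infty = u\,x\,(v^{m-1}v')\,x\,u'$. If the covering stopped exactly at the left end, the $\sigma_j$ reaching that end would be a factor substring preceded by $x$, forcing $x$ to be inverse; if it stopped exactly at the right end, the $\sigma_j$ reaching that end would be followed by $x$, forcing $x$ to be direct. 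Since $x$ cannot be both, the covering must extend past one end by at least one more copy of $x$, and hence covers a full $v^m$ after a suitable rotation. This single observation replaces your inductive scheme.
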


\begin{proof}
\begin{enumerate}
\item 
We first note that the dimension $\dim_K(M_e)$ of the vector space $M_e$ at the vertex $e \in Q$ for $M=M(w)$ is equal to the number of different factorisations of the form $w=w'ew''$.
Meanwhile, the dimension $\dim_K(L_e)$ of the vector space $L_e$ at the vertex $e$ of the band module $L = M(v, \lambda, m)$ is equal to $mt$, where $t$ is the number of factorisations of the form $v=v'ev''$. 
Since $f$ is a monomorphism, we have that $\dim_K(M_e) \geqslant \dim_K(L_e)$ and the result follows.
\item Let $v = v' x e$ for $x \in Q_1 \cup \overline{Q_1}$ and $e \in Q_0$. 
Then the $\{\sigma_j \mid j \in J\}$ can be arranged to cover $v^{m-1}v'$. 
Indeed, suppose that there is a vertex $e_0$ in $v^{m-1}v'$ which is left uncovered. 
Then at the vertex $e_0$,  $m$  has a non-zero kernel which contradicts $m$ being a monomorphism. 
Moreover, there is a word decomposition  $^\infty v ^\infty = u x (v^{m-1}v') x u'$. Observe that, since  the arrow $x$ cannot be both inverse and direct, the set of strings $\{\sigma_j \mid j \in J\}$ must cover more than $v^{m-1}v'$. Hence this set of strings must cover at least $m$ copies of $x$, meaning that a substring of $^\infty v ^\infty$ of the form $v^{m}$ is covered, after possibly rotating $v$.
\end{enumerate}
\end{proof}

\begin{lemma}\label{lem: nondiagonal band sub}
Let $M$ be a string module over a special biserial $K$-algebra $A$, and $L = M(v, \lambda, m)$ a band submodule of $M$.
Then there exists a submodule $\tilde{L}$ of $M$ such that $\tilde{L}$ is a direct sum of string modules, $\dimu L = \dimu\tilde{L}$ and $\tilde{L}$ does not embed into $M$ diagonally.
\end{lemma}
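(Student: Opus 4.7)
The plan is to reduce the band submodule case to the string submodule case already handled in Lemma~\ref{lem: nondiagonal string sub}, by exploiting Lemma~\ref{lem:band_overlap}\eqref{op:band_overlap:cover} to produce a string submodule of $M$ with the same dimension vector as $L$.

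First, I would write the inclusion $f\colon L = M(v,\lambda,m) \hookrightarrow M = M(w)$ in the standard basis as a linear combination $\sum_{j\in J}\lambda_j f_{\sigma_j}$, where each $\sigma_j$ is a factor substring of ${}^\infty v^\infty$ and a submodule substring of $w$. Applying Lemma~\ref{lem:band_overlap}\eqref{op:band_overlap:cover}, and possibly rotating $v$, I may assume that the collection $\{\sigma_j \mid j \in J\}$ covers a substring of ${}^\infty v^\infty$ of the form $v^m$.

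The key observation is that $v^m$ is itself a string (since $v$ is a band) and each $\sigma_j$ involved in the covering is a factor substring of $v^m$, while remaining a submodule substring of $w$. Consequently, the strings $\sigma_j$ define morphisms $f_{\sigma_j}\colon M(v^m) \to M(w)$, and the sum $\sum_{j \in J} f_{\sigma_j}\colon M(v^m) \to M$ is a morphism of string modules whose associated set of substrings covers $v^m$. By Lemma~\ref{lem:string_mono}, this morphism is a monomorphism, so $M(v^m)$ embeds into $M$ as a string submodule. Moreover, a direct count shows that $\dimu M(v^m) = \dimu L$: each vertex $e$ occurring $t$ times in $v$ appears $mt$ times in $v^m$, matching the dimension $mt$ contributed by $L_e$ as a band module of rank $m$.

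Having exhibited a string submodule $M(v^m) \hookrightarrow M$ of dimension vector $\dimu L$, I now invoke Lemma~\ref{lem: nondiagonal string sub} applied to $M(v^m) \hookrightarrow M$ to produce a submodule $\tilde L$ of $M$ which is a direct sum of string modules, satisfies $\dimu \tilde L = \dimu M(v^m) = \dimu L$, and does not embed diagonally into $M$. This $\tilde L$ then furnishes the desired submodule.

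The main obstacle I anticipate is ensuring that the morphism $\sum_{j\in J} f_{\sigma_j}\colon M(v^m) \to M$ really is well-defined, that is, verifying that each $\sigma_j$ occurring in the covering of $v^m$ is genuinely a factor substring of the finite string $v^m$ (not merely of ${}^\infty v^\infty$) so that the Crawley-Boevey basis description applies; this should follow because the covering arrangement provided by Lemma~\ref{lem:band_overlap}\eqref{op:band_overlap:cover} places the $\sigma_j$ strictly inside $v^m$, and any $\sigma_j$ extending beyond the boundary of $v^m$ may be replaced by its intersection with $v^m$, which will still be a factor substring with the same covering properties.
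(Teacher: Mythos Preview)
Your overall strategy---reduce to the string case by first producing a string submodule of $M$ with the same dimension vector as the band module $L$---is exactly the paper's approach. The gap is in the dimension count: the string module $M(v^m)$ does \emph{not} have the same dimension vector as $L = M(v,\lambda,m)$. If the band $v$ has $n$ arrows, then the band module $M(v,\lambda,m)$ has total dimension $mn$, but the string $v^m$ has $mn$ arrows and therefore traverses $mn+1$ vertices: the start vertex of $v$ is visited one extra time. Concretely, if a vertex $e$ occurs $t$ times in the cyclic word $v$, then it contributes $mt$ to $\dimu M(v,\lambda,m)$, but it occurs $mt + 1$ times in the string $v^m$ when $e$ is the initial vertex $s(\alpha_1)$. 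Thus $\dimu M(v^m) = \dimu L + e_{s(\alpha_1)}$, and your proposed intermediate submodule is too large.

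The paper repairs this by using the shorter string $u = v^{m-1}v'$, where $v = v'\,\overline{\beta}\, e$ (i.e.\ one deletes the final arrow of $v$); this has $mn-1$ arrows and hence exactly the right dimension vector. Two further points are then needed. First, the rotation of $v$ must be chosen so that its last arrow is inverse; the paper argues this is forced because the $\sigma_j$ covering the initial vertex of $v^m$ is a factor substring of ${}^\infty v^\infty$. Second, any $\sigma_j$ that overruns $u$ must be truncated to $\sigma'_j$, and one checks that $\sigma'_j$ remains a \emph{submodule} substring of $w$---this is precisely where the inverse orientation of the deleted arrow $\overline{\beta}$ is used. Your anticipated obstacle about truncating the $\sigma_j$'s is the right instinct, but the truncation must occur one arrow earlier than you propose, and the orientation of that arrow is what guarantees the truncated pieces still define legitimate basis maps into $M$.
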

\begin{proof}
We again let $M = M(w)$ and let $f \colon L \hookrightarrow M$ be the inclusion map. 
We will show that there exists a string module $L' = M(u)$ such that $\dimu L' =\dimu L$ and that $L'$ is a submodule of $M$; whence we will be in the situation of Lemma~\ref{lem: nondiagonal string sub}.

As it was said in Lemma~\ref{lem:band_overlap}\eqref{op:band_overlap:cover}, we know that $\iota = \sum_{j \in J} \lambda_j f_{\sigma_j}$ where the set $\{\sigma_j \mid j \in J\}$ covers the string $v^m$.
Moreover, we may choose a rotation of the band $v$ such that the $\sigma_j$ are subwords of $v^m v^\infty$.
Since the string covering the initial segment of $v^{m}v^{\infty}$ must be a factor substring of $^{\infty} v ^{\infty}$, it follows that $v$ is of the form \[ v = v' \overline{\beta} e. \] Let $u$ be the substring $u=v^{m-1}v'$. 
Since the set  $\{\sigma_j \mid j\in J\}$ covers $v^m$, it also covers $u$. Also note that $\dimu M(u)=\dimu L$.

Now, let $j \in J$.
If $\sigma_j$ is a substring of $u$ then it is a factor substring of $u$, by virtue of being a factor substring of $^{\infty} v^{\infty}$. Since $\sigma_j$ is also a submodule substring of $w$, it induces a map $f_{\sigma_j}\colon M(u) \to M(w)$.
In this case, fix $\sigma'_j=\sigma_j$.
Otherwise, $\sigma_j$ is of the form $\sigma_j=\sigma_j'\overline{\beta}\sigma''_j$.
We likewise know in this case that $\sigma_{j}$ is a factor substring of $u$, which gives us that $\sigma_j'$ is a factor substring of $u$ too, noting that there is no arrow of $u$ following $\sigma'_{j}$.
Moreover, since $\sigma_j$ is a submodule substring of $w$ we have that $w=w'\gamma \sigma_j \overline{\delta} w''$.
Then $\sigma'_j$ is a submodule substring of $w$ since $w=w'\gamma \sigma_j'\overline{\beta}\sigma''_j \overline{\delta} w''$.
This implies that $\sigma'_j$ induces a map $f_{\sigma'_j}\colon M(u) \to M(w)$.
Note that the portion of $u$ covered by $\sigma''_{j}$ here must already be covered by some other string $\sigma_{k}$ or $\sigma'_{k}$, due to the rotation we have chosen of $v^{m}$.
As a consequence we can define a map $\iota' \colon M(u) \to M(w)$ as $\iota' = \sum_{j \in J} \lambda_j f_{\sigma'_j}$, which is a monomorphism because the set $\{\sigma'_j \mid j \in J\}$ covers $u$.
This finishes the proof.
\end{proof}

These lemmas now jointly establish Proposition~\ref{prop: nondiagonal same dim}.

\begin{proof}[Proof of Proposition~\ref{prop: nondiagonal same dim}]
Since $A$ is a special biserial algebra, we know that the indecomposable $A$-modules are either string modules, band modules, or projective-injectives. Since $M$ is a string module, $M$ is indecomposable, and so cannot have a projective-injective submodule. We can furthermore assume that the submodule $L$ of $M$ is indecomposable: if $L$ is decomposable then we can consider its indecomposable direct summands one at a time. This gives us that $L$ is either a string submodule, in which case the result follows by Lemma~\ref{lem: nondiagonal string sub}, or a band submodule, in which case the result follows by Lemma~\ref{lem: nondiagonal band sub}.
\end{proof}

\subsubsection{Describing the stability space}

We are now in a position to show how the stability space of the string module $M(w)$ can be described in terms of the stability space of the corresponding thin module $T(w)$.

\begin{proof}[Proof of Theorem~\ref{thm:big-small}]
We claim that if one takes the equations and inequalities defining $\mathcal{D}(\thin{w})$ and sets the coordinates of repeated factors in $M(w)$ equal to each other, then we obtain the equations and inequalities defining $\mathcal{D}(M(w))$.

This follows from Proposition~\ref{prop: nondiagonal same dim} since for every submodule of $M(w)$, we can find a submodule with the same dimension vector which is not diagonally embedded, and therefore corresponds to a submodule of $\thin{w}$.
Thus we obtain every dimension vector of submodules of $M(w)$ by taking submodules of $\thin{w}$ and identifying the coordinates of the repeated factors in $M(w)$. 
Since the inequalities defining the stability space only depend on the dimension vectors of submodules, we are done. 
\end{proof}

In the following example, we illustrate how Proposition~\ref{prop:stability space of thin modules} and Theorem~\ref{thm:big-small} can be applied to calculate the minimal generating set of the stability space of a string module $M$ which is not thin.

\begin{example}
Consider \[M = \tcs{1\\2\\3\\1\\2}\,.\] We first consider the thin representation given by the string of this module, namely \[T = \tcs{1\\2\\3\\1'\\2'}\,.\]
We know from Proposition~\ref{prop:stability space of thin modules} that the vectors generating $\D(T)$ are:
\begin{align*}
(1,-1,0,0,0), \\
(0,1,-1,0,0), \\
(0,0,1,-1,0), \\
(0,0,0,1,-1).
\end{align*}
Hence, we may write any point in $\mathcal{D}(T)$ as
\begin{align*}
n_{1}(1,-1,0,0,0)&+n_{2}(0,1,-1,0,0)\\
&+n_{3}(0,0,1,-1,0)+n_{4}(0,0,0,1,-1) \\
&\qquad= (n_{1},n_{2}-n_{1},n_{3}-n_{2},n_{4}-n_{3},-n_{4}),
\end{align*}
where $n_{1},n_{2},n_{3},n_{4} \in \mathbb{R}_{\geqslant 0}$. Then, by setting the coordinates of the repeated factors in $M$ equal to each other, we obtain
\begin{align*}
n_{1}&=n_{4}-n_{3}, \\
n_{2}-n_{1}&=-n_{4}.
\end{align*}
By adding the two equations together we have that $n_{2}=-n_{3}$. But since these are both non-negative, we must therefore have $n_{2}=n_{3}=0$. Hence $n_{1}=n_{4}$, and so the intersection of $\D(T)$ with the hyperplanes setting the coordinates equal to each other is $(n_{1},-n_{1},0,n_{1},-n_{1}) = \iota(\mathcal{D}(M))$, using the notation from Theorem~\ref{thm:big-small}. Hence we have that $\mathcal{D}(M) = \pspan{(1,-1,0)}$.
\end{example}

We now develop this method for computing $\D(M(w))$ further, by showing how one can find the ray vectors of $\D(M(w))$ without solving equations. This will be useful for proving results in Section~\ref{sec:w&cspecial}.

\begin{definition}
Let $M(w)$ be a string $A$-module and let $S$ be a minimal generating set of $\D(\thin{w})$. 
We say that a sum $\bw = \sum_{\bv \in S}\lambda_{\bv}\bv$ is an \emph{admissible sum} if $\bw$ lies in all the hyperplanes which set the relevant elements of $\thin{w}$ equal to each other. 
We say that $\bw$ is a \emph{minimal admissible sum} if it is an admissible sum and there is no non-trivial admissible sum of vectors from a proper subset of $S$.
\end{definition}

For the next result, we recall the linear map $\iota$ from Notation~\ref{not: iota map}.

\begin{proposition}\label{prop:min_adm_sum}
Let $M(w)$ be a string $A$-module. A vector $\bv \in \mathcal{D}(M(w))$ is a ray vector if and only if $\iota(\bv)$ is a minimal admissible sum of ray vectors in $\D (\thin{w})$.
\end{proposition}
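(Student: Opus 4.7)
The approach is to combine Theorem~\ref{thm:big-small} with the simpliciality of $\mathcal{D}(\thin{w})$ and a standard fact about faces of intersections of cones with subspaces.

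First, I would observe that the quiver $Q_{w}$ underlying $\thin{w}$ is an oriented path, so its underlying graph is a tree. Lemma~\ref{lem:simplicial_cone} then implies that $\mathcal{D}(\thin{w})$ is simplicial, and hence that the minimal generating set $S$ supplied by Proposition~\ref{prop:stability space of thin modules} is linearly independent. Consequently, every $\bw \in \mathcal{D}(\thin{w})$ admits a unique expression $\bw = \sum_{\bv \in S} \lambda_{\bv} \bv$ with $\lambda_{\bv} \geqslant 0$; denote its support by $T(\bw) := \{\bv \in S \mid \lambda_{\bv} > 0\}$. The faces of the simplicial cone $\mathcal{D}(\thin{w})$ correspond bijectively to subsets $T \subseteq S$ via $T \mapsto \pspan{T}$, and the smallest face containing $\bw$ is $\pspan{T(\bw)}$.

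Next, I would invoke the standard polyhedral fact that for a polyhedral cone $C$ and a linear subspace $L$, the faces of $C \cap L$ are precisely the intersections $F \cap L$ with $F$ a face of $C$, and the smallest face of $C \cap L$ containing a point $x$ is $F_{x} \cap L$, where $F_{x}$ is the smallest face of $C$ containing $x$. By Theorem~\ref{thm:big-small}, $\iota(\mathcal{D}(M(w))) = \mathcal{D}(\thin{w}) \cap \mathcal{R}$, so the smallest face of $\iota(\mathcal{D}(M(w)))$ containing $\iota(\bv)$ is $\pspan{T(\iota(\bv))} \cap \mathcal{R}$. Since $\iota$ is injective and linear, $\bv$ is a ray vector of $\mathcal{D}(M(w))$ if and only if $\iota(\bv)$ lies on a ray of $\iota(\mathcal{D}(M(w)))$, equivalently if and only if $\pspan{T(\iota(\bv))} \cap \mathcal{R}$ is one-dimensional.

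Finally, I would translate this one-dimensionality into the language of the definition. Since the non-zero vector $\iota(\bv)$ lies in $\pspan{T(\iota(\bv))} \cap \mathcal{R}$, the latter is one-dimensional precisely when $\pspan{T'} \cap \mathcal{R} = \{0\}$ for every proper subset $T' \subsetneq T(\iota(\bv))$. A non-zero element of $\pspan{T'} \cap \mathcal{R}$ is by definition a non-trivial admissible sum supported on $T'$, so this last condition is exactly that $\iota(\bv)$ is a minimal admissible sum. The main delicate point I foresee is ensuring that the support of $\iota(\bv)$ in $S$ is well-defined and that supports correspond to faces, both of which rely on the simpliciality of $\mathcal{D}(\thin{w})$; this is why it is essential to invoke Lemma~\ref{lem:simplicial_cone}, which in turn relies on $Q_{w}$ being a tree.
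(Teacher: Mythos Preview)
Your argument is correct, and it takes a genuinely different route from the paper's proof. The paper argues both implications directly by a subtraction trick: given a ray vector $\bw$ written as $\sum_{\bv\in S}\lambda_{\bv}\bv$ with all $\lambda_{\bv}\neq 0$, and a non-trivial admissible sum $\bw'$ supported on a proper subset $S'\subsetneq S$, one chooses the largest $\mu>0$ with $\bw-\mu\bw'$ still a non-negative combination of $S$, and observes that $\bw=\mu\bw'+(\bw-\mu\bw')$ exhibits $\bw$ as a sum of two non-proportional elements of $\iota(\mathcal{D}(M(w)))$; the converse is handled by the same device. By contrast, you identify the simpliciality of $\mathcal{D}(\thin{w})$ via Lemma~\ref{lem:simplicial_cone}, use the standard fact that the faces of $C\cap L$ are exactly $F\cap L$ for faces $F$ of $C$, and reduce the question to whether $\pspan{T(\iota(\bv))}\cap\mathcal{R}$ is one-dimensional. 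This is more structural and makes transparent exactly where the tree hypothesis on $Q_{w}$ is used: simpliciality guarantees that the support $T(\bw)$ of any $\bw\in\mathcal{D}(\thin{w})$ is well-defined, which is what allows you to pass between ``supported on a proper subset'' and ``lies in a proper face''. The paper's argument uses the same fact tacitly when it asserts that $\bw'$ and $\bw''$ are not scalar multiples (this relies on their supports being distinct, hence on uniqueness of the representation), so your proof has the virtue of isolating the one non-obvious input. The trade-off is that the paper's proof is self-contained at the level of linear algebra, whereas yours appeals to a polyhedral lemma about faces of $C\cap L$ that, while standard, is not stated elsewhere in the paper.
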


\begin{proof}
We show that every ray vector of $\iota(\D(M(w)))$ is a minimal admissible sum of ray vectors of $\D(\thin{w})$ and, conversely, that every minimal admissible sum of ray vectors of $\D(\thin{w})$ is a ray vector of $\iota(\D(M(w)))$. Let $B$ be the minimal generating set of $\D(\thin{w})$.

Let $\bw$ be a ray vector of $\iota(\D(M(w)))$. 
Since $\iota(\D(M(w)))$ is a subset of $\D(\thin{w})$ by Theorem~\ref{thm:big-small}, we have that $\bw = \sum_{\bv \in S}\lambda_{\bv}\bv$, where $S \subseteq B$. 
We may also assume that $\lambda_{\bv}$ is non-zero for all $\bv \in S$. 
Since $\bw \in \iota(\D(M(w)))$, it is clear that this is an admissible sum. 
Suppose that it is not a minimal admissible sum. 
Then there exists a proper subset $S' \subset S$ such that $\bw' = \sum_{\bv \in S'}\lambda'_{\bv}\bv \in \iota(\D(M(w)))$. 
Choose the maximal $\mu > 0$ such that $\bw'' = \bw - \mu\bw'$ is a non-negative linear sum of elements of $S$. 
Since $\mu$ is maximal, we must have that the coefficient of $\bv'$ in $\bw''$ is zero, for some $\bv' \in S'$. 
Moreover, $\bw'' \in \iota(\D(M(w)))$, since it is a non-negative linear combination of elements of $B$ and lies in the right hyperplanes. 
However, this means that $\bw = \mu\bw' + \bw''$, with $\bw'$ and $\bw''$ elements of $\iota(\D(M(w)))$ which are not scalar multiples of each other. But then $\bw$ is not a ray vector of $\iota(\D(M(w)))$.

Conversely, let $\bw = \sum_{\bv \in S}\lambda_{\bv}\bv$ be an admissible sum, where $S \subseteq B$ and $\lambda_{\bv}$ are all non-zero. 
If this is not a ray vector of $\iota(\D(M(w)))$, then there exist $\bw', \bw'' \in \iota(\D(M(w)))$ such that $\bw = \bw' + \bw''$, with $\bw'$ and $\bw''$ not scalar multiples of each other. 
Since $\iota(\D(M(w)))$ is a subset of $\D(\thin{w})$, we have $\bw' = \sum_{\bv \in S'}\lambda'_{\bv}\bv$ and $\bw'' = \sum_{\bv \in S''}\lambda''_{\bv}\bv$, where we can assume that the scalars $\lambda'_{\bv}$ and $\lambda''_{\bv}$ are non-zero. 
If $S'$ and $S''$ are proper subsets of $S$, then it is immediate that $\bw$ is not a minimal admissible sum. 
Otherwise, we similarly let $\mu$ be maximal such that $\bw' - \mu\bw''$ is a non-negative linear sum.
This lies in $\iota(\D(M(w)))$ and the coefficient of some $\bv' \in S''$ must be zero. 
Hence $\bw' - \mu\bw''$ is an admissible sum of vectors from a proper subset of $S$, which means $\bw$ is not a minimal admissible sum.
\end{proof}

We can use this proposition to compute the stability space $\D(M(w))$, as we illustrate in the following examples.

\begin{example}
We consider again $M$ and $T$, as in the previous example.
We take each generating vector of $\mathcal{D}(T)$ in turn and add multiples of other generating vectors until the coordinates of the repeated factors of $M$ are equal, giving an admissible sum. 
By adding as few vectors as possible, we ensure that it is a minimal admissible sum.

For instance, if we start with $(1,-1,0,0,0)$, in order to get an admissible sum we must make the first and fourth coordinates equal, as well as the second and fifth. 
We can achieve this by adding the vector $(0,0,0,1,-1)$ to obtain $(1,-1,0,1,-1)$.

If we now consider the vector $(0,1,-1,0,0)$, there is no other vector we can add to equate the second and fifth coordinates without rendering the first and fourth no longer equal. 
Hence we ignore this vector. A similar argument applies to $(0,0,1,-1,0)$.

Since we have effectively already considered $(0,0,0,1,-1)$, the only generating vector we obtain is $(1,-1,0,1,-1)$, which corresponds to $(1,-1,0)$ once the coordinates are identified. Hence, we again obtain $\D(M) = \pspan{(1, -1, 0)}$
\end{example}

\subsection{Band modules}

We now study the stability spaces of band modules. 
Recall that a \emph{homogeneous tube} is a component of an Auslander--Reiten quiver of the form $\mathbb{Z}A_{\infty}/\langle \tau \rangle$.
An important property of band modules is that they form homogeneous tubes in the Auslander--Reiten quiver of the algebra. 
We start by showing a general result about the stability spaces of modules in the same homogeneous tube.

\begin{proposition}
Let $M, N$ be finitely generated $A$-modules  in the same homogeneous tube of rank 1 of the Auslander--Reiten quiver of $A$. Then $\D(M) = \D(N)$.
\end{proposition}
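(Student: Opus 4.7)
The plan is to reduce to showing $\D(M) = \D(M[r])$ for every $r \geq 1$, where $M$ denotes the quasi-simple at the mouth of the homogeneous tube and $M[r]$ is the indecomposable module of quasi-length $r$. Since any two modules in the tube are of this form, transitivity through the case $r = 1$ will then suffice. I will exploit two standard features of a rank $1$ homogeneous tube: there are short exact sequences
\[
0 \longrightarrow M \longrightarrow M[r] \longrightarrow M[r-1] \longrightarrow 0,
\]
which inductively give $[M[r]] = r[M]$ in $K_{0}(A)$ and exhibit $M$ as a submodule of every $M[r]$.

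For the inclusion $\D(M) \subseteq \D(M[r])$, I take $\bv \in \D(M)$ so that $M \in \mathcal{W}_{\bv}$. Since $\mathcal{W}_{\bv}$ is a wide subcategory of $\mc A$ by \cite[Proposition~3.24]{BST19}, it is in particular closed under extensions. An induction on $r$ applied to the short exact sequence above then gives $M[r] \in \mathcal{W}_{\bv}$, i.e., $\bv \in \D(M[r])$.

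For the reverse inclusion, let $\bv \in \D(M[r])$. From $\langle \bv, [M[r]] \rangle = 0$ together with $[M[r]] = r[M]$ I deduce $\langle \bv, [M] \rangle = 0$. For the semistability inequalities, let $L$ be any proper submodule of $M$; then the composition $L \hookrightarrow M \hookrightarrow M[r]$ realises $L$ as a proper submodule of $M[r]$, so $\langle \bv, [L] \rangle \leq 0$ follows from $\bv \in \D(M[r])$. This gives $\bv \in \D(M)$.

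The argument is short, and the main obstacle is really just confirming the structural claims about the rank $1$ tube---the chain of embeddings $M = M[1] \subset M[2] \subset \cdots$ and the proportionality of dimension vectors---but both are standard consequences of the Auslander--Reiten theory of homogeneous tubes. The wide subcategory property of $\mathcal{W}_{\bv}$ does all the work in one direction, while the common embedding $M \hookrightarrow M[r]$ together with $[M[r]] = r[M]$ handles the other.
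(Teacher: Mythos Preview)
Your proof is correct and follows the same overall strategy as the paper: reduce to comparing the quasi-simple $M$ at the mouth with an arbitrary $M[r]$, and use the wide subcategory property of $\mathcal{W}_{\bv}$ (extension-closure) for the inclusion $\D(M) \subseteq \D(M[r])$.

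The reverse inclusion is where you diverge. The paper observes that the two short exact sequences
\[
0 \to N' \to N \to M \to 0 \quad\text{and}\quad 0 \to M \to N \to N'' \to 0
\]
combine to give an endomorphism of $N = M[r]$ with image isomorphic to $M$, and then invokes wideness a second time (closure under images, which follows from closure under kernels and cokernels) to conclude $M \in \mathcal{W}_{\bv}$. You instead verify semistability of $M$ directly from the definition, using $[M[r]] = r[M]$ to get $\langle \bv, [M]\rangle = 0$ and the embedding $M \hookrightarrow M[r]$ to transport the submodule inequalities. Your route is slightly more elementary in that it only needs extension-closure from the wide-subcategory result, whereas the paper's route is more uniform, using the same structural tool in both directions and never unpacking the definition of semistability.
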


\begin{proof}
We can suppose without loss of generality that $M$ is the module sitting at the mouth of the tube. 
Then, every module $N$  in the same tube can be filtered by~$M$.

Let $\mathbf{v}\in\D(M)$. 
As we noted in Section~\ref{subsec:prelim-stab-spaces-geometry}, it was shown in \cite[Proposition~3.24]{BST19} that the category of $\mathbf{v}$-semistable modules is a wide subcategory of $\mc A$. Recall that a wide subcategory is one which is closed under kernels, cokernels, and extensions. Since $N$ is filtered by $M$  and $M$ is a $\bv$-semistable module, so is $N$.
Hence $\mathbf{v}\in\D(N)$.

Conversely, let $\mathbf{v}\in\D(N)$.
Since $N$ is filtered by $M$ we have the following two short exact sequences.
\begin{align*}
&0\to N' \to N \to M \to 0\\
&0\to M \to N \to N'' \to 0
\end{align*}
This gives the existence of an endomorphism $f\colon N \to N$ such that Im$f \cong M$. 
Since the category of $\mathbf{v}$-semistable objects is wide in $\mc A$ \cite[Proposition~3.24]{BST19} and $N$ is $\mathbf{v}$-semistable by hypothesis, so is $M$.
\end{proof}

As a direct consequence of the previous we obtain the following corollary.

\begin{corollary} \label{cor: band dimension}
Let $M(b, \lambda, 1)$ be a band module. Then we have $\D(M(b, \lambda, r))=\D(M(b, \lambda, 1))$ for all $r\in \mathbb{N}$.
\end{corollary}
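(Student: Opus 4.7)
The plan is to deduce this directly from the preceding proposition. The only point that requires justification is that the modules $\{M(b,\lambda,r) \mid r \in \mathbb{N}\}$ all sit inside the same homogeneous tube of rank $1$ in the Auslander--Reiten quiver of $A$, with $M(b,\lambda,1)$ at the mouth.

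First I would recall the well-known fact from \cite{BR87,WW85} that, for a special biserial $K$-algebra over an algebraically closed field, the band modules $M(b,\lambda,r)$ associated to a fixed band $b$ and fixed $\lambda \in K^\ast$ form a homogeneous tube: namely, $\tau M(b,\lambda,r) \cong M(b,\lambda,r)$ for every $r$, and there are irreducible morphisms $M(b,\lambda,r) \to M(b,\lambda,r+1)$ and $M(b,\lambda,r+1) \to M(b,\lambda,r)$ assembling these modules into a component of the form $\mathbb{Z}A_\infty/\langle \tau\rangle$. Moreover, $M(b,\lambda,1)$ is the quasi-simple sitting at the mouth, and $M(b,\lambda,r)$ admits a filtration whose factors are all isomorphic to $M(b,\lambda,1)$.

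Once this is in place, the corollary follows immediately from the preceding proposition: taking $M = M(b,\lambda,1)$ and $N = M(b,\lambda,r)$, we have $M, N$ in the same homogeneous tube of rank $1$, so $\D(M(b,\lambda,1)) = \D(M(b,\lambda,r))$.

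The main (and essentially only) obstacle is justifying the structural claim about the tube; everything else is a one-line invocation of the previous proposition. In the write-up I would simply cite \cite{BR87,WW85} for the tube structure of band modules over special biserial algebras and then conclude.
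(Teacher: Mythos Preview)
Your proposal is correct and is exactly the argument the paper has in mind: the paper states just before the proposition that band modules form homogeneous tubes, proves the proposition for arbitrary modules in a rank-one homogeneous tube, and then records the corollary as an immediate consequence without further proof. Your write-up simply makes explicit the tube structure (with the appropriate citations to \cite{BR87,WW85}) that the paper already asserted in the preamble to the proposition.
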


\begin{remark} \label{rem: band scalars}
Observe that, for $\lambda$, $\mu \in K^\ast$, there is a bijection between submodules of $M(b, \lambda, 1)$ and submodules of $M(b, \mu, 1)$ which matches submodules which have the same composition factors. 
It follows that $\D({M(b, \lambda, 1)})=\D({M(b, \mu, 1)})$.
\end{remark}

We may ``de-diagonalise'' proper string and band submodules of a band module as we did for string modules in Proposition~\ref{prop: nondiagonal same dim}. By Corollary~\ref{cor: band dimension}, it suffices to consider band modules of the form $M(b, \lambda, 1)$.

\begin{proposition} \label{prop: nondiagonal same dim band} 
Let $M= M(b, \lambda, 1)$ be a band module and $L$ a proper submodule of $M$ that is a string or a band module. Then there exists a submodule $\tilde{L}$ of $M$ such that $\tilde{L}$ is a direct sum of string modules, $\dimu L = \dimu\tilde{L}$ and $\tilde{L}$ has no composition factors that embed diagonally into $M$.
\end{proposition}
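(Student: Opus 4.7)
The proof will follow the structure of Proposition~\ref{prop: nondiagonal same dim}, splitting into two cases based on whether $L$ is a string submodule or a band submodule of $M = M(b, \lambda, 1)$, and in both cases applying the morphism descriptions from \cite{CB89,Kra91} recalled in Section~\ref{sec:background}.

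First I would handle the case where $L = M(u)$ is a string submodule. By the description of $\homa(M(u), M(b, \lambda, 1))$, the inclusion $f \colon L \hookrightarrow M$ decomposes as a finite linear combination $\sum_{j \in J} \lambda_{j} f_{\sigma_{j}}$ where each $\sigma_{j}$ is a factor substring of $u$ and a submodule substring of the infinite string $^\infty b^\infty$. Since $L$ is a proper submodule of the band module $M$, no $\sigma_{j}$ can be so long that it wraps the full band more than the allowed number of times. I would then repeat the covering-and-overlap resolution argument of Lemma~\ref{lem: nondiagonal string sub} verbatim: choose inductively an ordered subfamily $R = \{\rho_{0}, \rho_{1}, \dots\}$ covering $u$ with no triple overlaps, each $\rho_{i}$ extending as far to the right as possible; then resolve the remaining double overlaps by chopping off the overlapping prefix of each $\rho_{i}$ to produce new submodule substrings of $^\infty b^\infty$, which by the structure of $M$ then assemble into a genuine non-diagonally embedded submodule $\tilde{L} \hookrightarrow M$ that is a direct sum of string modules with $\dimu \tilde{L} = \dimu L$.

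Second I would treat the case $L = M(v, \mu, n)$ a band submodule. Since $L$ is a proper submodule of $M = M(b, \lambda, 1)$, we cannot have simultaneously $v = b$ as bands and $\mu = \lambda$: a band module $M(b, \lambda, n)$ embeds into $M(b, \lambda, 1)$ only when $n = 1$, in which case the embedding is the identity and $L$ is not proper. Hence $\homa(L, M)$ is spanned by maps $f_{(\sigma_{j}, \phi_{j})}$ where $\sigma_{j}$ is a factor substring of $^\infty v^\infty$ and a submodule substring of $^\infty b^\infty$, and the analogue of Lemma~\ref{lem:band_overlap}\eqref{op:band_overlap:cover} applies: the $\sigma_{j}$ can be arranged to cover a substring of $^\infty v^\infty$ of the form $v^{n}$. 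I would then mimic Lemma~\ref{lem: nondiagonal band sub} to replace $L$ by the string module $M(u)$ with $u = v^{n-1} v'$ (after a suitable rotation of $v$), obtaining a genuine submodule $L' \hookrightarrow M$ with $\dimu L' = \dimu L$. Applying the first case to $L'$ then produces the desired $\tilde{L}$.

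The main obstacle I anticipate is checking that the covering-and-chopping construction in the string case continues to yield legitimate submodules of the \emph{band} module rather than morphisms that only exist at the level of the universal cover $^\infty b^\infty$. Concretely, when resolving an overlap by replacing $f_{\rho_{0}}$ by $f_{\rho'_{0}}$ along a word decomposition $\rho_{0} = \rho'_{0} \epsilon_{0} \tau$, I need to verify that $\rho'_{0}$ is still a submodule substring of $^\infty b^\infty$ whose corresponding map genuinely lands in $M(b, \lambda, 1)$ (not simply in its universal cover), and that the arrow of $b$ which carries the Jordan block contributes correctly to the direct sum decomposition. This amounts to the same combinatorial verification as in the string case using that $\rho'_{0}$ inherits its submodule-substring status from $\rho_{0}$, but it requires care at the seam of $b$; once this is in place, the induction on the number of summands of the map proceeds exactly as in Lemma~\ref{lem: nondiagonal string sub}.
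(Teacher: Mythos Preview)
Your proposal is correct and follows essentially the same approach as the paper: split into the string and band submodule cases, invoke the Crawley-Boevey/Krause basis for the relevant Hom-spaces, and reduce to the earlier Lemmas~\ref{lem: nondiagonal string sub}, \ref{lem:band_overlap}, and \ref{lem: nondiagonal band sub}. Your argument is in fact more detailed than the paper's own proof, which simply asserts that ``the argument of Lemma~\ref{lem: nondiagonal string sub} applies'' and ``we then may proceed as in Lemma~\ref{lem:band_overlap} and Lemma~\ref{lem: nondiagonal band sub}''; the obstacle you flag about the seam of $b$ is a genuine point of care that the paper glosses over as well.
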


\begin{proof}
The proof is analogous to the proof of Proposition~\ref{prop: nondiagonal same dim}. 
Indeed, if $L= M(v)$ is a string module then the map $L \hookrightarrow M$ is a linear sum $\sum_{j \in J} \lambda_j f_{\sigma_j}$ for some finite set $J$, where $\sigma_j$ is a factor substring of $v$ and a submodule substring of the infinite string $^\infty b ^\infty$ and the argument of Lemma~\ref{lem: nondiagonal string sub} applies.

Now we consider the case where $L = M(c, \mu, m)$ is a band module. If $c=b$ and $\mu = \lambda$ then, since $L$ is a submodule of $M$, we must have that $m=1$ and $L$ is not a proper submodule. Hence we may assume that either $c\neq b$ or $\mu \neq \lambda$. Then, the map $L \hookrightarrow M$ is a linear sum  $\sum_{j \in J} \lambda_j f_{(\phi_j, u_j)}$ using the description of morphisms between band modules from Section~\ref{sec:background}. We then may proceed as in Lemma~\ref{lem:band_overlap} and Lemma~\ref{lem: nondiagonal band sub}.
\end{proof}

Consequently, when working over a special biserial algebra, we obtain a version of Theorem~\ref{thm:big-small} for band modules. We once again make use of Notation~\ref{not: iota map}.

\begin{theorem} \label{thm:big-small band}
Let $M = M(b, \lambda, r)$ be a band module over a special biserial $K$-algebra $A$ and let $\thin{b}$ be the thin representation corresponding to the band $b$.
Then \[\iota(\mathcal{D}(M)) = \mathcal{D}(\thin{b}) \cap \mathcal{R}.\]
\end{theorem}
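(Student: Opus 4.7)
The plan is to mimic the proof of Theorem~\ref{thm:big-small}, replacing Proposition~\ref{prop: nondiagonal same dim} with its band-module counterpart Proposition~\ref{prop: nondiagonal same dim band}, after first reducing to the case $r = 1$.

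First I would apply Corollary~\ref{cor: band dimension} to reduce to $M = M(b, \lambda, 1)$: since $\D(M(b, \lambda, r)) = \D(M(b, \lambda, 1))$ for every $r$, and since for $r = 1$ the $i$-th entry of $\dimu M$ coincides with the number of occurrences of the vertex $i$ in the band $b$, the map $\iota$ and the cone $\D(\thin{b})$ are compatibly defined, with $d$ equal to the length of $b$.

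The core argument then parallels the proof of Theorem~\ref{thm:big-small}. The defining inequalities of $\D(\thin{b}) \cap \mathcal{R}$, transported via $\iota$, correspond to dimension vectors of submodules of $\thin{b}$ with repeated coordinates identified; equivalently, to dimension vectors of non-diagonally embedded submodules of $M$. For $\bv \in \D(M)$, these inequalities hold automatically, giving $\iota(\bv) \in \D(\thin{b}) \cap \mathcal{R}$, and the membership in $\mathcal{R}$ is immediate from the definition of $\iota$. For the converse inclusion, one needs that every proper submodule $L$ of $M$ admits a non-diagonally embedded submodule $\tilde L$ with $\dimu \tilde L = \dimu L$, which is precisely what Proposition~\ref{prop: nondiagonal same dim band} supplies; the inequality forced by $L$ is then subsumed by the one coming from a submodule of $\thin{b}$.

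The main obstacle lies in the fact that Proposition~\ref{prop: nondiagonal same dim band} is stated only for proper submodules that are string or band modules, so I must verify that this exhausts all proper indecomposable submodules of $M$. Over a special biserial algebra, the indecomposable modules are string modules, band modules, and indecomposable projective-injective modules. An indecomposable projective-injective module $P$ is injective, so any monomorphism $P \hookrightarrow M$ splits; the indecomposability of $M$ then forces $P \cong M$, which is excluded for proper submodules. Decomposing an arbitrary proper submodule of $M$ into indecomposables, each summand is therefore a string or band module, and Proposition~\ref{prop: nondiagonal same dim band} applies summand by summand to produce the required non-diagonally embedded replacement. Since the defining inequalities of a stability space depend only on the dimension vectors of submodules, this concludes the proof.
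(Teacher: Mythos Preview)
Your proposal is correct and follows exactly the approach the paper intends. The paper does not spell out a proof of Theorem~\ref{thm:big-small band} beyond the word ``Consequently'', relying on the reader to transport the proof of Theorem~\ref{thm:big-small} with Proposition~\ref{prop: nondiagonal same dim band} in place of Proposition~\ref{prop: nondiagonal same dim}; your write-up does precisely this, and your explicit reduction to $r=1$ via Corollary~\ref{cor: band dimension} together with the exclusion of projective-injective summands are the natural details one has to supply.
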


\begin{remark}
The procedure for finding non-diagonally embedded submodules which have the same dimension vectors as diagonally embedded submodules also extends to tree modules in the sense of \cite{CB89}, provided one assumes that every submodule of the tree module is either a tree module or a band module.
\end{remark}

\subsubsection{Bands, strings, and posets}

In the manner of Section~\ref{sect:order_polytopes}, we may consider the quivers given by strings and bands as posets, which allows us to make the following observations.

\begin{proposition} \label{prop:bandstab}
Let $N = M(b, \lambda, r)$ be a band module over a special biserial algebra $A$ such that $M(b, \lambda, 1)$ is a thin module. 
Then for any two  distinct strings $v, v'$ that are obtained from $b$ by removing an arrow, we have that \[\mathcal{D}(N) = \pspan{ \mathcal{D}(M(v)), \mathcal{D}(M(v'))}.\]
Moreover $\D(N) = \D(M(v))$ if and only if $v$ and $b$ are isomorphic as posets.
\end{proposition}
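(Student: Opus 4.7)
My plan is to reduce everything to the thin case via Theorem~\ref{thm:big-small} and Theorem~\ref{thm:big-small band}, then invoke Proposition~\ref{prop:stability space of thin modules}. Since $M(b,\lambda,1)$ is thin, its composition factors are pairwise distinct, so the map $\iota$ of Notation~\ref{not: iota map} is simply the inclusion into $\mathbb{R}^{\supp N}$, and the subspace $\mathcal{R}$ imposes no constraints. Corollary~\ref{cor: band dimension} and Theorem~\ref{thm:big-small band} together give $\D(N) = \D(\thin{b})$ once the ambient spaces are identified via $\iota$. Similarly, the strings $v$ and $v'$ visit the same set of distinct vertices as $b$, so $M(v)$ and $M(v')$ are thin, and Theorem~\ref{thm:big-small} gives $\D(M(v)) = \D(\thin{v})$ and $\D(M(v')) = \D(\thin{v'})$.

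Next, applying Proposition~\ref{prop:stability space of thin modules} to each of the thin sincere representations $\thin{b}$, $\thin{v}$, $\thin{v'}$ yields generating sets consisting of the vectors $e_{s(\beta)} - e_{t(\beta)}$ as $\beta$ runs over the arrows of the respective quiver. Writing $(Q_v)_1 = (Q_b)_1 \setminus \{\alpha\}$ and $(Q_{v'})_1 = (Q_b)_1 \setminus \{\alpha'\}$ with $\alpha \neq \alpha'$, we have $(Q_v)_1 \cup (Q_{v'})_1 = (Q_b)_1$, so the union of the generating sets of $\D(M(v))$ and $\D(M(v'))$ equals that of $\D(N)$. This gives the first equality.

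For the second claim, $\D(N) = \D(M(v))$ holds if and only if $e_{s(\alpha)} - e_{t(\alpha)} \in \D(\thin{v}) = \pspan{e_{s(\beta)} - e_{t(\beta)} \mid \beta \in (Q_v)_1}$. A flow-decomposition argument shows that this membership is equivalent to the existence of a directed path in $Q_v$ from $s(\alpha)$ to $t(\alpha)$: any non-negative expression realising $e_{s(\alpha)} - e_{t(\alpha)}$ is a unit flow from $s(\alpha)$ to $t(\alpha)$ in $Q_v$, from which a directed path can be extracted along arrows with positive coefficient, and conversely a directed path furnishes a telescoping combination. Such a path exists precisely when $\alpha$ is a non-covering arrow of $Q_b$, so removing it leaves the transitive closure of the arrows unchanged and the posets of $v$ and $b$ coincide; since these posets are defined on the same underlying vertex set and the poset of $v$ is contained in that of $b$, equality is in this setting equivalent to abstract poset isomorphism. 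The main obstacle will be the bookkeeping required to apply Proposition~\ref{prop:stability space of thin modules} uniformly to $Q_b$, $Q_v$, and $Q_{v'}$, and in particular to verify that the thin hypothesis forces $Q_b$ to be acyclic (otherwise $\thin{b}$ would fail to satisfy the relations of the finite-dimensional algebra $A$), so that Proposition~\ref{prop:stability space of thin modules} genuinely applies to $\thin{b}$.
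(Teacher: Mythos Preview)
Your argument is correct and follows the same route as the paper's proof: both reduce to the thin case via Corollary~\ref{cor: band dimension} and Theorems~\ref{thm:big-small} and~\ref{thm:big-small band}, then read off generators using Proposition~\ref{prop:stability space of thin modules} (the paper phrases this via the equivalent Corollary~\ref{cor:cov_rel}), and handle the second claim by recognising that $e_{s(\alpha)}-e_{t(\alpha)}$ lies in $\D(M(v))$ precisely when the deleted arrow is redundant in the induced poset. One minor correction to your closing remark: acyclicity of $Q_b$ is not a consequence of the thin hypothesis but rather of the definition of a band in a finite-dimensional algebra---if $Q_b$ were a directed cycle then some power $b^k$ would lie in $I$ and fail to be a string.
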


\begin{proof}
By Corollary~\ref{cor: band dimension} and Remark~\ref{rem: band scalars}, for the task of computing $\D(M(b, \lambda, r))$ we may assume that $\lambda = 1$ and $r = 1$. 
Observe that  $\dimu M(b,1,1) = \dimu M(v) = \dimu M(v')$ and that every covering relation in the poset defined by the band $b$ is contained in one of the posets induced by $v$ or $v'$. 
Hence the first statement follows from  Theorems~\ref{thm:big-small} and~\ref{thm:big-small band} and Corollary~\ref{cor:cov_rel}.

For the second part of the statement, without loss of generality suppose that $b=v q_1\alpha q_2$ with $q_1, q_2 \in Q_0$.
Then the arrow $\alpha$ induces the relation $q_1 \geqslant q_2$ in the poset generated by $b$, which in turn induces the vector $e_{q_1} - e_{q_2} \in \D(N)$.
Hence, we have that $e_{q_1} - e_{q_2} \in \D(M(v))$ if and only if $q_1 \geqslant q_2$ in the poset structure induced by $v$. 
In particular, this is equivalent to say that the poset induced by $b$ and $v$ are isomorphic.
\end{proof}

A direct consequence of the previous proposition is the following. (Compare \cite[Lemma~5.2]{Asai}.)

\begin{corollary}
Let $N = M(b, \lambda, r)$ be a band module over a special biserial algebra and let $v$ be a string obtained from $b$ by removing one arrow. 
Then $\D(M(v))\subseteq \D(N)$.
\end{corollary}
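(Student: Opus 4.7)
The plan is to derive this corollary directly from Proposition~\ref{prop:bandstab}, which has just been established. Given a band $b$ (which necessarily consists of at least two arrows, so that removing a single arrow yields a non-empty string), and given the specified string $v$ obtained by removing one arrow, I would first select a second string $v'$ distinct from $v$ obtained by removing a different arrow of $b$. Such a $v'$ always exists because $b$ has at least two arrows to choose from.

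Next, I would apply Proposition~\ref{prop:bandstab} to the pair $(v, v')$, yielding the equality
\[
\D(N) \;=\; \pspan{\D(M(v)), \D(M(v'))}.
\]
Since $\D(M(v'))$ contains the zero vector, every element $\bv \in \D(M(v))$ can be written as the non-negative combination $\bv + 0$ with $0 \in \D(M(v'))$. Hence $\bv$ lies in the non-negative span on the right-hand side, which establishes $\D(M(v)) \subseteq \D(N)$.

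Given how immediate the argument is, there is no real obstacle to this proof; the only thing to verify is the existence of the auxiliary string $v'$, which is a structural remark about bands rather than a genuine step. The content of the corollary therefore resides entirely in the preceding proposition, and this statement is best viewed as an observation that unpacks one inclusion of the equality in Proposition~\ref{prop:bandstab}.
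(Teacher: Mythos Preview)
Your argument is correct and matches the paper's approach exactly: the paper presents the corollary as ``a direct consequence of the previous proposition'', and your derivation---choosing an auxiliary $v'$ and noting that $\D(M(v)) \subseteq \pspan{\D(M(v)),\D(M(v'))}$ because $0 \in \D(M(v'))$---is precisely how one unpacks that consequence. Note that, like the paper, you are implicitly inheriting the hypothesis from Proposition~\ref{prop:bandstab} that $M(b,\lambda,1)$ is thin.
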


\section{The wall-and-chamber structure of special biserial algebras}\label{sec:w&cspecial}

In the previous section we described the stability spaces of string and band modules over special biserial algebras.
This is not the first time that the wall-and-chamber structure of special biserial algebras has been studied. 
In fact, it has been shown in \cite{AokiYurikusa} that, for these algebras, the union of all the chambers is a dense set in~$\mathbb{R}^n$.
It was shown in \cite{BST19, Asai} that for any finite-dimensional algebra each chamber is surrounded by $n$ different walls.
Moreover, it follows from the results in \cite{Treffinger2019} that if the stability space of a band is a wall, then this wall is not contained in the closure of a single chamber.
Consequently, we can conclude that if the stability space of a band module is a wall, then it is in the closure of an infinite family of chambers. 
This is equivalent to saying that if the stability space of a band module is a wall, then this wall is the limit of infinite families of walls determined by string modules.
In this section we construct certain infinite families of strings for any given band.
We then use the results of the previous section to show that the stability space of the band module decomposes as a union of cones which are the limits of the stability spaces of these string modules.

Our set-up for the section is as follows. Let $\Lambda = KQ/I$ be a special biserial algebra with $b = \alpha_1 \alpha_2 \dots \alpha_n$ a band. We will first assume that $M(b, \lambda, 1)$ is a thin module and then use this to deduce the general case. We will also be considering the restriction of the stability space of a module to the support of the module, so that we will be writing $\D$ for $\Ds$. We enumerate the vertices of $b$ from left to right as $v_1, \dots, v_n, v_{n+1}$ (so that $v_1 = v_{n+1}$). Hence, we have that arrow $\{s(\alpha_{i}), s(\alpha_{i + 1})\} = \{v_{i}, v_{i + 1}\}$. Then, by Proposition~\ref{prop:stability space of thin modules}, we know that $\D(M(b,1,1)) = \pspan{ S} $ where $S$ consists of the vectors $e_i - e_j$ where $i \to j$ is an arrow in the thin representation $\thin{b}$. We fix $0 \leqslant m < n$ and set $w = \alpha_1 \alpha_2 \dots \alpha_m$ (for $m=0$ we set $w$ to be the trivial word at the vertex $v_1$).

 \begin{remark} \label{rem:band g vect}
Since we are working over a special biserial algebra, the $g$-vector of $M(b, 1, 1)$ can be explicitly computed:
\begin{equation} \label{eqn:band g vector}
\g{M(b, 1, 1)} = \sum_{i=1}^{n} \varepsilon_i e_i,
\end{equation}
where \[ \varepsilon_i=  \left\{ \begin{array}{ll} 1, & v_i \in \tp(M(b, 1, 1)) \\ -1, & v_i \in \soc(M(b, 1, 1)) \\ 0, & \textrm{else.} \end{array} \right. \] Note that $\g{M(b, \lambda, t)} = t\g{M(b, 1, 1)}$ and that $\g{M(b, 1, 1)} \in \D (M(b, 1, 1))$ since $\g{M(b, 1, 1)}$ is the sum of all the vectors $e_{i} - e_{j}$ corresponding to all direct (or all inverse) arrows in $b$. 

We may also explicitly compute the $g$-vector of $M(b^rw)$. Indeed, the $g$-vector of $M(w)$ is given by the following formula
\begin{equation} \label{eqn:string g vector}
 \g{M(w)} = a e_1 + b e_{m + 1} + c e_{m + 2} + d e_n + \sum_{i=2}^{m} \varepsilon_i e_i 
\end{equation} 
where 
\[
a = \left\{ \begin{array}{ll} 1, & \alpha_1 \textrm{ is direct} \\ 0, & \alpha_1 \textrm{ is inverse} \end{array} \right. , 
\qquad
b = \left\{ \begin{array}{ll} 1, & \alpha_m \textrm{ is inverse and } m \neq 0 \\ 0, & \textrm{else} \end{array} \right. ,
\]\[
c = \left\{ \begin{array}{ll} 0, & \alpha_{m+1} \textrm{ is inverse or } m=0 \\ -1, &  \textrm{else} \end{array} \right. ,
\quad
d = \left\{ \begin{array}{ll} 0, & \alpha_n \textrm{ is inverse} \\ -1, & \alpha_n \textrm{ is direct} \end{array} \right. ,
\]
and the $\varepsilon_i$ are as above. 
It is then also not hard to see that the following holds
\begin{equation} \label{eqn: brw g vector}
\g{M(b^rw)} = r \g{M(b,1,1)} + \g{M(w)}. 
\end{equation} 
\end{remark}

Thus the $g$-vector of $M(w)$ encodes the information of the orientation of the arrows $\alpha_i$ for $i \in \{1,m,m+1,n\}$. We make use of Notation~\ref{not: iota map} again.

\begin{proposition} \label{prop:band string 2}
 We have that 
\[ \D(M(bw)) = \pspan{S' \cup \{\bx\} },\] where $S' \subset S$ is the set of generating vectors $\D(M(b, 1, 1))$ corresponding to arrows of $\thin{b}$ except for $\alpha_{m + 1}$ and $\alpha_{n}$, and 
\[
\bx = \left\{ \begin{array}{ll} \g{M(w)}, & w \textrm{ is a factor substring of } b \\ -\g{M(w)^{\mathrm{op}}}, & w \textrm{ is a submodule substring of } b \\ 0, & \textrm{else.} \end{array} \right.
\] Here $M(w)^{\mathrm{op}}$ is the string module corresponding to the walk $w$ over the opposite algebra.
\end{proposition}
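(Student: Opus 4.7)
The plan is to apply Theorem~\ref{thm:big-small} and Proposition~\ref{prop:min_adm_sum} to compute $\iota(\mathcal{D}(M(bw))) = \mathcal{D}(\thin{bw}) \cap \mathcal{R}$ by identifying the minimal admissible sums of the generators of $\mathcal{D}(\thin{bw})$. By Proposition~\ref{prop:stability space of thin modules}, these generators are $r_i^{(1)}$ for each arrow $\alpha_i$ of the first copy of $b$ inside $bw$ (with $i \in \{1,\ldots,n\}$) and $r_j^{(2)}$ for each arrow $\alpha_j$ of the second copy of $w$ inside $bw$ (with $j \in \{1,\ldots,m\}$). Here I label positions of $\thin{bw}$ as $p_1,\ldots,p_{n+m+1}$, so that $p_i$ and $p_{n+i}$ are paired for $i \in \{1,\ldots,m+1\}$, while $p_{m+2},\ldots,p_n$ are unpaired.

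I would first observe that for $i \in \{m+2,\ldots,n-1\}$ the generator $r_i^{(1)}$ only involves unpaired positions and is therefore itself an admissible sum, while for $i \in \{1,\ldots,m\}$ the sum $r_i^{(1)} + r_i^{(2)}$ is a minimal admissible sum because neither summand lies in $\mathcal{R}$ alone. Under $\iota^{-1}$, these two families descend precisely to the elements of $S'$.

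To find the remaining minimal admissible sums, I would set up the admissibility equations for a general non-negative combination $\sum c_i r_i^{(1)} + \sum d_j r_j^{(2)}$. Let $\sigma_i \in \{\pm 1\}$ encode the orientation of $\alpha_i$ (so that $r_i^{(1)} = \sigma_i(e_{p_i} - e_{p_{i+1}})$). Balancing coefficients at each of the $m+1$ pairs of repeated positions reduces to $\sigma_n c_n = -\sigma_{m+1} c_{m+1}$ together with the propagation relations $\sigma_i(d_i - c_i) = \sigma_n c_n$ for $i \in \{1,\ldots,m\}$. When $\sigma_n = \sigma_{m+1}$, which by the conventions of factor/submodule substrings is precisely the ``else'' case, non-negativity forces $c_n = c_{m+1} = 0$ and $d_i = c_i$, so the admissible sums decompose into the two families already identified and no new generator is produced; this matches $\bx = 0$. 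When $\sigma_n \neq \sigma_{m+1}$, setting $c_n = c_{m+1} = 1$ and taking the minimal non-negative $c_i,d_i$ satisfying the propagation relations gives a single further extremal ray.

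The main obstacle is to identify this extremal ray, under $\iota^{-1}$, with the claimed vector $\bx$. I would compute the coefficient of the projected sum at each vertex of $b$ one at a time: for an interior vertex $v_i$ with $2 \leqslant i \leqslant m$ the contribution simplifies, by a case analysis on the four possible orientations of the pair $(\alpha_{i-1},\alpha_i)$, to exactly $\varepsilon_i$, matching the top/socle assignment in $M(w)$; and the contributions at the four distinguished vertices $v_1, v_{m+1}, v_{m+2}, v_n$ come out to the values $a, b, c, d$ from Remark~\ref{rem:band g vect}. This yields $\iota(\bx) = \iota(\g{M(w)})$ in the factor substring case. The submodule substring case then follows by dualising: $w$ is a submodule substring of $b$ if and only if $w$ is a factor substring of $b$ read over the opposite algebra, so the preceding argument applied to $A^{\mathrm{op}}$ gives $\g{M(w)^{\mathrm{op}}}$, and the overall sign flip between stability spaces over $A$ and $A^{\mathrm{op}}$ accounts for the leading minus sign in $\bx = -\g{M(w)^{\mathrm{op}}}$.
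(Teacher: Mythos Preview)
Your approach is essentially the same as the paper's: both reduce to Theorem~\ref{thm:big-small} and Proposition~\ref{prop:min_adm_sum}, identify the elements of $S'$ as minimal admissible sums, and then solve the remaining admissibility constraints to locate the one additional extremal ray. Your use of the orientation signs $\sigma_i$ packages the case analysis more cleanly than the paper's step-by-step propagation, but the logic is identical, including the observation that $\sigma_n = \sigma_{m+1}$ exactly characterises the ``else'' case and forces $c_n = c_{m+1} = 0$.

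There is one point where your write-up, as it stands, does not go through. You close the factor-substring case by asserting that the values of the projected sum at $v_1, v_{m+1}, v_{m+2}, v_n$ are the constants $a, b, c, d$ of Remark~\ref{rem:band g vect}. They are not: in the factor case one has $\sigma_n = -1$, and the coefficient at $p_n$ in your minimal admissible sum is $-\sigma_{n-1}c_{n-1} + \sigma_n c_n = 0 + (-1)\cdot 1 = -1$, whereas the formula in Remark~\ref{rem:band g vect} gives $d = 0$. (Concretely, for the $\widetilde{A}_3$ quiver of Example~\ref{ex:convergence} with $b = \alpha\beta\overline{\gamma}\overline{\delta}$ and $m=1$, your sum projects to $e_1 - e_3 - e_4$, while the displayed formula for $\g{M(w)}$ yields only $e_1 - e_3$.) The discrepancy is with the formula in the remark, not with your computation: the actual $g$-vector of $M(w)$, obtained from its minimal projective presentation over the special biserial algebra, does have a $-1$ at $v_n$ here, coming from the arm of $P(v)$ (for the relevant top vertex $v$ of $M(w)$) that exits the support of $w$ along the arrow underlying $\alpha_n$. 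So your identification $\bx = \g{M(w)}$ is correct, but to justify it you should compare directly with the projective presentation of $M(w)$ rather than with the displayed constants $a,b,c,d$. The paper's own proof is equally informal at this step (and contains a slip, writing $\g{M(bw)}$ for what must be $\g{M(w)}$), so this is a shared loose end rather than a defect particular to your argument.
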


\begin{proof}
 Observe that $\D(\thin{bw})$ is a cone in \[\mathbb{R}^{n+m+1} = \lspan{e^1_1, e^1_2, \dots, e_n^{1}, e^2_1, e^2_2, \dots, e^2_{m+1}}\] and that by Theorem~\ref{thm:big-small} we know that 
 \[\iota(\D(M(bw))) = \D(\thin{bw}) \cap \lspan{e_1^1 + e_1^2, e_2^1 + e_2^2, \dots, e_{m + 1}^1 + e_{m + 1}^2, e_{m + 2}^{1}, \dots, e_{n}^{1}}.\] Moreover, by Proposition~\ref{prop:min_adm_sum}, the ray vectors of $\iota(\D(M(bw)))$ correspond to minimal admissible sums of ray vectors of $\D(\thin{bw})$. Now, clearly every element of $\iota(S')$ is already a minimal admissible sum, and so $\pspan{\iota(S')} \subset \D(M(bw))$.  
 Indeed, for $1\leq k \leq m$ the sum of the vectors corresponding to $\alpha_k^1$ and $\alpha_k^2$ is a minimal admissible sum; and the vector corresponding to $\alpha_l^1$ is a minimal admissible sum for $m+1<l<n$. 
 
We first verify the claim for when $w$ is a factor substring of $b$. Observe that this is the case precisely when $\alpha_n$ is inverse and $\alpha_{m+1}$ is direct.  
 Let $\bv$ be a minimal admissible sum such that $\bv \not\in \pspan{\iota(S')}$, which will be of the form 
 \[ \bv = \lambda (e^2_1 - e^1_n) + \mu (e_{m+1}^1 - e_{m+2}^1) + \bv' \] with $\lambda, \mu \geq 0$ and $\bv'$ in the subspace generated by the vectors corresponding to the arrows $\alpha^1_1, \dots, \alpha^1_{m}, \alpha^1_{m+2}, \dots, \alpha_{n-1}^1, \alpha^2_1, \dots, \alpha^2_m$. We denote by $\lambda^i_j$ the positive coefficient in $\bv'$ of the generating vector corresponding to $\alpha^{i}_{j}$. Note that by the minimality of $\bv$ we immediately have that $0 = \lambda^1_{m+2} = \lambda^1_{m+3} = \dots = \lambda^1_{n-1}$ and that at most one of $\lambda^1_j$ and $\lambda^2_j$ can be non-zero for $1 \leq j \leq m$; else $\bv'$ contains an admissible subsum corresponding to the vectors in $S'$. 
 
 We denote the coefficient of $e_i^j$ in $\bv$ by $\varepsilon_i^j$. Note that since $\bv$ is an admissible sum, we must have that $\varepsilon_i^1 = \varepsilon_i^2$ for $1 \leq i \leq m$. 
 
 First suppose that $\lambda = 0 = \mu$, so that $\bv = \bv'$, and let $1 \leq k \leq m$ be minimal such that $\lambda^1_k$ is non-zero. By scaling, we may assume that $\lambda^1_k =1$. We deal with the situation where $\alpha_k$ is inverse, whence the case where it is direct follows by a similar argument. As $\alpha_k$ is inverse and $k$ is minimal, $\varepsilon_k^1 = -1$. Therefore $\varepsilon_k^2 $ must also be $-1$. We deduce that $\alpha_{k-1}$ must be direct and that $\lambda^2_{k-1} = 1$, since we cannot have that $\lambda_{k}^{2}$ is non-zero whilst $\lambda_{k}^{1}$ is non-zero. Indeed, if $\alpha_{k-1}$ is inverse, then $\varepsilon_k^2 = \lambda^2_{k-1}-\lambda^2_k = \lambda^2_{k-1} - 0 \geq 0$. Now, as the $\varepsilon_i^1$ are $0$ for $1\leq i <k$ by assumption, we must have that $\varepsilon_{k - 1}^{2} = 0$, which forces $\alpha_{k - 2}$ to be direct, with $\lambda_{k - 1}^{2} = 1$. By repeating this argument, we conclude that all arrows $\alpha_i$ are direct with $\lambda^2_i = 1$ for $1\leq i <k$. We obtain a contradiction, since $\alpha_{n}$ is inverse, so we must have $\varepsilon_{1}^{2} = 1$, which is in conflict with the fact that $\varepsilon_{1}^{1} = 0$. If $k = 1$, we obtain that $\varepsilon_{1}^{1} = -1$ and $\varepsilon_{2}^{2} = 1$, which is likewise a contradiction.
 
 Thus, we may suppose that $\lambda =1$. We first consider the case where $\alpha_1$ is direct, that is, $v_1 \in \tp(M(b,1,1))$; if $\alpha_1$ is inverse the argument is similar. 
 As $\bv$ is admissible, $\varepsilon_1^2 = \lambda + \lambda^2_1 = 1+ \lambda^2_1$, which must equal $\varepsilon^1_1 = \lambda^1_1 \geq 0$. Since $\lambda^2_1$ and $\lambda^1_1$ cannot both be non-zero, we deduce that $\lambda^1_1 = 1$ and $\lambda_1^2 =0$. 
 
 Whilst the following arrow remains direct, we continue with similar reasoning. Indeed, suppose that $\alpha_2$ is direct. Then $\varepsilon^2_2 = \lambda^2_2 \geq 0$ and $\varepsilon^1_2 =  -\lambda^1_1 + \lambda^1_2 = -1 + \lambda^1_2$ must be equal and thus, $\lambda_2^2 =0$ and $\lambda^1_2 = 1$. 
 
 At some point, an inverse arrow, $\alpha_s$, may be reached. Then, $\varepsilon^1_s = -\lambda_{s-1}^1 - \lambda^1_s = -1 - \lambda^1_s \leq -1$ and $\varepsilon^2_s = - \lambda^2_s $ are equal, from which we deduce that $\lambda^2_s = 1$ and $\lambda_s^1 = 0$. Then, whilst the subsequent arrows are inverse, we repeat the dual argument to the above one for sequences of direct arrows. 
 
  
By continuing this argument we reach the following: For $1 \leq j \leq m$, if $\alpha_j$ is direct, then $\lambda^1_j = 1$ and $\lambda_j^2 = 0$; and if $\alpha_j$ is inverse, then $\lambda^1_j =0$ and $\lambda_j^2 = 1$. 

It remains to consider the coefficients of $e_{m+1}^1$ and $e_{m+1}^2$. If $\alpha_m$ is direct, then $\varepsilon^1_{m+1} = -\lambda^1_m + \mu = -1 + \mu$ and $\varepsilon^2_{m+1} = -\lambda^2_m = 0$ and therefore $\mu = 1$. If $\alpha_m$ is inverse, then $\varepsilon^1_{m+1} = \lambda^1_m + \mu = \mu$ and $\varepsilon_{m+1}^2 = \lambda^2_m = 1$ and thus $\mu =1$. 

Observe that now  for $1 \leq j \leq m$ the coefficients $\varepsilon_j^1$ (and therefore also the $\varepsilon_j^2$) are $1$ (resp.\ $-1$) precisely when $v_j$ is in the top (resp.\ socle) of  $M(bw)$ and that $\varepsilon_n^1$  and $\varepsilon_{m+1}^1$ are $-1$. In light of Remark~\ref{rem:band g vect}, we have obtained precisely the $g$-vector of $M(bw)$. 
 
 If we instead begin with the assumption that $\mu =1$, then by similar reasoning, we reach the same admissible sum as above. Hence, for this case we are done.

 
 The case when $w$ is a submodule substring of $b$ is dual. So we must now deal with the case when either $\alpha_n$ and $\alpha_{m + 1}$ are both either inverse (with the dual situation of both being direct). In this case, we begin with the same logic as above. The same argument shows that $\lambda = 0 = \mu$ results in no minimal admissible sum not contained in the set $S'$. However, in the case of $\lambda =1$, 
after following the same strategy as above and reaching that the $\lambda^1_k$ are $1$ for $\alpha_k$ direct and $0$ otherwise (and vice versa for the $\lambda^2_k$), 
 we reach a contradiction as $\varepsilon^2_{m+1}$ and $\varepsilon^1_{m+1}$ cannot be equal. 
 Indeed,   say $\alpha_m$ is also inverse. Then  $\lambda_m^1 = 0$ and $\lambda_m^2 = 1$ and so
$\varepsilon_{m+1}^1 = \lambda^1_m - \mu = 0 - \mu \leq 0$ and $\varepsilon^2_{m+1}= \lambda_m^2 = 1$ which is a contradiction. 
Dually, if $\alpha_m$ is direct, then $\varepsilon_{m+1}^1 = -\lambda^1_m - \mu = -1 - \mu \leq -1$ and $\varepsilon^2_{m+1}= \lambda_m^2 = 0$ which is also a contradiction. 
\end{proof}

We show in the next result that Proposition~\ref{prop:band string 2} and the explicit expressions of the $g$-vectors from Remark~\ref{rem:band g vect} allow us to describe a minimal generating set of $\D(M(b^rw))$.

\begin{proposition}\label{prop:stringgenerators_general}
\[ \D(M(b^{r}w)) = \pspan{S' \cup \{\bx\} },\] where $r \geqslant 1$, $S' \subset S$ is the set of generating vectors $\D(M(b, 1, 1))$ corresponding to arrows of $\thin{b}$ except for $\alpha_{m + 1}$ and $\alpha_{n}$, and 
\[
\bx = \left\{ \begin{array}{ll} \g{M(b^{r-1}w)}, & w \textrm{ is a factor substring of } b \\ -\g{M(b^{r - 1}w)^{\mathrm{op}}}, & w \textrm{ is a submodule substring of } b \\ 0, & \textrm{else.} \end{array} \right.
\] Here $M(b^{r-1}w)^{\mathrm{op}}$ is the string module corresponding to the walk $\overline{b^{r - 1} w}$ over the opposite algebra.
\end{proposition}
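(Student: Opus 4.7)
My plan is to prove this by a direct generalization of the argument in Proposition~\ref{prop:band string 2}. By Theorem~\ref{thm:big-small} and Proposition~\ref{prop:min_adm_sum}, the task reduces to identifying the minimal admissible sums in $\D(\thin{b^rw})$, so I would first set up coordinates: the arrows $\alpha_1,\dots,\alpha_m$ appear $r+1$ times in $b^rw$, while $\alpha_{m+1},\dots,\alpha_n$ appear $r$ times, giving a cone $\D(\thin{b^rw})$ in a larger space than the $r=1$ case.

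Next I would show that each element of $S'$ arises as a minimal admissible sum: for each arrow $\alpha_i$ with $i\notin\{m+1,n\}$, the sum over all its copies of the corresponding ray vectors lies in $\iota(\D(M(b^rw)))$ and is manifestly minimal. This handles the $\iota(S')$ part.

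For the extra generator $\bx$, I would treat the factor substring case ($\alpha_{m+1}$ direct and $\alpha_n$ inverse) in detail, the submodule case following dually and the residual case producing no new generator. Label the $r$ copies of the ray $e_{v_1}-e_{v_n}$ by $\lambda^{(1)},\dots,\lambda^{(r)}$ and the single ray coming from $\alpha_{m+1}$ by $\mu$. Any minimal admissible sum $\bv \notin \pspan{\iota(S')}$ must have some $\lambda^{(k)}$ or $\mu$ non-zero. Scaling, I may assume $\lambda^{(1)}=1$ and then propagate the admissibility constraints vertex by vertex through each copy of $b$ in exactly the way done in Proposition~\ref{prop:band string 2}: at each vertex the coefficients must agree across copies, and by minimality at most one of the ``two arrow-copies'' at a vertex can be non-zero, forcing the coefficients in the $j$-th copy of $b$ to be determined by the orientation of $\alpha_j$. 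The determined admissible sum then evaluates, using \eqref{eqn:band g vector}, \eqref{eqn:string g vector}, and \eqref{eqn: brw g vector}, to $(r-1)\g{M(b,1,1)} + \g{M(w)} = \g{M(b^{r-1}w)}$. The remaining case where $\alpha_n$ and $\alpha_{m+1}$ have the same direction leads, as in Proposition~\ref{prop:band string 2}, to a contradiction at $v_{m+1}$, so $\bx=0$.

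The main obstacle is the bookkeeping: although each individual step of the coefficient propagation is a direct copy of the $r=1$ argument, verifying that the accumulated contributions from the $r$ successive copies of $b$ assemble precisely to $(r-1)\g{M(b,1,1)}$ (so that together with the final $w$-segment they give $\g{M(b^{r-1}w)}$ rather than some other shifted $g$-vector) requires keeping careful track of how the ``direct/inverse'' orientations of the $\alpha_j$ match up with the $\varepsilon_i$ of Remark~\ref{rem:band g vect} across all copies. Once this identification is nailed down, the conclusion follows immediately.
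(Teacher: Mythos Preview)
Your plan diverges from the paper's proof in a substantive way. The paper does \emph{not} carry out a direct propagation through $r$ copies; instead it argues by induction on $r$, with Proposition~\ref{prop:band string 2} as the base case. For a ray vector $\bz\notin\pspan{S'}$ it first normalises so that $\ip{\bz,(1,\dots,1)}=-1$ (using that $M(\alpha_1\dots\alpha_{n-1})$ is a submodule of $M(b^rw)$), then verifies directly from the defining inequalities that $\bz-\g{M(b,1,1)}\in\D(M(b^{r-1}w))$, and finally invokes the inductive hypothesis to conclude $\bz=\g{M(b^{r-1}w)}+\by$ with $\by\in\pspan{S'}$. The ``neither factor nor submodule'' case is handled by a similar reduction straight to $r=1$. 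No coordinate propagation is done beyond the base case.

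Your direct approach is plausible in spirit but the sketch has a gap. First a slip: in $b^rw$ the arrows $\alpha_{m+1}$ and $\alpha_n$ each occur $r$ times, not once, so there are $r$ coefficients $\mu^{(1)},\dots,\mu^{(r)}$ as well as $\lambda^{(1)},\dots,\lambda^{(r)}$. More seriously, the engine of the $r=1$ propagation was the observation that minimality forces \emph{at most one} of $\lambda_j^1,\lambda_j^2$ to be non-zero; together with the single equation $\varepsilon_j^1=\varepsilon_j^2$ this pinned down the next coefficient. For $r>1$ there are $r+1$ (or $r$) copies of each arrow and minimality only says that \emph{not all} of $\lambda_j^{(1)},\dots,\lambda_j^{(r+1)}$ are non-zero, while admissibility now imposes the system $\varepsilon_j^{(1)}=\dots=\varepsilon_j^{(r+1)}$. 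These constraints no longer determine the next step uniquely, so your ``propagate vertex by vertex exactly as in the base case'' does not go through as stated. One can probably rescue the direct argument by a more careful support analysis (using that $\D(T(b^rw))$ is simplicial), but that is an additional idea you have not supplied. The paper's inductive reduction sidesteps this entirely, which is precisely what makes it cleaner than the bookkeeping you anticipated.
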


\begin{proof}
All elements of $\iota(S')$ can be seen as minimal admissible sums of ray vectors of $\D(T(b^{r}w))$. This is because the vertices $v_{1}, \dots, v_{m}, v_{m + 1}$ are covered $r + 1$ times by $b^{r}w$, whilst the vertices $v_{m + 2}, \dots, v_{n}$ are only covered $r$ times. Given an arrow in $b$ between two vertices repeated $r + 1$ times, taking the sum of the $r + 1$ generating vectors corresponding to the arrows in $T(b^{r}w)$ gives a minimal admissible sum. The same applies for arrows in $b$ between vertices repeated $r$ times. The only arrows in $b^{r}w$ between vertices which are not repeated the same number of times are $v_{m + 1} \xrightarrow{\alpha_{m + 1}} v_{m + 1}$ and $v_{n} \xrightarrow{\alpha_{n}} v_{1}$. Hence, $\pspan{S'} \subseteq \D(M(b^{r}w))$ by Proposition~\ref{prop:min_adm_sum}.

Firstly we will prove the rest of statement for when $w$ is a factor substring of $b$, that is,  $\alpha_n$ is inverse and $\alpha_{m+1}$ is direct.  
The method of proof is induction on $r$; the base case, $r=1$, was shown in Proposition~\ref{prop:band string 2}.
Let $\bz \in \D(M(w^r))$ be a ray vector and we write $\bg := \bg^{M(b,1,1)}$. 

\textbf{Step 1: Either $\bz \in S'$ or we may assume $\ip{\bz, (1,1,\dots,1)} = -1 $.} Consider the string $\sigma = \alpha_1\alpha_2\dots \alpha_{n-1}$ then $b^rw = \sigma \alpha_n b^{r-1}w$. By assumption, we have that the arrow $\alpha_n$ is inverse and thus $M(\sigma)$ is a submodule of $M(w^r)$. Since $\dimu M(\sigma) = (1,1,\dots,1)$ we have that $\ip{\bz, (1,1,\dots,1)} \leq 0$. Let us show that  $\ip{\bz, (1,1,\dots,1)} = 0$ then $\bz \in S'$. Indeed, if this were the case we will show that then $\bz \in \D (M(b,1,1))$ from which we will verify our claim.  Notice that $\dimu M(b,1,1) = (1,1,\dots,1)$. Let $L$ be a proper submodule of $M(b,1,1)$. By Proposition~\ref{prop: nondiagonal same dim band} we may assume that $L = M(\tau)$ is a string module and $\tau$ is a submodule substring of $b$. Hence $\tau$ is also a submodule substring of $b^rw$ and so $\ip{\bz, \dimu L } \leq 0$. This shows that if $\ip{\bz, (1,1,\dots,1)} =0$ then $\bz \in \D(M(b,1,1)) = \pspan{ S' \cup \{ (e_1 - e_n), (e_{m+1} -e_{m+2})\}}$. So we have that there are $\lambda, \mu >0$ and $\by \in \pspan{S'}$ such that 
\[ \bz = \lambda(e_1-e_n) + \mu (e_{m+1} -e_{m+2}) + \by. \]
  Now,
 \begin{align*}
 0 &= \ip{\bz, \dimu M(b^rw) } 
 \\&= \ip{\lambda(e_1-e_n) + \mu (e_{m+1} -e_{m+2}) + \by, (r+1, \dots, r+1, r, \dots, r) } 
 \\&= \lambda \ip{e_1-e_n, (r+1, \dots, r+1, r, \dots, r)} 
 \\& \hphantom{= } + \mu \ip{e_{m+1} -e_{m+2}, (r+1, \dots, r+1, r, \dots, r)} + 0 
 \\&= \lambda + \mu
\end{align*}  so $\lambda = \mu =0$ and $\bz = \by \in \pspan{S'}$ and since $\bz$ is a ray vector, $\bz \in S'$. Therefore if $\bz \not\in \pspan{S'}$ then $\ip{\bz, (1,1,\dots,1)} <0$ and so, after scaling, we may assume that $\ip{\bz, (1,1,\dots,1)} = -1$.

\textbf{Step 2: $\bz -\bg \in \D(M(b^{r-1}w))$.} We verify this directly. Firstly, we see that
\begin{align*} 
&\ip{\bz-\bg, \dimu M(b^{r-1}w} \\ =& \ip{ \bz,(r, \dots, r, r-1, \dots, r-1)} - \ip{\bg,(r, \dots, r, r-1, \dots, r-1) } 
\\ =& \left[ \ip{\bz, (r+1, \dots, r+1, r, \dots, r)} - \ip{\bz, (1,1,\dots,1)} \right] 
\\ & - \left[ \ip{\bg, (1, \dots, 1, 0,\dots,0)} + (r-1)\ip{\bg,(1,\dots,1)} \right]
\\  =& 0 +1 -1 +0 = 0, 
\end{align*}
where for the computation of the third inner product, we used the fact that $w$ has one more vertex in the top of $b$ than the socle of $b$, since it is a factor substring of $b$; and for the fourth that $\g{M(b,1,1)} \in \D(M(b,1,1))$ by  Remark~\ref{rem:band g vect}.

Now let $L$ be a proper submodule of $M(b^{r-1}w)$. By Proposition~\ref{lem: nondiagonal string sub}, we may assume up to dimension vector, that $L$ is a string module corresponding to a submodule substring $\sigma$ of $b^{r-1}w$ with word decomposition $b^{r-1}w = u \sigma u'$ which exhibits $\sigma$ as a submodule substring of $b^{r-1}w$. 

If $u$ is the empty word then, as $u'$ must be non-empty,  then the word  decomposition $b^r w =  u \sigma u' \alpha_{m+1} \dots \alpha_n \alpha_1 \dots \alpha_m $ shows $\sigma$ as a submodule substring of $b^rw$. Thus $\ip{\bz, \dimu L } \leq0$. Observe that $\ip{\bg, \dimu L} =0$ and so $\ip{\bw - \bg, \dimu L }\leq0$.

In the case that the word $u'$ is non-empty we may assume $u$ is non-empty too, else we can repeat the above argument. Now, the word decomposition $b^rw=bu \sigma u'$ shows $\sigma$ as a submodule substring of $b^rw$ so $\ip{\bz, \dimu L} \leq 0$. Note that, in this case, $\ip{\bg, \dimu L} \in \{0,-1\}$ so we must show that $\ip{\bz, \dimu L } <-1$.  Observe that, since $u$ and $u'$ are non-empty, there is a factor substring $\tau$ of $b^rw$ such that $\dimu L + \dimu M(\tau) = k(1,\dots, 1)$ for some $1\leq k \leq r$. Indeed, if $\alpha_a$ is the last arrow of $u$ and $\alpha_b$ is the first arrow of $u'$ then set $\tau = \alpha_{b+1} \alpha_{b+2} \dots \alpha_n \alpha_1 \dots \alpha_{a-1}$. Therefore, since $\ip{\bz, \dimu M(\tau)} \geq 0$ we have that 
\[ \ip{\bz, \dimu L} = k\ip{\bz,(1,1,\dots, 1)} - \ip{\bz, \dimu M(\tau)} \leq -k. \] We deduce that $\ip{\bz- \bg, \dimu L} \leq 0$ and so $\bz \in \D (M(b^{r-1}w))$. 

\textbf{Step 3: Inductive argument.}
By Step 2 we have that $\bz-\bg \in \D(M(b^{r-1}w))$. By our inductive hypothesis, we have that 
\[ \bz - \bg = \lambda (\bx+ (r-2)\bg) + \by \]  where $\bx= \g{M(w)}$, $\lambda >0$ and $\by \in \pspan{S'}$.  Now
\begin{align*}
0 &= \ip{\bz, \dimu M(b^rw)} 
\\ & = \ip{  \lambda (\bx+ (r-2)\bg) + \bg + \by, (r+1, \dots, r+1, r, \dots, r) } 
\\ &= \lambda \ip{\bx, (r+1, \dots, r+1, r, \dots, r)}  
\\& \hphantom{=} + (\lambda (r -2) +1) \ip{ \bg, (r+1, \dots, r+1, r, \dots, r)} + 0 \\
&= \lambda(1-r) + (\lambda (r -2) +1)(1) \\
&= 1 -  \lambda.
\end{align*}  
Thus $\lambda =1$ and $\bz = \bx+(r-1)\bg + \by$. Since, we assumed $\bz$ was a ray vector, we deduce that $\by =0$ which completes the proof. 

The proof of the case when $w$ is a submodule substring of $b$ follows by dualising the above. Finally we deal with the case when $w$ is neither in a factor nor submodule substring of $b$. Let us assume that the arrows $\alpha_n $ and $\alpha_{m+1}$ are both direct, whence the case when they are both inverse  will follow by dualising the argument. Let $\bz \in \D(M(w^r))$ be a ray vector.

\textbf{Step 1': $\ip{\bz, (1,1,\dots,1)} =0$.}   Consider the words 
 \begin{align*}
 \sigma &= \alpha_1 \alpha_2 \dots \alpha_{n-1} \\
 \sigma '&= \alpha_{m+2} \alpha_{m+3} \dots \alpha_n \alpha_1 \dots \alpha_m 
 \end{align*} then the word decompositions $b^rw = \sigma \alpha_n b^{r-1}w$ and $b^rw = b^{r-1}w\alpha_{m+1} \sigma'$ exhibit $M(\sigma)$ and $M(\sigma')$ as a factor module and submodule of $M(b^rw)$ respectively. Since $\dimu M(\sigma) = \dimu M(\sigma') = (1,1,\dots,1)$ we deduce that $\ip{z, (1,1,\dots,1)}=0$. 
 
 \textbf{Step 2': $\bz \in \D(M(bw))$.} Again we show this directly. First,
 \begin{align*} 
\ip{\bz, \dimu M(bw)} =& \ip{ \bz,(2,\dots, 2, 1, \dots, 1)}  
\\ =&  \ip{\bz, (r+1, \dots, r+1, r,\dots, r)} - (r-1) \ip{\bz, (1,1,\dots,1)}  
\\ =&  0 +0  = 0. 
\end{align*}
Now let $L$ be a proper submodule of $M(bw)$. Then, similarly to Step 2, we see that there there is a submodule of $b^rw$ with the same dimension vector as $L$. Thus $\ip{\bz, \dimu L} \leq 0$ which shows that $\bz \in \D(M(bw))$.

\textbf{Step 3': Finish the proof.}
By the base case, we know that $\D(M(bw)) = \pspan{S'}$ and $\bz \in \pspan{S'}$ completes the proof.

\end{proof}

In the following lemma, we show how the stability space of a band module can be covered by simplicial cones. We will later show how these simplicial cones can be given as the limits of stability spaces of string modules.

\begin{lemma}\label{lem:cover}
Let $A$ be a special biserial algebra and let $b = \alpha_1 \dots \alpha_n$ be a band in $A$ such that $M(b, 1, 1)$ is a thin module. Denote by $S$ a minimal generating set for $\D(M(b, 1, 1))$ and let $\bg = \g{M(b, 1, 1)}$ be the $g$-vector of $M(b, 1, 1)$. Then 
\begin{equation}\label{eq:band_stab_space_formula}
\D(M(b, 1, 1)) = \bigcup_{\bv, \bw \in S, \bv \neq \bw} \pspan{(S \setminus \{\bv, \bw\}) \cup \{\bg\}}.
\end{equation}
\end{lemma}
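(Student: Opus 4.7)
The plan is to verify both inclusions in~\eqref{eq:band_stab_space_formula}, with the central ingredient being a linear dependency on $S$. Write $S = \{\bv_i\}_{i=1}^{n}$ with one generator for each arrow of $b$, so that $\bv_i = e_{v_i} - e_{v_{i+1}}$ when $\alpha_i$ is direct and $\bv_i = e_{v_{i+1}} - e_{v_i}$ when $\alpha_i$ is inverse. The first step is to establish the identity
\[ \bg = \sum_{\alpha_i \text{ direct}} \bv_i = \sum_{\alpha_i \text{ inverse}} \bv_i. \]
The equality of the two sums is automatic, since summing $e_{v_i} - e_{v_{i+1}}$ around the cycle gives zero. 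That each sum equals $\bg$ I would verify by a four-case analysis on the pair $(\alpha_{k-1}, \alpha_k)$ of arrows adjacent to each vertex $v_k$, matching the local contribution against the coefficient $\varepsilon_k \in \{-1,0,1\}$ in formula~\eqref{eqn:band g vector}.

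The inclusion $\supseteq$ of~\eqref{eq:band_stab_space_formula} is then immediate: every generator of every cone on the right-hand side lies in $\D(M(b,1,1)) = \pspan{S}$, using Remark~\ref{rem:band g vect} to place $\bg$ there as well.

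For the reverse inclusion I would use an absorbing argument. Given $\bu = \sum_{i=1}^{n}\lambda_i \bv_i \in \pspan{S}$ with $\lambda_i \geqslant 0$, let $D$ and $I$ denote the direct and inverse arrow indices, and set $\mu_D = \min_{i \in D} \lambda_i$, attained at some $i_0$, and $\mu_I = \min_{i \in I} \lambda_i$, attained at some $j_0$. Applying the key identity to rewrite $\mu_D \sum_{i \in D}\bv_i$ and $\mu_I \sum_{i \in I}\bv_i$ each as a multiple of $\bg$ yields
\[ \bu = \sum_{i \in D \setminus \{i_0\}} (\lambda_i - \mu_D) \bv_i + \sum_{i \in I \setminus \{j_0\}} (\lambda_i - \mu_I) \bv_i + (\mu_D + \mu_I) \bg, \]
a non-negative combination of $(S \setminus \{\bv_{i_0}, \bv_{j_0}\}) \cup \{\bg\}$, placing $\bu$ in the cone $\pspan{(S \setminus \{\bv_{i_0}, \bv_{j_0}\}) \cup \{\bg\}}$ as required.

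The main obstacle is ensuring that both $D$ and $I$ are non-empty so that the absorbing step can be performed on both sides. This should follow from finite-dimensionality of $A$: an all-direct (respectively all-inverse) band would give an oriented cycle in $Q$, some power of which would lie in the ideal $I$, contradicting the definition of a band. Once this is in place, the rest of the argument is elementary.
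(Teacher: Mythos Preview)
Your argument is correct, and it takes a genuinely different route from the paper's own proof. The paper argues geometrically: given $\bx \in \D(M(b,1,1))$, it draws a half-line from the interior point $\bg$ through $\bx$ until it hits the boundary at some $\by$; then it invokes Lemma~\ref{lem:facets} and Lemma~\ref{lem:simplicial_cone} to conclude that the facet containing $\by$ is generated by exactly $n-2$ of the ray vectors in $S$, so that $\bx \in \pspan{(S\setminus\{\bv,\bw\})\cup\{\bg\}}$. Your approach is purely linear-algebraic: you exploit the explicit relation $\bg = \sum_{i\in D}\bv_i = \sum_{i\in I}\bv_i$ (which the paper also records in Remark~\ref{rem:band g vect}) to absorb the minimal direct and minimal inverse coefficients into a single $\bg$-term.

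What each buys: the paper's proof ties the lemma to the facial structure developed earlier and explains \emph{why} two generators can always be dropped (each facet corresponds to cutting the cycle at one direct and one inverse arrow). Your proof is shorter, entirely self-contained, and makes no appeal to Lemmas~\ref{lem:simplicial_cone} or~\ref{lem:facets}. It is also slightly sharper: you show that the pair $\{\bv_{i_0},\bv_{j_0}\}$ can always be chosen with one direct and one inverse arrow, so the union in~\eqref{eq:band_stab_space_formula} may be restricted to such pairs. This matches what the paper's facet argument yields implicitly, since a submodule substring of the band is bounded by one direct and one inverse arrow. Your remark that $D$ and $I$ are both non-empty because an all-direct (or all-inverse) band would force some power $b^r$ to contain a zero relation in the admissible ideal is exactly right and is the only place finite-dimensionality enters.
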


\begin{proof}
Recall from Remark~\ref{rem:band g vect} that $\bg \in \D(M(b, 1, 1))$. In fact, as discussed in Remark~\ref{rem:band g vect}, the sum of all the ray vectors of $\D(M(b, 1, 1))$ gives a multiple of $\bg$, which implies that $\bg$ is in the interior of $\D(M(b, 1, 1))$.

First note that it is clear that \[\D(M(b, 1, 1)) \supseteq \bigcup_{\bv, \bw \in S, \bv \neq \bw} \pspan{(S \setminus \{\bv, \bw\}) \cup \{\bg\}},\] since $\D(M(b, 1, 1)) = \pspan{S \cup \{\bg\}}$.
Now choose a point $\bx \in \D(M(b, 1, 1))$. We wish to show that $\bx$ lies in the right hand side of \eqref{eq:band_stab_space_formula}. This is clear if $\bx \in \pspan{\bg}$, so we assume that this is not the case. Hence, consider a line segment connecting $\bx$ with the boundary of $\D(M(b, 1, 1))$ which passes through $\pspan{\bg}$. Let $\by$ be the point where this line segment passes through the boundary of $\D(M(b, 1, 1))$.

The point $\by$ thus lies in a facet of $\D(M(b, 1, 1))$. By Lemma~\ref{lem:facets}, the facet containing $\by$ must be equal to $\D(L) \times \D(M(b, 1, 1)/L)$ for some indecomposable submodule $L$ of $M(b, 1, 1)$ with indecomposable quotient. Since $M(b, 1, 1)$ is thin, we must have that the supports of $L$ and $M(b, 1, 1)/L$ must be trees. By Lemma~\ref{lem:simplicial_cone}, we have that $\D(L)$ and $\D(M(b, 1, 1))$ must both be simplicial cones. Hence, if $L$ has dimension $r$, then the number of generating vectors of the facet containing $\by$ must be $(r - 1) + (n - r - 1)$. Hence, let the facet containing $\by$ be generated as a non-negative linear span of $S \setminus \{\bv, \bw\}$, where $\{\bv, \bw\} \subseteq S$ with $\bv \neq \bw$. Consequently, we have that $\bx \in \pspan{(S \setminus \{\bv, \bw\}) \cup \{\bg\}}$, whence the result follows. 
\end{proof}

In our next result we denote by $B(\bx, \epsilon)$ the open ball of $\mathbb{R}^n$ with centre $\bx$ and radius $\epsilon > 0$. We show that every point in the stability space of a thin band module can be approximated by the stability spaces of families of strings.

\begin{proposition}\label{prop:limit}
Let $A$ be a special biserial algebra and let $b=\alpha_1 \dots \alpha_n$ be a band in $A$ such that $M(b, 1, 1)$ is thin.
Then, for every $\bx \in \D(M(b, 1, 1))$, there exists an infinite family of strings $\{b^{r}w \mid r \in \mathbb{N}\}$ such that, for every $\epsilon > 0$, there is an $r_{\epsilon} \in \mathbb{N}$ such that $B(\bx, \epsilon) \cap \D(M(b^{s}w)) \neq \varnothing$ for every $s \geqslant r_{\epsilon}$.
\end{proposition}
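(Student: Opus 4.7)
The plan is to combine Lemma~\ref{lem:cover} with the explicit description of $\D(M(b^rw))$ from Proposition~\ref{prop:stringgenerators_general}. The decomposition of Lemma~\ref{lem:cover} writes $\bx$ as a non-negative combination of $\bg$ together with $n-2$ generators from a facet of $\D(M(b,1,1))$; I will choose $w$ so that $\D(M(b^rw))$ has precisely those $n-2$ generators together with a third ray that tends to the ray through $\bg$ as $r \to \infty$. This asymptotic convergence, delivered by equation~(\ref{eqn: brw g vector}), will yield the required approximation.

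By Lemma~\ref{lem:cover}, there exist distinct $\bv, \bw \in S$, with $S' := S \setminus \{\bv, \bw\}$, and non-negative scalars $\{\lambda_\mathbf{u}\}_{\mathbf{u} \in S'}$, $\mu$, such that
\[
\bx = \sum_{\mathbf{u} \in S'} \lambda_\mathbf{u} \mathbf{u} + \mu \bg.
\]
The proof of that lemma realises $(\bv, \bw)$ as the generators omitted by a facet of $\D(M(b,1,1))$; by Lemma~\ref{lem:facets} this facet comes from a decomposition of $M(b,1,1)$ into an indecomposable submodule with indecomposable quotient, and inspecting the definition of submodule substring forces the two arrows of $b$ corresponding to $\bv, \bw$ to have opposite orientations. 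After rotating $b$ and swapping the labels $\bv, \bw$ if necessary, we may relabel so that $\alpha_n$ is inverse and $\alpha_{m+1}$ is direct for some $0 \leqslant m < n$; then $w := \alpha_1 \dots \alpha_m$ is a factor substring of $b$, and Proposition~\ref{prop:stringgenerators_general} combined with equation~(\ref{eqn: brw g vector}) yields
\[
\D(M(b^r w)) = \pspan{S' \cup \{\g{M(b^{r-1}w)}\}}, \qquad \g{M(b^{r-1}w)} = (r-1)\bg + \g{M(w)}.
\]

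Now set
\[
\by_r := \sum_{\mathbf{u} \in S'} \lambda_\mathbf{u} \mathbf{u} + \frac{\mu}{r-1}\, \g{M(b^{r-1}w)},
\]
which lies in $\D(M(b^r w))$ because all coefficients are non-negative. Substituting the formula for $\g{M(b^{r-1}w)}$ gives $\by_r - \bx = \tfrac{\mu}{r-1}\, \g{M(w)}$, so $\|\by_r - \bx\| \to 0$ as $r \to \infty$; hence, for any $\epsilon > 0$, there exists $r_\epsilon$ with $\by_s \in B(\bx, \epsilon) \cap \D(M(b^s w))$ for every $s \geqslant r_\epsilon$. The main obstacle is Step~2: one must verify that facet pairs always correspond to opposite-orientation arrow pairs of $b$, and that the band admits a rotation placing these two arrows in the required positions $\alpha_n$ and $\alpha_{m+1}$. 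The degenerate case $\mu = 0$ is immediate since then $\bx \in \pspan{S'} \subseteq \D(M(b^r w))$ for every $r \geqslant 1$, and the alternative in which the orientations force $w$ to be a submodule substring is handled identically, using the third generator $-\g{M(b^{r-1}w)^\mathrm{op}}$ of Proposition~\ref{prop:stringgenerators_general}, which by the analogous $g$-vector computation also equals $(r-1)\bg$ plus a fixed vector.
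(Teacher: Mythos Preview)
Your proof is correct and follows essentially the same strategy as the paper's: invoke Lemma~\ref{lem:cover} to place $\bx$ in a cone $\pspan{S'\cup\{\bg\}}$, rotate $b$ so that the two discarded generators sit at positions $\alpha_n$ and $\alpha_{m+1}$, and then use Proposition~\ref{prop:stringgenerators_general} together with~\eqref{eqn: brw g vector} to see that the extra ray of $\D(M(b^rw))$ tends to the ray through $\bg$. Your argument is in fact slightly more careful than the paper's in two respects: you explicitly construct the approximating point $\by_r$ (the paper only argues convergence of the generating ray), and you spell out why the pair $\{\bv,\bw\}$ produced by Lemma~\ref{lem:cover} necessarily consists of arrows of opposite orientation---a point the paper leaves implicit but which is needed to avoid the degenerate ``$\bx=0$'' branch of Proposition~\ref{prop:stringgenerators_general}.
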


\begin{proof}
By Lemma~\ref{lem:cover}, there are $\bv, \bw \in S$ such that $\bv \neq \bw$ and $\bx \in \pspan{S' \cup \bg}$, where $S' := S \setminus \{\bv, \bw\}$ and $\bg$ is the $g$-vector of $\D(M(b, \lambda, 1))$. Note that, generically, we can assume that $\codim \pspan{S' \cup \{\bg\}} = \codim \D(M(b, 1, 1))$. We can rotate the band $b$ such that $\bv$ corresponds to the arrow $\alpha_{n}$ and $\bw$ corresponds to the arrow $\alpha_{m + 1}$. Then let $w$ be the string $\alpha_{1} \dots \alpha_{m}$. By Proposition~\ref{prop:stringgenerators_general}, we have that \[\D(M(b^{r}w)) = \pspan{S' \cup \{\by + (r-1)\bg\}} = \pspan{S' \cup \left\{\frac{1}{r-1}\by + \bg\right\}}.\] Since $\bx \in \pspan{S' \cup \{\bg\}}$ and $\frac{1}{r-1}\by + \bg \to \bg$ as $r \to \infty$, we have that for every $\epsilon > 0$, there is an $r_{\epsilon} \in \mathbb{N}$ such that $B(\bx, \epsilon) \cap \D(M(b^{r}w)) \neq \varnothing$ for every $s \geqslant r_{\epsilon}$.
\end{proof}

We can then apply this proposition to prove the analogous result in the case where we do not have a thin module.

\begin{theorem}\label{thm:limit_nonthin}
Let $A$ be a special biserial algebra and let $b = \alpha_1 \dots \alpha_n$ be an arbitrary band in $A$.
Then, for every $\bx \in \D(M(b, \lambda, t))$, there exists an infinite family of strings $\{b^{r}w \mid r \in \mathbb{N}\}$ such that, for every $\epsilon > 0$, there is an $r_{\epsilon} \in \mathbb{N}$ such that $B(\bx, \epsilon) \cap \D(M(b^{s}w)) \neq \varnothing$ for every $s \geqslant r_{\epsilon}$.
\end{theorem}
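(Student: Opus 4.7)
The plan is to reduce to the thin case handled in Proposition~\ref{prop:limit} by passing to a covering algebra. By Corollary~\ref{cor: band dimension} and Remark~\ref{rem: band scalars}, the stability space $\D(M(b, \lambda, t))$ equals $\D(M(b, 1, 1))$, so we may assume $M = M(b, 1, 1)$. Let $Q_b$ denote the cyclic quiver with the same shape as the band $b$ and let $A_b = KQ_b/I_b$ be the special biserial algebra obtained by pulling back the ideal $I$ along the natural projection $p \colon Q_b \to Q$. Then $b$ lifts to a band in $A_b$ whose band module $T(b) = M_{A_b}(b, 1, 1)$ is thin, and by Theorem~\ref{thm:big-small band} the map $\iota$ identifies $\D_A(M(b, 1, 1))$ with $\D_{A_b}(T(b)) \cap \mathcal{R}$, where $\mathcal{R} \subseteq \mathbb{R}^d$ is the subspace of vectors whose coordinates are constant on the fibres of $p$.

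Given $\bx \in \D_A(M(b, 1, 1))$, apply Proposition~\ref{prop:limit} to $T(b)$ over $A_b$ at the lifted point $\iota(\bx) \in \D_{A_b}(T(b))$: this produces an infinite family $\{b^r \tilde{w}\}_{r \in \mathbb{N}}$ of strings in $Q_b$ whose stability spaces $\D_{A_b}(M_{A_b}(b^r \tilde{w}))$ approximate $\iota(\bx)$ in $\mathbb{R}^d$. Projecting via $p$, let $w = p(\tilde{w})$; then $\{b^r w\}_{r \in \mathbb{N}}$ is the candidate family of strings in $Q$. The link between the $A_b$-picture and the $A$-picture is the identification
\[
\iota(\D_A(M_A(b^r w))) = \D_{A_b}(M_{A_b}(b^r \tilde{w})) \cap \mathcal{R},
\]
which we establish by combining Theorem~\ref{thm:big-small} applied to $M_A(b^r w)$ and to $M_{A_b}(b^r \tilde{w})$, using that $T(b^r w)$ and $T(b^r \tilde{w})$ coincide as thin representations of the underlying linear quiver. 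Given this, the claimed approximation of $\bx$ in $\mathbb{R}^n$ reduces to approximating $\iota(\bx)$ by elements of $\D_{A_b}(M_{A_b}(b^r \tilde{w})) \cap \mathcal{R}$.

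The main obstacle is that the natural approximating sequence, built from the explicit generators of Proposition~\ref{prop:stringgenerators_general}, typically fails to lie in $\mathcal{R}$. Indeed, expressing $\iota(\bx) = \sum_{\bv \in S'} \mu_{\bv} \bv + \mu_g \bg$ inside the simplicial cover cone of Lemma~\ref{lem:cover} with $\bg = \g{M_{A_b}(b, 1, 1)}$, the natural approximants $\bz_r = \sum_{\bv \in S'} \mu_{\bv} \bv + \mu_g \bg_r$ with $\bg_r = (r-1)^{-1}\by + \bg$ lie in $\pspan{S' \cup \{\bg_r\}}$ and converge to $\iota(\bx)$, but the difference $\bz_r - \iota(\bx) = \mu_g(r-1)^{-1}\by$ typically does not lie in $\mathcal{R}$ because the correction $\by = \g{M_{A_b}(\tilde{w})}$ need not be constant on fibres of $p$. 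Completing the proof therefore amounts to showing that for every large $r$ the cone $\pspan{S' \cup \{\bg_r\}} \cap \mathcal{R}$ contains points close to $\iota(\bx)$; we expect to do this by exhibiting for each $r$ a perturbation of $\bz_r$ inside $\pspan{S' \cup \{\bg_r\}}$ that lies in $\mathcal{R}$ and remains close to $\iota(\bx)$, for instance by a suitable symmetrisation over fibres of $p$ that remains within the cone.
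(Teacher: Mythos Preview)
Your overall strategy coincides with the paper's: reduce to $M(b,1,1)$, pass to the thin setting via $\iota$ and Theorem~\ref{thm:big-small band}, invoke Proposition~\ref{prop:limit} to obtain an approximating family for $\iota(\bx)$, and then descend using the identification $\iota(\D_A(M(b^{r}w))) = \D(M({b'}^{r}w')) \cap \mathcal{R}$. In fact you are more careful than the paper at one point: you spell out that this last identity requires a two-step application of Theorem~\ref{thm:big-small} (once over $A$, once over the covering algebra $A_b$), whereas the paper invokes Theorem~\ref{thm:big-small} once and leaves the reader to supply the comparison of the two coverings.

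Where your proof stalls is precisely where the paper's own proof is silent. The paper writes ``since $\{{b'}^{r}w'\}$ approaches $\bx$, we must have \dots'', which tacitly assumes that convergence of $\D(M({b'}^{r}w'))$ to a cone containing $\iota(\bx)$ forces convergence of $\D(M({b'}^{r}w')) \cap \mathcal{R}$ to a set containing $\iota(\bx)$. You correctly isolate this as the nontrivial step: the approximants $\bz_r$ produced by Proposition~\ref{prop:limit} need not lie in $\mathcal{R}$, and intersecting a converging family of cones with a fixed subspace does not in general commute with taking limits. So you have not introduced a new gap; you have made explicit one that the paper's argument glosses over.

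That said, your proposed resolution (``symmetrisation over fibres of $p$'') is too vague to count as a proof. A more promising route is to avoid the abstract cone-intersection argument altogether and instead transport Proposition~\ref{prop:stringgenerators_general} down to $A$: show directly that $\iota^{-1}(\pspan{S'} \cap \mathcal{R})$ together with the vector $\iota^{-1}(\bg) = \g{M_A(b,1,1)}$ generate pieces of $\D_A(M(b,1,1))$ covering it (an analogue of Lemma~\ref{lem:cover} in $\mathbb{R}^n$), and that the same $\iota^{-1}(\pspan{S'} \cap \mathcal{R})$ together with $(r-1)^{-1}\iota^{-1}(\by') + \g{M_A(b,1,1)}$, for a suitable $\by' \in \mathcal{R}$, sit inside $\D_A(M(b^{r}w))$. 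The point is that $\g{M_A(b,1,1)}$ and $\g{M_A(b^{r-1}w)}$ are already vectors in $\mathbb{R}^n$, and the relation $\g{M_A(b^{r}w)} = r\,\g{M_A(b,1,1)} + \g{M_A(w)}$ from Remark~\ref{rem:band g vect} holds over $A$ without any thinness assumption; this lets you run the limit argument of Proposition~\ref{prop:limit} entirely inside $\mathbb{R}^n$ and sidestep the intersection with~$\mathcal{R}$.
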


\begin{proof}
First recall that, by Corollary~\ref{cor: band dimension}, we have $\D(M(b, \lambda, t)) = \D( M(b, 1, 1))$, so we only need to prove the statement for the case where $\lambda =1$ and $t=1$.
This result can be deduced from Proposition~\ref{prop:limit}. By Theorem~\ref{thm:big-small band} and Notation~\ref{not: iota map}, we have $\iota(\D(M(b, 1, 1))) = \D(T(b)) \cap \mathcal{R}$. Hence, given a point $\bx \in \D(M(b, 1, 1))$, there is a corresponding point $\iota(\bx) \in \D(T(b))$. We know that we can approach with a family of strings $\{{b'}^{r}w' \mid r \in \mathbb{N}\}$ for $T(b)$, by Proposition~\ref{prop:limit}. Consider the corresponding family of strings $\{b^{r}w \mid r \in \mathbb{N}\}$ of $A$. By Theorem~\ref{thm:big-small}, we know that $\iota(\D(M(b^{r}w))) = \D(M({b'}^{r}w')) \cap \mathcal{R}$. Hence, since $\{{b'}^{r}w' \mid r \in \mathbb{N}\}$ approaches $\bx$, we must have that for every $\epsilon > 0$, there is an $r_{\epsilon} \in \mathbb{N}$ such that $B(\bx, \epsilon) \cap \D(M(b^{r}w)) \neq \varnothing$ for every $s \geqslant r_{\epsilon}$.
\end{proof}

The main result of \cite{STV} states that a special biserial algebra is $\tau$-tilting finite if and only if no band module is a brick. 
In particular this implies that in the wall-and-chamber structure of a $\tau$-tilting finite special biserial algebra there are no walls which are determined by band modules. 
In the next result we show that, if there is a wall determined by a band module, then this wall can be seen as the limit of a family of walls determined by string modules. 

\begin{corollary}
Let $A$ be a special biserial algebra and let $b$ be a band such that $\D(M(b, \lambda, t))$ is a wall in the wall-and-chamber structure of $A$. 
Then, for every $x \in \D(M(b, \lambda, t))$, there exists an infinite family of strings $\{w_i \mid i \in \mathbb{N}\}$ such that $\D(M(w_i))$ is a wall and  there is an $i_x \in \mathbb{N}$ such that $B(x, \epsilon)\cap \D(M(w_j)) \neq \varnothing$ for every $j \geqslant i_x$.
\end{corollary}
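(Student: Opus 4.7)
The plan is to deduce the corollary from Theorem~\ref{thm:limit_nonthin}, the additional content being to verify that each stability space $\D(M(b^rw))$ in the approximating family is itself a wall. I would first invoke Theorem~\ref{thm:limit_nonthin} for $b$ and $x$ to produce the family $\{b^rw \mid r \in \mathbb{N}\}$, setting $w_i := b^iw$. It remains to show $\D(M(b^rw))$ has codimension one in $\mathbb{R}^{|Q_0|}$ for each $r$.

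When $M(b,1,1)$ is thin, Proposition~\ref{prop:stringgenerators_general} gives $\D(M(b^rw)) = \pspan{S' \cup \{\bx_r\}}$ with $|S'|+1 = n-1$ linearly independent generators---hence codimension one---as long as $\bx_r$ is non-zero. Non-vanishing of $\bx_r$ occurs precisely when $w$ is a factor or submodule substring of $b$, equivalently when the cut arrows $\alpha_n$ and $\alpha_{m+1}$ chosen in the construction of Proposition~\ref{prop:limit} have opposite orientations. Such a choice is always possible: every band of a special biserial algebra must contain both direct and inverse arrows (else some power of $b$ would lie in the admissible ideal $I$, contradicting that $b^r$ is a string for all $r$), and one refines Lemma~\ref{lem:cover}'s cover to opposite-orientation pairs, which still covers $\D(M(b,1,1))$.

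For the general case where $M(b,1,1)$ need not be thin, I would reduce to the thin case via Theorems~\ref{thm:big-small} and~\ref{thm:big-small band}. The map $\iota$ from Notation~\ref{not: iota map} identifies $\iota(\D(M(b,\lambda,t))) = \D(T(b)) \cap \mathcal{R}$ and $\iota(\D(M(b^rw))) = \D(T(b^rw)) \cap \mathcal{R}'$; the hypothesis that $\D(M(b,\lambda,t))$ is a wall forces transversality of $\mathcal{R}$ with $\D(T(b))$, which propagates to transversality of $\mathcal{R}'$ with $\D(T(b^rw))$, yielding codimension one intersections and hence walls $\D(M(b^rw))$ in the wall-and-chamber structure of $A$.

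The main obstacle is the combinatorial refinement in the thin case: verifying that the sub-cover of $\D(M(b,1,1))$ formed by opposite-orientation cones $\pspan{(S \setminus \{\bv,\bw\}) \cup \{\bg\}}$ still covers $\D(M(b,1,1))$, including its boundary points. Geometrically this is natural---by Lemma~\ref{lem:facets}, each facet of $\D(M(b,1,1))$ decomposes as $\D(L) \times \D(M/L)$ for an indecomposable submodule $L$ with indecomposable quotient $M/L$, which on the combinatorial side forces the cuts at the boundary of $L$ to be at arrows of opposite orientations---but a rigorous verification requires a careful analysis of the simplicial decomposition of $\D(M(b,1,1))$.
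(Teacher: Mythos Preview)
Your strategy differs from the paper's. Rather than verifying directly that each $\D(M(b^{r}w))$ is a wall, the paper argues by a dimension count: Theorem~\ref{thm:limit_nonthin} supplies only countably many string modules (finitely many choices of $w$, each indexed by $r \in \mathbb{N}$) whose stability spaces together approach every point of the $(n-1)$-dimensional cone $\D(M(b,\lambda,t))$; were only finitely many of these stability spaces of codimension one, the rest would lie in linear subspaces of dimension at most $n-2$, and a countable union of such subspaces (together with finitely many hyperplanes distinct from the band's) cannot contain an $(n-1)$-dimensional cone in its closure. This is short and requires no further combinatorics.

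Your route is more constructive and, if completed, would yield the stronger statement that \emph{every} member of a suitably chosen family is a wall. Two steps, however, need more than you have written. First, linear independence of $S' \cup \{\bx_r\}$ does not follow from $\bx_r \neq 0$: what is needed is $\bx_r \notin \mathrm{span}(S')$. This does hold---removing $\alpha_{n}$ and $\alpha_{m+1}$ disconnects the cycle into two arcs, so $\mathrm{span}(S')$ consists of those vectors whose coordinate sums vanish separately on each arc, and one checks the relevant $g$-vector fails this---but it should be said. Second, the opposite-orientation refinement of Lemma~\ref{lem:cover} that you flag as the main obstacle is indeed the crux; your observation via Lemma~\ref{lem:facets}, that the two arrows cut off to isolate a submodule arc necessarily have opposite orientation along $b$, is exactly right and in fact shows that the proof of Lemma~\ref{lem:cover} already only ever produces opposite-orientation pairs, so no genuine refinement is needed. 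The non-thin reduction you sketch via ``transversality'' is the vaguest part and would require substantial unpacking to be made precise.
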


\begin{proof}
This follows from Theorem~\ref{thm:limit_nonthin} and the observation that it suffices to consider the strings whose stability spaces have the same codimension as that of the band. This is because Theorem~\ref{thm:limit_nonthin} gives us countably many string modules such that every point in the stability space of the band module is a limit of points in the stability spaces of the string modules. If we did not have infinitely many string modules whose stability space had the same codimension as that of the band, then uncountably many string modules would be needed in order to have every point in the $\D(M(b, \lambda, t))$ approached by points in the stability spaces of string modules. Hence, if the stability space of the band module has codimension 1, then, for a given point in the stability space of the band module, there must be infinitely many string modules whose stability spaces have codimension 1 which approach it.
\end{proof}

We finish by illustrating our results with the following example. Note how the convergence of stability spaces can be understood more clearly by viewing the stability spaces in terms of positive spans of sets of vectors, rather than in terms of inequalities.

\begin{example}\label{ex:convergence}
We consider the path algebra of the quiver \[
\begin{tikzcd}
& 1 \ar[dl,"\alpha"] \ar[dr,"\delta"] & \\
2 \ar[dr,"\beta"] && 4 \ar[dl,"\gamma"] \\
& 3 &
\end{tikzcd}
\] and the band $b = \alpha\beta\overline{\gamma}\overline{\delta}$. We have by Proposition~\ref{prop:stability space of thin modules} that the stability space $\D(M(b, 1, 1))$ is equal to $\pspan{e_{1} - e_{2}, e_{2} - e_{3}, e_{4} - e_{3}, e_{1} - e_{4}}$. The cross-section of this stability space is shown in Figure~\ref{fig:ex:convergence}. It can be seen that this stability space divides into four parts. We shall find a family of strings whose stability spaces converge to each of these four parts. Indeed, let
\begin{align*}
w_{1} &= \overline{\gamma}\overline{\delta}\alpha\beta, & w_{2} &= \overline{\gamma}\overline{\delta}\alpha\beta\overline{\gamma} , \\
w_{3} &= \alpha\beta\overline{\gamma}\overline{\delta}, & w_{4} &= \overline{\delta}\alpha\beta\overline{\gamma}\overline{\delta}.   \\
\end{align*}
The corresponding string modules are 
\begin{align*}
M(w_{1}) &= %
	{\tcs{\phantom{3}\phantom{2}1\phantom{4}\phantom{3}\\%
	\phantom{3}2\phantom{1}4\phantom{3}\\%
	3\phantom{2}\phantom{1}\phantom{4}3}},%
 & M(w_{2}) &= %
 	{\tcs{\phantom{3}\phantom{4}1\phantom{2}\phantom{3}\phantom{4}\\%
 	\phantom{3}4\phantom{1}2\phantom{3}4\\%
 	3\phantom{4}\phantom{1}\phantom{2}3\phantom{4}}}, \\
M(w_{3}) &= %
	{\tcs{1\phantom{2}\phantom{3}\phantom{4}1\\%
	\phantom{1}2\phantom{3}4\phantom{1}\\%
	\phantom{1}\phantom{2}3\phantom{4}\phantom{1}}},%
 & M(w_{4}) &= %
 	{\tcs{\phantom{4}1\phantom{2}\phantom{3}\phantom{4}1\\%
 	4\phantom{1}2\phantom{3}4\phantom{1}\\%
 	\phantom{4}\phantom{1}\phantom{2}3\phantom{4}\phantom{1}}}.
\end{align*}
By Proposition~\ref{prop:stringgenerators_general}, the stability spaces of these modules are
\begin{align*}
\D(M(w_{1})) &= \pspan{e_{1} - e_{4}, e_{1} - e_{2}, e_{2} + e_{4} - e_{3}},\\
\D(M(w_{2})) &= \pspan{e_{1} - e_{2}, e_{4} - e_{3}, e_{1} + e_{2} - e_{3}},\\
\D(M(w_{3})) &= \pspan{e_{2} - e_{3}, e_{4} - e_{3}, e_{1} - e_{2} - e_{4}},\\
\D(M(w_{4})) &= \pspan{e_{1} - e_{4}, e_{1} - e_{2}, e_{1} - e_{2} - e_{3}}.
\end{align*}
In order to define the families of strings converging to the different parts of the stability space of the band, we will need to define a few different rotations of the band $b$, namely
\begin{align*}
b_{1} &= w_{1}, & b_{2} &= b_{1} \\
b_{3} &= w_{3}, & b_{4} &= \overline{\delta}\alpha\beta\overline{\gamma}.
\end{align*}
Our four families of strings are then $b_{1}^{r}w_{1}$, $b_{2}^{r}w_{2}$, $b_{3}^{r}w_{3}$, and $b_{4}^{r}w_{4}$. We can compute the stability spaces of the corresponding string modules using Proposition~\ref{prop:stringgenerators_general}, obtaining
\begin{align*}
\D(M(b_{1}^{r}w_{1})) &= \pspan{e_{1} - e_{4}, e_{1} - e_{2}, \frac{1}{r - 1}(e_{2} + e_{4} - e_{3}) + e_{1} - e_{3}},\\
\D(M(b_{2}^{r}w_{2})) &= \pspan{e_{1} - e_{2}, e_{4} - e_{3}, \frac{1}{r - 1}(e_{1} + e_{2} - e_{3}) + e_{1} - e_{3}},\\
\D(M(b_{3}^{r}w_{3})) &= \pspan{e_{2} - e_{3}, e_{4} - e_{3}, \frac{1}{r - 1}(e_{1} - e_{2} - e_{4}) + e_{1} - e_{3}},\\
\D(M(b_{4}^{r}w_{4})) &= \pspan{e_{1} - e_{4}, e_{1} - e_{2}, \frac{1}{r - 1}(e_{1} - e_{2} - e_{3}) + e_{1} - e_{3}}.
\end{align*}
Therefore, as $r \to \infty$, we have that
\begin{align*}
\D(M(b_{1}^{r}w_{1})) &\to \pspan{e_{1} - e_{4}, e_{1} - e_{2}, e_{1} - e_{3}},\\
\D(M(b_{2}^{r}w_{2})) &\to \pspan{e_{1} - e_{2}, e_{4} - e_{3}, e_{1} - e_{3}},\\
\D(M(b_{3}^{r}w_{3})) &\to \pspan{e_{2} - e_{3}, e_{4} - e_{3}, e_{1} - e_{3}},\\
\D(M(b_{4}^{r}w_{4})) &\to \pspan{e_{1} - e_{4}, e_{1} - e_{2}, e_{1} - e_{3}}.
\end{align*}
And hence for each part of the stability space of the band module we obtain a family of string modules whose stability spaces converge to it.
\begin{figure}[H]
\caption{An illustration of the cross-section of the stability space $\D(M(b, 1, 1))$ from Example~\ref{ex:convergence}}\label{fig:ex:convergence}
\[
\begin{tikzpicture}[scale=2]

\coordinate(c14) at (0,0);
\coordinate(c12) at (0,2);
\coordinate(c43) at (2,2);
\coordinate(c23) at (2,0);

\node(n14) at (c14) {$\bullet$};
	\node [below left = 0.5mm of n14] {$e_{1} - e_{4}$};
\node(n12) at (c12) {$\bullet$};
	\node [above left = 0.5mm of n12] {$e_{1} - e_{2}$};
\node(n43) at (c43) {$\bullet$};
	\node [above right = 0.5mm of n43] {$e_{4} - e_{3}$};
\node(n23) at (c23) {$\bullet$};
	\node [below right = 0.5mm of n23] {$e_{2} - e_{3}$};

\draw (c14) -- (c12);
\draw (c12) -- (c43);
\draw (c43) -- (c23);
\draw (c23) -- (c14);

\draw[dotted] (c12) -- (c23);
\draw[dotted] (c14) -- (c43);

\node(n13) at (1,1) {$\bullet$};
	\node [below = 0.5mm of n13] {$e_{1} - e_{3}$};

\end{tikzpicture}
\]
\end{figure}
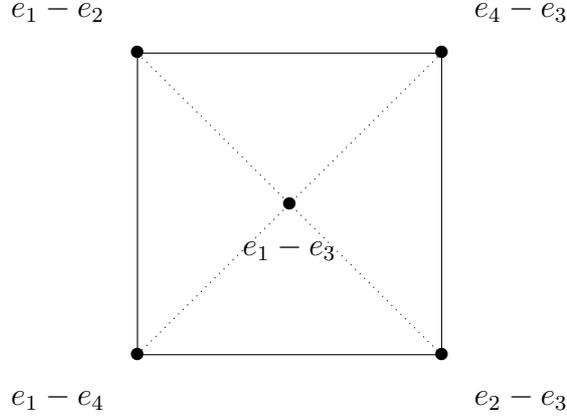

\end{example}

\begin{remark}
Note that, in Example~\ref{ex:convergence}, there also exist the strings
\begin{align*}
w'_{1} &= \beta\overline{\gamma}\overline{\delta}, & w'_{2} &= \overline{\gamma}\overline{\delta}\alpha, \\
w'_{3} &= \overline{\delta}\alpha\beta, & w'_{4} &= \alpha\beta\overline{\gamma}.
\end{align*}
The corresponding string modules are
\begin{align*}
M(w'_{1}) &= %
	{\tcs{\phantom{2}\phantom{3}\phantom{4}1\\%
	2\phantom{3}4\phantom{1}\\%
	\phantom{2}3\phantom{4}\phantom{1}}},%
 & M(w'_{2}) &= %
 	{\tcs{\phantom{2}1\phantom{4}\phantom{3}\\%
 	2\phantom{1}4\phantom{3}\\%
 	\phantom{2}\phantom{1}\phantom{4}3}}, \\
M(w'_{3}) &= %
	{\tcs{\phantom{3}\phantom{2}1\phantom{4}\\%
	\phantom{3}2\phantom{1}4\\%
	3\phantom{2}\phantom{1}\phantom{4}}},%
 & M(w'_{4}) &= %
 	{\tcs{1\phantom{2}\phantom{3}\phantom{4}\\%
 	\phantom{1}2\phantom{3}4\\%
 	\phantom{1}\phantom{2}3\phantom{4}}}.
\end{align*}
It is then easy to compute using Proposition~\ref{prop:stability space of thin modules} that
\begin{align*}
\D(M(w'_{1})) &= \pspan{e_{2} - e_{3}, e_{4} - e_{3}, e_{1} - e_{4}},\\
\D(M(w'_{2})) &= \pspan{e_{1} - e_{2}, e_{1} - e_{4}, e_{4} - e_{3}},\\
\D(M(w'_{3})) &= \pspan{e_{2} - e_{3}, e_{1} - e_{2}, e_{1} - e_{4}},\\
\D(M(w'_{4})) &= \pspan{e_{1} - e_{2}, e_{2} - e_{3}, e_{4} - e_{3}}.
\end{align*}

One can note that these stability spaces then cover $\D(M(b, 1, 1))$. However, this does not have the significance that the convergence in Example~\ref{ex:convergence} has. These strings are exceptional strings, meaning that they lie in tubes, whereas the families of strings in Example~\ref{ex:convergence} are non-exceptional.
\end{remark}

\bibliography{bibliography} 
\bibliographystyle{plain}

\end{document}